\newcommand{\stkout}[1]{\ifmmode\text{\sout{\ensuremath{#1}}}\else\sout{#1}\fi}
\newtheorem{proposition}{Proposition}
\newtheorem{lemma}{Lemma}
\newtheorem{remark}{Remark}
\newtheorem{example}{Example}
\newtheorem{theorem}{Theorem}
\newtheorem{corollary}{Corollary}
\newtheorem{assumption}{Assumption}
\newcommand{\I}{I}
\newcommand{\Ik}{\I_k}
\newcommand{\hI}{\hat{I}}
\newcommand{\hIk}{\hat{I}_k}
\newcommand{\Iell}{\I_\ell}
\newcommand{\Ikell}{\I_{k,\ell}}
\newcommand{\hIkell}{\hat{\I}_{k,\ell}}
\newcommand{\hSigmak}{\hat{\Sigma}[\x,\I_k]}
\newcommand{\hRk}{\hat{\Rbb}[\x,\I_k]}
\newcommand{\G}{\bm{G}}
\newcommand{\A}{\bm{A}}
\newcommand{\B}{\bm{B}}
\newcommand{\bb}{\bm{b}}
\newcommand{\C}{\bm{C}}
\newcommand{\D}{\bm{D}}
\newcommand{\M}{\bm{M}}
\newcommand{\x}{\bm{x}}
\newcommand{\y}{\bm{y}}
\newcommand{\Rbb}{\mathbb{R}}
\newcommand{\N}{\mathbb{N}}
\newcommand{\Sbb}{\mathbb{S}}
\newcommand*\squared[1]{\tikz[baseline=(char.base)]{
		\node[shape=rectangle,draw,inner sep=0pt, minimum size=4mm] (char) {#1};}}
\newcommand{\fullciteay}[1]{\citeauthor{#1} (\citeyear{#1})}
\title{Moment-SOS hierarchies for arrow-type polynomial matrix inequalities with applications to structural optimization}
\author*[1]{\fnm{Marouan} \sur{Handa}}\email{handa@utia.cas.cz}
\author[1,2]{\fnm{Marek} \sur{Tyburec}}\email{marek.tyburec@cvut.cz}
\author[3]{\fnm{Giovanni} \sur{Fantuzzi}}\email{giovanni.fantuzzi@fau.de}
\author[4]{\fnm{Victor} \sur{Magron}}\email{vmagron@laas.fr}
\author[1,5]{\fnm{Michal} \sur{Ko\v{c}vara}}\email{m.kocvara@bham.ac.uk}
\affil*[1]{ \orgname{Institute of Information Theory and Automation}, \orgaddress{\street{Pod Vodárenskou věží 4}, \city{Prague}, \postcode{18208}, \country{Czech Republic}}}
\affil[2]{ \orgname{Faculty of Civil Engineering, Czech Technical University}, \orgaddress{\street{Thákurova 7}, \city{Prague}, \postcode{16629}, \country{Czech Republic}}}
\affil[3]{\orgname{Department of Mathematics, FAU Erlangen-Nürnberg}, \orgaddress{ \city{Erlangen}, \postcode{91058},\country{Germany}}}
\affil[4]{\orgname{LAAS-CNRS}, \orgaddress{\street{7 avenue du colonel Roche}, \city{Toulouse}, \postcode{31400}, \country{France}}}
\affil[5]{\orgname{School of Mathematics, University of Birmingham}, \orgaddress{\street{B15 2TT}, \city{Birmingham}, \postcode{610101},\country{UK}}}
\begin{document}

\abstract{ 
The Arrow Decomposition (AD) technique, initially introduced in \fullciteay{kovcvara2021decomposition}, demonstrated superior scalability over the classical chordal decomposition in the context of Linear Matrix Inequalities (LMIs) if the matrix in question satisfied suitable assumptions. The primary objective of this paper is to extend the AD method to address Polynomial Optimization Problems (POPs) involving large-scale Polynomial Matrix Inequalities (PMIs), with the solution framework relying on moment-sum of square (mSOS) hierarchies. As a first step, we revisit the LMI case and weaken the conditions necessary for the key AD theorem presented in \fullciteay{kovcvara2021decomposition}. This modification allows the method to be applied to a broader range of problems. Next, we propose a practical procedure that reduces the number of additional variables, drawing on physical interpretations often found in structural optimization applications. For the PMI case, we explore two distinct approaches to combine the AD technique with mSOS hierarchies. One approach involves applying AD to the original POP before implementing the mSOS relaxation. The other approach applies AD directly to the mSOS relaxations of the POP. We establish convergence guarantees for both approaches and prove that theoretical properties extend to the polynomial case. Finally, we illustrate the significant computational advantages offered by the application of AD, particularly in the context of structural optimization problems.}

\noindent \keywords{ Arrow decomposition $\cdot$ Polynomial optimization $\cdot$ Polynomial matrix inequalities $\cdot$ Semidefinite programming $\cdot$ Structural optimization}


\pacs[Mathematics Subject Classification (2020)]{74P05 $\cdot$ 90C23 $\cdot$ 90C22 $\cdot$ 65F50}

\maketitle
\section{Introduction}

 Global minimization of multivariate polynomials subject to polynomial inequalities (POP) is a fundamental mathematical optimization problem. While generally NP-hard, POPs can be solved using the moment-sum of squares (mSOS) hierarchy. Relying on representation of positive polynomials on basic semialgebraic sets \cite{putinar1993positive}, the mSOS hierarchy generates increasingly tighter linear semidefinite programming (SDP) relaxations of increasing size, and hence a non-decreasing sequence of lower bounds for POP. Under a mild assumption called the Archimedean assumption \cite{lasserre2001global}, these relaxations converge asymptotically to the global minimum of POP. In addition, the convergence often occurs generically in a finite number of steps \cite{nie2014optimality}. 
 
However, the relaxations can result in large size of SDP matrices and thus the mSOS does not scale well for problems with large number of variables and/or for problems that require high relaxation degrees. To improve the scalability one can exploit sparsity in the monomial coefficients of involved polynomials, which leads to correlative sparsity \cite{lasserre2006convergent}, term sparsity \cite{wang2021tssos,wang2020chordal}, their combination \cite{wang2022cs}, and ideal sparsity \cite{korda2024exploiting}. 
For more general exploitation of symmetry invariance under finite groups, see \cite{riener2013exploiting}. For more details on correlative/term sparsity and their multiple applications, we refer  the interested reader to the recent monograph \cite{magron2023sparse}. 
Convergence rates for hierarchies exploiting correlative sparsity have been obtained in \cite{korda2024convergence}.

 
While POPs with scalar constraints are well studied, many applications require polynomial matrix inequalities (PMIs), i.e., matrices whose entries are multivariate polynomials (see \cite{henrion2005solving,tyburec2021global,bondar2022recovering}).  Assuming that these matrices are dense, analogous convergent hierarchies to the scalar case were developed in \cite{henrion2006convergent} based on the representation of positive definite polynomial matrices \cite{scherer2006matrix}. 
As in the scalar case, scalability can be improved by exploiting the sparsity occurring in the polynomial nature of the PMI. This includes correlative sparsity and term sparsity as shown in the very recent works \cite{miller2024sparse, handaterm}. Contrary to the scalar case, the PMI settings also allows one to exploit structural sparsity of the matrix entries through, e.g., chordal sparsity  \cite{zheng2023sum,zheng2018decomposition}.

As an alternative to chordal structural sparsity, the concept of arrow decomposition (AD) method has recently been developed to speed up the solution of problems subject to linear matrix inequalities (LMIs) with matrices having an arrow structure \cite{kovcvara2021decomposition}. The arrow structure naturally appears in linear elliptic self-adjoint variational problems discretized by finite elements. In these problems, the equilibrium equation can be reformulated either as an LMI or as a PMI involving a positive semidefinite stiffness matrix, which takes an arrow form due to the Schur complement \cite{tyburec2021global}. Rather than requiring chordal sparsity of these LMIs, the AD method requires positive semidefiniteness of the top-left block and bottom-right block of the LMI, making it particularly suitable for such problems by requiring fewer additional variables than the classical chordal decomposition method. Thus, it maintains a better scalability of the decomposed problems with a linear growth in complexity (CPU time vs number of variables), unlike the chordal decomposition wwhich typically scales worse than cubic. In addition, in \cite{kovcvara2021decomposition}, a connection between AD method applied to topology optimization problems and the domain decomposition method was provided. This connection allows one, on the one hand, to physically interpret the additional variables. On the other hand, techniques from domain decomposition can be directly used to formalize the matrix decomposition of the LMI.

\paragraph{Contribution and organization of the manuscript.} So far, the AD method has been restricted to linear matrix inequality (LMI) problems, constrained to cases where both the top-left and bottom-right blocks of the main matrix are positive definite. Furthermore, it has not been applicable to degenerate problems—that is, cases in which at least one of the decomposed matrices is rank-deficient. This paper makes three main contributions. First, we extend arrow decomposition to PMI problems. Second, we provide more general results for the linear case that allow one treatment of degenerate problems. Third, we demonstrate significant computational benefits through applications in structural optimization.

The paper is structured as follows. We begin in Section \ref{sec:background} by introducing all necessary definitions and notation and recalling the moment sum-of-squares method for the PMI case. Section \ref{sec: ADLMI} is dedicated to the AD method in the linear case: We present the AD method and extend the results from \cite{kovcvara2021decomposition} by providing weaker conditions for the applicability of the AD method. This allows for a proper treatment of zero-stiffness design elements in topology optimization problems. Furthermore, we demonstrate that AD typically leads to degenerate problems without an interior point, i.e., violating Slater condition. To fix this, we propose a projection-based method to postprocess resulting SDPs. Remarkably, this post-processing also reduces the number of additional variables and the size of matrix inequalities. 

In Section \ref{sec:ADMSOS}, we present our main results on extending AD theory to the PMI case, combining it with the mSOS method. We first prove that, analogously to the linear case, AD can be applied directly to POP problems ("prior to mSOS”). However, because this approach increases the number of variables in the polynomial basis, it typically does not improve scalability. Its presentation is nonetheless insightful and serves as a bridge to the subsequent approach. To address scalability, we show that AD can also be applied at the relaxation level ("posterior to mSOS”) and establish a corresponding convergence result. In addition, we show that the decomposed relaxations conserve the post-processing results from the linear case. We finally show that AD applied posterior to mSOS can be seen as special instance of AD applied prior to mSOS, allowing to perform AD prior to mSOS without including additional variables in the polynomial basis while maintaining the same theoretical convergence. 

Next, we apply the developed theory to topology optimization problems of frame structures. In particular, in Section \ref{sec:frames}, we present compliance and weight optimization problems in frame structure design and provide, through examples, a physical interpretation of the additional variables in the context of structural engineering as well as of the post-processing procedure. In Section \ref{sec:numerics}, we provide numerical illustrations demonstrating the benefits of AD: reduced solution times and improved numerical accuracy of the SDP relaxation. Moreover, the detailed examples in Sections \ref{sec:frames} and \ref{sec:numerics} aim to make the techniques presented in this paper more accessible to readers from the structural engineering community. Finally, we summarize our contributions and suggest directions for future research in Section \ref{sec:conclusion}.


\section{Notations and background }
\label{sec:background}
In this section, we first introduce all the notations needed for this paper. We further provide the definition of polynomial optimization problems constrained by polynomial matrix inequalities (PMIs), followed by a brief overview of Lasserre's moment sum-of-squares (mSOS) hierarchy. 
\subsection{Notation and definitions}
\label{sec:notation}
All the vectors considered in this paper are row vectors. For a given integer $n$, we denote by $[n]$ the set $\lbrace 1,\cdots, n \rbrace$. 
We denote by $\vert \bm{B} \vert $ the dimension of a given real square matrix $\bm{B}$, i.e. if $\bm{B} \in \mathbb{R}^{n \times n}$ then $\vert \bm{B} \vert =n$. We use the same notation for vector lengths and set cardinalities. 
We denote by $(\bm{B})_{ij}$ the $(i,j)$th component of $\bm{B}$. 
We denote by $\bm{0}_{\mathbb{R}^{n \times m}}$ (resp. $\bm{I}_{\mathbb{R}^{n}}$) the $n \times m$ zero matrix (resp. the identity matrix of size $n$ ). When the dimensions are clear, we simply use $\bm{0}$ (resp. $\bm{I}$). Let $\mathbb{S}^n$ be the space of $n \times n$ symmetric matrices and let $\mathcal{I} \subset [n]$. For $\bm{A} \in \mathbb{S}^n$, we denote by $\bm{A}_{\mathcal{I}}$ the sub-matrix of $\bm{A}$ whose rows and columns are indexed by the elements in $\mathcal{I}$. We denote by $\bm{A}^\dagger$ the Moore-Penrose inverse of $\bm{A}$ \cite[Lemma 14.1]{gallier2011geometric}. Recall that by definition we have $\bm{A}\bm{A}^\dagger\bm{A}=\bm{A}$ and $\bm{A}^\dagger\bm{A}\bm{A}^\dagger=\bm{A}^\dagger$. 
Given $\bm{A}, \bm{B}  \in \mathbb{S}^n$, the notation $\bm{A}\succeq \bm{B}$ (resp.~$\bm{A} \succ \bm{B}$) stands for $\bm{A} - \bm{B}$ being positive semidefinite (resp.~definite). 
Given a vector $\bm{y} \in \Rbb^m$ and matrices $\A_0, \A_1, \dots, \A_m$ a relation of the form $\A_0 + \sum_{i=1}^m y_i \A_i \succeq 0$ is called a \textit{Linear Matrix Inequality} (LMI).

Let $\bm{A}=[a_{ij}]  \in \mathbb{R}^{n \times m}$ and $ \bm{B}=[b_{rs}] \in \mathbb{R}^{p \times q}$. The Kronecker product of $\bm{A}$ and $\bm{B}$, denoted as $\bm{A} \otimes \bm{B}$, is a matrix in $\mathbb{R}^{np \times mq}$, defined element-wise by: $(\bm{A} \otimes \bm{B})_{uv}=a_{ij}b_{rs}$, such that $u=(i-1)p+r$ and $v=(j-1)q+s$, for $i \in [n], j \in [m], r \in [p]$, and $ s \in [q]$. We recall here some key properties of the Kronecker product which will be used in this paper: $\bm{A} \otimes (\bm{B}+\bm{C}) =\bm{A} \otimes \bm{B}+\bm{A}\otimes \bm{C}$, 
$(\bm{A} \otimes \bm{B})\otimes \bm{C}=\bm{A} \otimes (\bm{B}\otimes \bm{C})$, 
$(\bm{A} \otimes \bm{B}) (\bm{C} \otimes \bm{D})=(\bm{AC}) \otimes (\bm{BD})$, $(\bm{A} \otimes \bm{B})^\dagger=\bm{A}^\dagger \otimes \bm{B}^\dagger$, $(\bm{A} \otimes \bm{B})^T=\bm{A}^T \otimes \bm{B}^T $. Additionally, for $\bm{A} \in \mathbb{R}^{n \times m}$ and $\bm{B} \in \mathbb{R}^{p \times q}$, the Kronecker product can be expressed as: $ \bm{A} \otimes \bm{B}=(\bm{I}_n \otimes \bm{B})(\bm{A} \otimes \bm{I}_q)=(\bm{A} \otimes \bm{I}_p)(\bm{I}_m \otimes \bm{B})$. We denote by $\text{Vec}$ the column-wise vectorization operator that transforms a matrix $\bm{A} \in \mathbb{R}^{n \times m}$ into a vector $\text{Vec}(\bm{A}) \in \mathbb{R}^{nm \times 1} $ by stacking the columns $\bm{A}$ on top of each other. We recall the following property of vectorization operator with Kronecker product: $\text{Vec}(\bm{ABC})=(\bm{C}^T\otimes \bm{A})\text{Vec}(\bm{B})$. 

Further,  let $\x \in \Rbb^{n_x}$. A real matrix polynomial $\bm{G}$ in the variable $\bm{x}$ is a matrix function of the form $\bm{G}(\bm{x})= \sum_{\bm{\alpha} \in \mathbb{N}^{n_x}} \bm{G}_{\alpha}\bm{x^\alpha}$, with each $\bm{G}_{\alpha} \in \Rbb^{n \times m}$.
Moreover, we denote by $\Rbb[\x]:= \lbrace \sum_{\bm{\alpha} \in \mathbb{N}^{n_x}} g_{\alpha}\bm{x^\alpha}:  g_{\alpha} \in \Rbb \rbrace $ the set of scalar real valued polynomials, by $\Rbb^{n\times m}[\x]:= \lbrace \sum_{\bm{\alpha} \in \mathbb{N}^{n_x}}\bm{G}_{\alpha}\bm{x^\alpha}:  \bm{G}_{\alpha} \in \Rbb^{n\times m} \rbrace $ the set of polynomial matrices of size $n \times m$, by $\Sbb^{n}[\x]:= \lbrace \sum_{\bm{\alpha} \in \mathbb{N}^{n_x}}\bm{G}_{\alpha}\bm{x^\alpha}:  \bm{G}_{\alpha} \in \Sbb^{n} \rbrace$ the set of symmetric polynomial matrices of size $n$, by $\mathbb{N}^{n_x}_d:=\lbrace \bm{\alpha} \in \mathbb{N}^{n_x}: \sum_i\alpha_i \leq d \rbrace$ the set of $n_x$-tuples of natural numbers for which the sum is bounded by a degree $d$, by $\Rbb[\x]_d:= \lbrace \sum_{\bm{\alpha} \in \mathbb{N}^{n_x}_d} g_{\alpha}\bm{x^\alpha}:  g_{\alpha} \in \Rbb \rbrace $ the set of scalar polynomials of degree at most $d$, by $\Sbb^{n}[\x]_d:= \lbrace \sum_{\bm{\alpha} \in \mathbb{N}^{n_x}_d}\bm{G}_{\alpha}\bm{x^\alpha}:  \bm{G}_{\alpha} \in \Sbb^{n}  \rbrace$, the set of symmetric polynomial matrices of size $n$ and of degree at most $d$. A relation of the form $\bm{G}(\bm{x}) \succeq 0$ where $\bm{G} \in \mathbb{S}^{n}[\bm{x}]$ is called a \textit{Polynomial Matrix Inequality} (PMI).

We say that a sequence $\bm{y}=(y_{\bm{\alpha}})_{\bm{\alpha} \in \mathbb{N}^{n_x}}$ has a representing measure if there exists a finite Borel measure $\mu$ supported on $\mathcal{K} \subset \mathbb{R}^{n_x}$ such that $y_{\bm{\alpha}}=\int_{\mathcal{K}}\bm{x^\alpha}d \mu$ for all $\bm{\alpha} \in \mathbb{N}^{n_x}$. In this case we say that $\bm{y}$ is a sequence of moments. For $\bm{x} \in \mathbb{R}^{n_x}$, we denote by $\bm{b}_d(\bm{x})$ the standard monomial basis of the vector space $\mathbb{R}[\bm{x}]_d$, i.e.
$$ \bm{b}_d(\bm{x})=(\bm{x}^{\bm{\alpha}})_{\bm{\alpha} \in \mathbb{N}_{d}^{n_x}}=\begin{pmatrix}  1& x_1 & x_2 & \cdots & x_1^2 & x_1x_2 & \cdots & x_1^d & \cdots & x_n^d \end{pmatrix}.$$
Let $\bm{y}=(y_{\bm{\alpha}})_{\bm{\alpha} \in \mathbb{N}^{n_x}}$, $p(\bm{x})=\sum_{\bm{\alpha} \in \mathbb{N}^{n_x}}p_{\bm{\alpha}}\bm{x^\alpha} \in \mathbb{R}[\bm{x}]$ and $ \bm{G}(\bm{x})=\sum_{\bm{\alpha} \in \mathbb{N}^{n_x}}\bm{G}_{\bm{\alpha}}\bm{x^\alpha} \in \mathbb{S}[\bm{x}]$. 
Let us further define the linear operator $\mathscr{L}_{\bm{y}}(p):=\sum_{\bm{\alpha} \in \mathbb{N}^{n_x}}p_{\bm{\alpha}}y_{\bm{\alpha}}$. In general, for any $\bm{P} \in \mathbb{R}^{n \times m}[\bm{x}]$, $ \mathscr{L}_{\bm{y}}(\bm{P}(\x)) $ is applied entry-wise on the matrix $\bm{P}(\x)$. Moreover, we define
\begin{itemize}
    \item the $d-$th order \textit{pseudo-moment matrix} associated with $\bm{y}$ by
 $$ \bm{M}_d(\bm{y})=\mathscr{L}_{\bm{y}} \left( \bm{b}_d(\bm{x})^T\bm{b}_d(\bm{x})  \right)=\left(y_{\bm{\alpha} +\bm{\beta}} \right) _{\bm{\alpha}, \bm{\beta}};$$
    \item the $d-$th order \textit{localizing matrix} \footnote{Notice that $\bm{M}_d(\bm{G}\bm{y})$ is a block matrix.} associated with $\bm{y}$ and $\bm{G}$ by $$\bm{M}_d(\bm{G}\bm{y})=\mathscr{L}_{\bm{y}} \left( \bm{G}(\bm{x}) \otimes \bm{b}_{d}(\bm{x})^T\bm{b}_{d}(\bm{x}) \right)=\left(\sum_{\bm{\gamma} \in \mathbb{N}^n_s} y_{\bm{\alpha} +\bm{\beta}+\bm{\gamma}}  \bm{G}_{\bm{\gamma}} \right) _{\bm{\alpha}, \bm{\beta} }.$$
\end{itemize}

\subsection{The moment sum-of-squares hierarchy }
\label{sec:mSOS}


We consider the following polynomial optimization problem (POP)
\begin{equation}
\tag{POP} p^*=\underset{\substack{\x \in \mathcal{K}} }{\min}~ p(\x),      
  \label{POP-PMI}  
\end{equation}
where $p \in \mathbb{R}[\bm{x}]_d$, $\bm{G} \in \mathbb{S}^{s}[\bm{x}]_{d_{G}}$ and $\mathcal{K}:=\lbrace \bm{x} \in \mathbb{R}^{n_x}: \bm{G(x)} \succeq 0\rbrace$ is a compact set. Notice that this setting does not limit itself to a single constraint: a collection of constraints $\bm{G}_j(\bm{x}) \succeq 0, j \in [m]$, can be concatenated into a single constraint $\bm{G}(\bm{x}) \succeq 0$ by forming a block diagonal matrix $\bm{G}(\bm{x}) = \text{diag}(\bm{G}_1(\bm{x}),\ldots,\bm{G}_m(\bm{x}))$. On the other hand, for $s=1$, $\bm{G(x)} \geq 0$ constitutes a polynomial scalar inequality. Finally, here we consider the matrix $\bm{G}$ in a general context, without assuming any specific arrow structure. The combination of mSOS with arrow type matrices will be discussed in Section \ref{sec:ADMSOS}.

Let $\mathcal{M}(\mathcal{K})$ be the space of finite signed Borel measures on $\mathcal{K}$, and let $\mathcal{M}(\mathcal{K})_+ \subset \mathcal{M}(\mathcal{K})$ denote the convex cone of nonnegative finite Borel measures on $\mathcal{K}$. If $p^*$ is a global minimum to \eqref{POP-PMI}, then
\begin{equation}
\begin{aligned}
  p^*=&\underset{\substack{\mu \in \mathcal{M}(\mathcal{K})_+} }{\inf} \int _\mathcal{K}p d \mu,&  \\
 \text{s.t. }&\mu(\mathcal{K})=1.
\end{aligned}
\label{generalised-moment-pblm}
\end{equation}
One can write \eqref{generalised-moment-pblm} in terms of the sequence $\bm{y}$ as 
\begin{equation}
\begin{aligned}
  p^*=\ &\underset{\substack{\y}}{\inf} \ \mathscr{L}_{\bm{y}}(p)&  \\
 \text{s.t. }&\bm{y}_0=1,& \\
 &\bm{y} \text{ has a representing measure } \mu \text{ on } \mathcal{K}.
\end{aligned}
\label{generalised-moment-pblm-Vectors}
\end{equation}

In practice, one can only solve finite dimensional truncations of \eqref{generalised-moment-pblm-Vectors}. To show convergence of a sequence of truncations, let us first recall that a matrix $\bm{G} \in \mathbb{S}^s[\bm{x}] $ is sum-of-squares (SOS) if there exists $\bm{H} \in \mathbb{R}^{s \times \ell}[\bm{x}]$ such that $ \bm{G}(\bm{x})= \bm{H}(\bm{x}) \bm{H}(\bm{x})^T.$ Further, let us assume the following algebraic compactness condition:

\begin{assumption}[Archimedean condition]\label{assumption-archimedean}
There exist an SOS polynomial $p_0 : \mathbb{R}^{n_x}  \rightarrow \mathbb{R}$ and a matrix SOS polynomial $\bm{R}: \mathbb{R}^{n_x}  \rightarrow \mathbb{S}^N$ such that the set $\big\lbrace \bm{x} \in \mathbb{R}^n : p_0(\bm{x})+  \left\langle \bm{R}(\bm{x}), \bm{G}(\bm{x}) \right\rangle \geq 0 \big\rbrace$ is compact.
\end{assumption}

For this setting, the following convergence result then holds:
\begin{theorem}[Dual facet of Putinar's Positivstellensatz \texorpdfstring{\citep[Theorem~2.44]{lasserre2015introduction}}{}]
 If Assumption \ref{assumption-archimedean} holds, then the sequence $\bm{y}=(y_{\alpha})_{\bm{\alpha \in \mathbb{N}^{n_x}}}$ has a representing measure $\mu$ supported on $\mathcal{K}$ if and only if 
 \begin{equation}
     \bm{M}_d(\bm{y}) \succeq 0 \text{ and } \bm{M}_d(\bm{G}\bm{y}) \succeq 0, \forall d \in \mathbb{N}.
     \label{moment-localizing-matrix}
 \end{equation}
 \label{putinar}
\end{theorem}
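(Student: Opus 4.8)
The plan is to prove the two implications separately: the forward (necessity) direction is elementary, while the converse rests on the matrix-valued Putinar representation combined with the Archimedean hypothesis. For the ``only if'' direction, suppose $\y$ has a representing measure $\mu$ on $\mathcal{K}$. Then by the definitions of the moment and localizing matrices, $\M_d(\y)=\mathscr{L}_{\y}(\bb_d(\x)^T\bb_d(\x))=\int_{\mathcal{K}}\bb_d(\x)^T\bb_d(\x)\,d\mu$, so for any vector $\bm{v}$ one has $\bm{v}^T\M_d(\y)\bm{v}=\int_{\mathcal{K}}(\bb_d(\x)\bm{v})^2\,d\mu\ge0$ because $\mu$ is nonnegative; hence $\M_d(\y)\succeq0$. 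Similarly $\M_d(\G\y)=\int_{\mathcal{K}}\G(\x)\otimes(\bb_d(\x)^T\bb_d(\x))\,d\mu$, where on $\mathcal{K}$ the integrand is a Kronecker product of the two positive semidefinite matrices $\G(\x)$ and $\bb_d(\x)^T\bb_d(\x)$, hence itself positive semidefinite, and integration against the nonnegative measure $\mu$ preserves this. Thus both families are positive semidefinite for every $d$.

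For the converse, I would read the hypotheses as positivity statements for the Riesz functional $\mathscr{L}_{\y}$ on $\Rbb[\x]$. The condition $\M_d(\y)\succeq0$ for all $d$ is equivalent to $\mathscr{L}_{\y}(\sigma)\ge0$ for every sum-of-squares polynomial $\sigma$, while $\M_d(\G\y)\succeq0$ for all $d$ is equivalent, after expanding the quadratic form through the Kronecker/vectorization structure recalled in Section~\ref{sec:notation}, to $\mathscr{L}_{\y}(\bm{h}^T\G\bm{h})\ge0$ for every polynomial vector $\bm{h}\in\Rbb^{s}[\x]$. Together these show that $\mathscr{L}_{\y}$ is nonnegative on the matrix quadratic module $M(\G)=\{\sigma_0+\langle\S,\G\rangle\}$ generated by $\G$, where $\sigma_0$ ranges over SOS polynomials and $\S$ over matrix SOS polynomials.

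The crux is then to upgrade nonnegativity on $M(\G)$ to nonnegativity on the full cone of polynomials nonnegative on $\mathcal{K}$, which is where Assumption~\ref{assumption-archimedean} enters: under the Archimedean condition, the matrix version of Putinar's Positivstellensatz \cite{scherer2006matrix,henrion2006convergent} guarantees that every polynomial strictly positive on $\mathcal{K}$ lies in $M(\G)$. Consequently, for any $f\in\Rbb[\x]$ with $f\ge0$ on $\mathcal{K}$ and any $\varepsilon>0$, the polynomial $f+\varepsilon$ is strictly positive on $\mathcal{K}$, so $f+\varepsilon\in M(\G)$ and $\mathscr{L}_{\y}(f)+\varepsilon\,y_{\bm{0}}\ge0$; letting $\varepsilon\to0$ and using $y_{\bm{0}}=\mathscr{L}_{\y}(1)\ge0$ gives $\mathscr{L}_{\y}(f)\ge0$. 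Since $\mathcal{K}$ is closed, Haviland's theorem \cite{lasserre2015introduction} then supplies a representing Borel measure $\mu$ supported on $\mathcal{K}$, finite because $\mathcal{K}$ is compact. I expect the main obstacle to be precisely this representation step, i.e. justifying $f+\varepsilon\in M(\G)$: this is the nontrivial matrix Positivstellensatz, whose proof uses the Archimedean property to produce uniform bounds and to close the quadratic module. The remaining ingredients---the equivalence between semidefiniteness of the moment/localizing matrices and functional positivity, and the passage from functional positivity to a measure via Haviland---are comparatively routine once this representation is available.
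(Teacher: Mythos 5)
The paper does not prove Theorem~\ref{putinar}: it is imported verbatim as \citep[Theorem~2.44]{lasserre2015introduction}, so there is no in-paper argument to compare against. Your proposal reproduces the standard proof of that cited result correctly: the necessity direction by integrating the positive semidefinite integrands $\bm{b}_d(\bm{x})^T\bm{b}_d(\bm{x})$ and $\bm{G}(\bm{x})\otimes\bm{b}_d(\bm{x})^T\bm{b}_d(\bm{x})$ against $\mu$, and the sufficiency direction by translating \eqref{moment-localizing-matrix} into nonnegativity of $\mathscr{L}_{\bm{y}}$ on the matrix quadratic module, invoking the Archimedean matrix Positivstellensatz \cite{scherer2006matrix,henrion2006convergent} to cover all polynomials strictly positive on $\mathcal{K}$, and concluding via Haviland's theorem on the closed (indeed compact) set $\mathcal{K}$. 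The only cosmetic remark is that the appeal to $y_{\bm{0}}\ge 0$ in the limit $\varepsilon\to 0$ is superfluous (finiteness of $y_{\bm{0}}$ suffices), and that the heavy lifting is, as you acknowledge, delegated to the cited Positivstellensatz exactly as the literature does.
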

Hence, we obtain the following equivalent infinite-dimensional linear program 
\begin{equation}
\begin{aligned}
  p^*= \ &\underset{\substack{\y}}{\inf} \ \mathscr{L}_{\bm{y}}(p)&  \\
 \text{s.t. }&\bm{y}_0=1,& \\
 &\bm{M}_d(\bm{y}) \succeq 0, \forall d \in \mathbb{N}, & \\
 &\bm{M}_d(\bm{G}\bm{y}) \succeq 0, \forall d \in \mathbb{N}.
\end{aligned}
\label{mSOS-infinite}
\end{equation}
Let $d_p=\text{deg}(p)$, $d_G=\text{deg}(\bm{G})$, $ r_G=\lceil \frac{d_G}{2}  \rceil$ and $r \geq r_{\text{min}}:=\max (r_G, \lceil \frac{d_p}{2}  \rceil )$. The hierarchy of its finite-dimensional truncations then reads as
\begin{equation}
\tag{mSOS} \begin{aligned}
  p_r=&\underset{\substack{\y \in \mathbb{R}^{\vert \bm{b_{2r}}(\bm{x}) \vert}}}{\inf} \mathscr{L}_{\bm{y}}(p)&  \\
 \text{s.t. }&\bm{y}_0=1,& \\
 &\bm{M}_r(y) \succeq 0, & \\
 &\bm{M}_{r-r_{G}}(\bm{G}y) \succeq 0
\end{aligned}
\label{mSOS-finite}
\end{equation}
and is controlled by the relaxation order $r \in \mathbb{N}$. The problem \eqref{mSOS-finite} is called Lasserre moment sum-of-squares hierarchy. By solving \eqref{mSOS-finite} for a sequence of increasing relaxation orders, we obtain a non-decreasing sequence of lower bounds to \eqref{POP-PMI}. 

\begin{theorem}[\texorpdfstring{\citep[Theorem~2.2]{henrion2006convergent}}{}]
 Under Assumption \ref{assumption-archimedean}, we have $p_r \leq p_{r+1} \leq \cdots \leq p^*$ and $\lim_{r \rightarrow \infty}p_r =p^*$ 
 \label{mSOS-convergence}
\end{theorem}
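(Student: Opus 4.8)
The plan is to prove the two assertions separately: the monotone chain $p_r \le p_{r+1} \le \cdots \le p^*$ first, which is elementary, and then the limit $\lim_{r \to \infty} p_r = p^*$, which is the substantive part and rests on Theorem \ref{putinar} together with a weak-$*$ compactness argument powered by Assumption \ref{assumption-archimedean}.

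For monotonicity and the upper bound, I would argue as follows. Fix $r \ge r_{\min}$ and let $\bm{y}$ be feasible for \eqref{mSOS-finite} at order $r+1$. Since $\bm{M}_r(\bm{y})$ is a principal submatrix of $\bm{M}_{r+1}(\bm{y})$ and $\bm{M}_{r-r_G}(\bm{G}\bm{y})$ is a principal submatrix of $\bm{M}_{r+1-r_G}(\bm{G}\bm{y})$, the truncation of $\bm{y}$ to moments of degree at most $2r$ is feasible at order $r$; as the objective $\mathscr{L}_{\bm{y}}(p)$ depends only on moments of degree at most $d_p \le 2r$, minimizing over the (effectively smaller) order-$(r+1)$ feasible set cannot decrease the optimum, giving $p_r \le p_{r+1}$. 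For the bound $p_r \le p^*$, take any $\bm{x} \in \mathcal{K}$ and the moment sequence of the Dirac measure $\delta_{\bm{x}}$, i.e. $y_{\bm{\alpha}} = \bm{x}^{\bm{\alpha}}$. Then $\bm{M}_r(\bm{y}) = \bm{b}_r(\bm{x})^T \bm{b}_r(\bm{x}) \succeq 0$ and $\bm{M}_{r-r_G}(\bm{G}\bm{y}) = \bm{G}(\bm{x}) \otimes \big(\bm{b}_{r-r_G}(\bm{x})^T \bm{b}_{r-r_G}(\bm{x})\big) \succeq 0$, the latter because $\bm{G}(\bm{x}) \succeq 0$ and the Kronecker product of positive semidefinite matrices is positive semidefinite; this point is feasible with value $p(\bm{x})$, so $p_r \le p(\bm{x})$ for every $\bm{x} \in \mathcal{K}$, whence $p_r \le p^*$.

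For convergence, let $\bar p := \lim_{r \to \infty} p_r \le p^*$ and, for each $r$, let $\bm{y}^{(r)}$ be an optimal (or near-optimal) solution of \eqref{mSOS-finite}. The key step is to show that $\bm{y}^{(r)}$ admits uniform bounds: for each fixed multi-index $\bm{\alpha}$, $\sup_r |y^{(r)}_{\bm{\alpha}}| < \infty$. This is exactly where Assumption \ref{assumption-archimedean} enters; the compactness of $\{\bm{x} : p_0(\bm{x}) + \langle \bm{R}(\bm{x}), \bm{G}(\bm{x})\rangle \ge 0\}$ yields, by a standard argument, bounds of the form $\mathscr{L}_{\bm{y}^{(r)}}\big((\textstyle\sum_i x_i^2)^k\big) \le c_k$, and positive semidefiniteness of $\bm{M}_r(\bm{y}^{(r)})$ then controls the individual moments through $|y_{\bm{\alpha}+\bm{\beta}}| \le (y_{2\bm{\alpha}} y_{2\bm{\beta}})^{1/2}$. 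Given these bounds, a diagonal extraction produces a subsequence along which every coordinate $y^{(r)}_{\bm{\alpha}}$ converges to a limit $y^*_{\bm{\alpha}}$. For any fixed $d$ and all $r$ large enough, $\bm{M}_d(\bm{y}^{(r)}) \succeq 0$ and $\bm{M}_d(\bm{G}\bm{y}^{(r)}) \succeq 0$ (being principal submatrices of the feasibility constraints at order $r$), and since positive semidefiniteness of a fixed-size matrix is preserved under entrywise limits, the limit sequence $\bm{y}^*$ satisfies $\bm{M}_d(\bm{y}^*) \succeq 0$ and $\bm{M}_d(\bm{G}\bm{y}^*) \succeq 0$ for every $d \in \mathbb{N}$, with $y^*_0 = 1$.

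Finally, I would invoke Theorem \ref{putinar}: these conditions guarantee that $\bm{y}^*$ has a representing measure $\mu^*$ supported on $\mathcal{K}$, and $y^*_0 = 1$ forces $\mu^*(\mathcal{K}) = 1$, so $\mu^*$ is feasible for \eqref{generalised-moment-pblm-Vectors} and hence $\mathscr{L}_{\bm{y}^*}(p) = \int_{\mathcal{K}} p\, d\mu^* \ge p^*$. On the other hand, because $p$ has finite degree, $\mathscr{L}_{\bm{y}^*}(p)$ is a finite linear combination of the $y^*_{\bm{\alpha}}$, so $\mathscr{L}_{\bm{y}^*}(p) = \lim_{r \to \infty} \mathscr{L}_{\bm{y}^{(r)}}(p) = \bar p$ along the chosen subsequence. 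Thus $\bar p \ge p^*$, and combined with $\bar p \le p^*$ this yields $\lim_{r \to \infty} p_r = p^*$. The main obstacle is the uniform moment bound: making the passage from the Archimedean certificate in Assumption \ref{assumption-archimedean} to explicit bounds on $\mathscr{L}_{\bm{y}^{(r)}}\big((\sum_i x_i^2)^k\big)$ rigorous in the matrix-valued setting is the only genuinely delicate point, the remainder being routine compactness and the appeal to Theorem \ref{putinar}.
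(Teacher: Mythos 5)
Your proposal is correct and reconstructs the standard convergence argument; the paper itself offers no proof of this theorem, citing it directly from Henrion and Lasserre (2006, Theorem~2.2), and your outline matches the proof given there (monotonicity by truncation, feasibility of Dirac moment vectors, uniform moment bounds from the Archimedean certificate, diagonal extraction, and the dual Putinar theorem to produce a representing measure). The one step you flag as delicate --- passing from Assumption~\ref{assumption-archimedean} to bounds on $\mathscr{L}_{\bm{y}^{(r)}}\bigl((\sum_i x_i^2)^k\bigr)$ --- is handled in the cited reference by applying the scalar Putinar Positivstellens\"atze to the single polynomial $p_0 + \langle \bm{R}, \bm{G}\rangle$ whose superlevel set is compact, which places $N - \sum_i x_i^2$ in the associated quadratic module and yields the required bounds on the diagonal moments.
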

As stated in \cite{henrion2006convergent}, Assumption \ref{assumption-archimedean} is not very restrictive: it suffices that one of the constraints of problem \eqref{POP-PMI} is of the form $\rho^2-\rVert \bm{x} \rVert^2 \geq 0$ with a suitable $\rho \in \mathbb{R}$. If this is not the case, it can be added as a redundant constraint by choosing $\rho$ so that it does not affect the solution of the problem \eqref{POP-PMI}. 
Finite convergence occurs when the rank of the pseudo-moment matrix stabilizes \cite{curto1996solution}, i.e.,
\begin{equation}
 \text{Rank}(\bm{M}_r(\bm{y}^*))= \text{Rank}(\bm{M}_{r-r_{G}}(\bm{y}^*)),
\label{flatness}
\end{equation}
where $\bm{y}^*$ is a solution to \eqref{mSOS-finite} at a fixed $r$. 
Based on this flatness condition, one can extract from \eqref{mSOS-finite} $\tau=\text{Rank}(\bm{M}_r(\bm{y}^*))$ distinct global minimizers of the problem \eqref{POP-PMI} using a linear algebra routine detailed in \cite{henrion2005detecting}. Depending on the nature of the problem, one can also have other types of conditions for finite convergence, see for instance \cite{henrion2023occupation}, or \cite{tyburec2021global,tyburec2022global,tyburec2024globalweightoptimizationframe} for conditions developed particularly for topology optimization problems.

\section{The arrow decomposition method}
\label{sec: ADLMI}

We start this section by recalling the notion of a so-called arrow-type matrix
\begin{equation}
    \mathbf{G} = \begin{bmatrix}\A & \B \\ \B^T& \bm{\Gamma}\end{bmatrix},
\end{equation}
in which $\A \in \mathbb{S}^n$, $\bm{\Gamma} \in \mathbb{S}^m$, and $\B \in \mathbb{R}^{n\times m}$ and where 
\begin{equation}
\A=\sum_{k=1}^p\A_k \text{ and } \A_k \in \Sbb^n, \forall k \in [p].
    \label{definition-arrow-A}
\end{equation}
Further, let $\mathcal{I}:=[n]$ denote the row/column index set of $\A$ and let $(\mathcal{I}_k)_{k \in [p]}$ be a partition of the set $\mathcal{I}$, where $\mathcal{I}_k$ corresponds to the index set of the non-zero rows and columns of $\bm{A}_k$. For example, if 
$$\bm{A} = \bm{A}_1 + \bm{A}_2 =\begin{bmatrix}  2 & 1 & 0 \\ 1 & 1 & 0 \\ 0 & 0 & 0\end{bmatrix}+\begin{bmatrix}  0 & 0 & 0 \\ 0 & 1 & 1 \\ 0 & 1 & 2\end{bmatrix},$$
we have $\mathcal{I}=\lbrace 1, 2, 3 \rbrace$, $\mathcal{I}_1=\lbrace 1, 2 \rbrace$, and $\mathcal{I}_2=\lbrace 2, 3 \rbrace$.

Now, for each subset $\mathcal{I}_k$, we define a matrix $\bm{\mathscr{E}}_{\mathcal{I}_k} \in \mathbb{R}^{ \vert \mathcal{I}_k \vert \times n}$ as $$(\bm{\mathscr{E}}_{\mathcal{I}_k})_{ij}=\left\lbrace\begin{aligned}
  &1, \text{ if  } \mathcal{I}_k(i)=j, \\ 
  &0, \text{ otherwise},
  \end{aligned}\right.
 $$ 
 where the notation $\mathcal{I}_k(i)$ stands for the $i$-th element of $\mathcal{I}_k$. This definition allows a compact representation of a sparse matrix. For example, for the matrix $\bm{A}_1$ defined above, we have $ \bm{\mathscr{E}}_{\mathcal{I}_1}=\begin{bmatrix}
     1 & 0 & 0 \\ 0 & 1 & 0 
 \end{bmatrix}$ and $\bm{A}_1=\bm{\mathscr{E}}_{\mathcal{I}_1}^T \begin{bmatrix}
     2 & 1 \\ 1 & 1
 \end{bmatrix} \bm{\mathscr{E}}_{\mathcal{I}_1}$.
 
 In what follows, we denote the intersection of the sets $\mathcal{I}_k$ and $\mathcal{I}_\ell$ by $\mathcal{I}_{k,\ell}=\mathcal{I}_k \cap \mathcal{I}_{\ell}$ with $k, \ell \in [p]$ and $k < \ell$. 

Using the index sets $\mathcal{I}_k$ related to the matrices $\mathbf{A}_k$, let us now decompose a rectangular matrix $\B \in \Rbb^{n \times m}$ as
\begin{equation}
\B=\sum_{k=1}^p\B_k \text{ and } \B_k \in \Rbb^{n\times m}, \forall k \in [p],
    \label{definition-arrow-B}
\end{equation} such that $(\bm{B}_k)_{ij}=0 \text{ for } i \notin \mathcal{I}_k$. 

Then, the so-called \textit{arrow type} matrix $ \G \in \Sbb^{n+m} $ reads as
\begin{equation}
\G=  \begin{bmatrix}\A & \B \\ \B^T& \bm{\Gamma}\end{bmatrix} = 
\sum_{i=1}^p\G_k+ \begin{bmatrix}
    \bm{0} & \bm{0} \\
    \bm{0} & \bm{\Gamma}
\end{bmatrix}
\text{ where }
\G_k=\begin{bmatrix}
    \A_k & \B_k \\
    \B_k^T & \bm{0}
\end{bmatrix}, k \in [p]
    \label{definition-arrow-matrix}
\end{equation}
with $\bm{\Gamma} \in \mathbb{S}^m$. 
By definition, we have 
\begin{equation}
    (\bm{G}_k)_{ij}=0 \text{ for } (i,j) \notin \mathcal{I}_k \cup \lbrace n+1,\cdots, n+m\rbrace, k \in [p].
    \label{definition-arrow-matrix-sparsity}
\end{equation}

In the following, we aim to replace the condition $\bm{G} \succeq 0$ with a set of LMIs $\bm{G}_k \succeq 0$ where each $\bm{G}_k$ is potentially of reduced size (see Section \ref{computational-consideration-lmi}). In general, the matrix $\G$, even when endowed with the arrow structure described above, can only be decomposed using classical chordal sparsity graph techniques (see \cite{agler1988positive,andersen2015chordal}). To enable the arrow decomposition technique, we introduce the following assumption on the matrices $\bm{A}_k$ and $\bm{\Gamma}$:
\begin{assumption}[Positive semidefiniteness\label{Assumption-4}]
$\A_k \succeq 0, \forall k \in [p]$ and $\bm{\Gamma} \succeq 0$.
\end{assumption}

Before stating the result allowing for an arrow decomposition of the matrix $\G$, let us first recall the generalized Schur complement for positive semidefinite (PSD) matrices. 
\begin{lemma}[\cite{gallier2011geometric}, Theorem 16.1]
   Let $\bm{A} \in \mathbb{S}^n$, $\bm{\Gamma} \in \mathbb{S}^m$, and $\bm{B}\in \mathbb{R}^{n\times m}$. Then, the following conditions are equivalent: \begin{itemize}
       \item $\begin{bmatrix} \bm{A} & \bm{B} \\ \bm{B}^T & \bm{\Gamma}\end{bmatrix} \succeq 0$,
       \item $\bm{A} \succeq \bm{0}$, $\bm{\Gamma}-\bm{B}^T\bm{A}^\dagger\bm{B} \succeq0, \bm{A}\bm{A}^\dagger\bm{B}=\bm{B} $.
   \end{itemize}
   \label{shur-complement}
\end{lemma}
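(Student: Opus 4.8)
The plan is to characterize the positive semidefiniteness of the arrow block $\bm{M}=\begin{bmatrix}\bm{A}&\bm{B}\\\bm{B}^T&\bm{\Gamma}\end{bmatrix}$ through a symmetric block factorization built from the Moore--Penrose inverse $\bm{A}^\dagger$. The central object is the projector $\bm{P}:=\bm{A}\bm{A}^\dagger$: since $\bm{A}$ is symmetric, $\bm{A}^\dagger$ is symmetric as well, so $\bm{P}=\bm{A}^\dagger\bm{A}=\bm{P}^T=\bm{P}^2$ is exactly the orthogonal projection onto $\mathrm{range}(\bm{A})$. Consequently the condition $\bm{A}\bm{A}^\dagger\bm{B}=\bm{B}$ is equivalent to $\mathrm{range}(\bm{B})\subseteq\mathrm{range}(\bm{A})$, and it is precisely this range condition that makes the factorization below exact.

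The key algebraic identity I would establish is
\begin{equation}
\bm{M}=\bm{L}\begin{bmatrix}\bm{A}&\bm{0}\\\bm{0}&\bm{\Gamma}-\bm{B}^T\bm{A}^\dagger\bm{B}\end{bmatrix}\bm{L}^T,\qquad \bm{L}:=\begin{bmatrix}\bm{I}&\bm{0}\\\bm{B}^T\bm{A}^\dagger&\bm{I}\end{bmatrix},
\end{equation}
valid whenever $\bm{A}\bm{A}^\dagger\bm{B}=\bm{B}$ holds. Verifying it is a direct multiplication, using that $\bm{L}^T=\begin{bmatrix}\bm{I}&\bm{A}^\dagger\bm{B}\\\bm{0}&\bm{I}\end{bmatrix}$ by symmetry of $\bm{A}^\dagger$: the off-diagonal blocks reduce to $\bm{A}\bm{A}^\dagger\bm{B}=\bm{B}$ and its transpose $\bm{B}^T\bm{A}^\dagger\bm{A}=\bm{B}^T$, both consequences of $\bm{P}\bm{B}=\bm{B}$, while the $(2,2)$ block collapses to $\bm{B}^T\bm{A}^\dagger\bm{A}\bm{A}^\dagger\bm{B}+(\bm{\Gamma}-\bm{B}^T\bm{A}^\dagger\bm{B})=\bm{\Gamma}$ using $\bm{A}^\dagger\bm{A}\bm{A}^\dagger=\bm{A}^\dagger$. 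Since $\bm{L}$ is block lower triangular with identity diagonal blocks it is invertible, so congruence yields $\bm{M}\succeq0$ if and only if the block-diagonal middle factor is positive semidefinite, that is $\bm{A}\succeq\bm{0}$ and $\bm{\Gamma}-\bm{B}^T\bm{A}^\dagger\bm{B}\succeq\bm{0}$. This settles the $(\Leftarrow)$ direction, where all three conditions are assumed.

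For the $(\Rightarrow)$ direction I would first extract $\bm{A}\succeq\bm{0}$ by restricting the quadratic form to test vectors $\bm{w}=\begin{pmatrix}\bm{u}\\\bm{0}\end{pmatrix}$, giving $\bm{w}^T\bm{M}\bm{w}=\bm{u}^T\bm{A}\bm{u}\ge0$. The nontrivial step — and the one I expect to be the main obstacle — is deriving the range condition \emph{before} the factorization is available. I would argue with a vector $\bm{u}\in\ker\bm{A}$ and the test vector $\bm{w}=\begin{pmatrix}t\bm{u}\\\bm{v}\end{pmatrix}$: then $\bm{u}^T\bm{A}\bm{u}=0$ and the quadratic form reduces to $\bm{w}^T\bm{M}\bm{w}=2t\,\bm{u}^T\bm{B}\bm{v}+\bm{v}^T\bm{\Gamma}\bm{v}$; if $\bm{u}^T\bm{B}\bm{v}\neq0$ for some $\bm{v}$, letting $t\to\pm\infty$ drives this below zero, contradicting $\bm{M}\succeq\bm{0}$. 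Hence $\bm{B}^T\bm{u}=\bm{0}$ for every $\bm{u}\in\ker\bm{A}$, i.e. $\mathrm{range}(\bm{B})\subseteq(\ker\bm{A})^\perp=\mathrm{range}(\bm{A})$, which is exactly $\bm{A}\bm{A}^\dagger\bm{B}=\bm{B}$. With the range condition secured, the factorization $\bm{M}=\bm{L}\,\mathrm{diag}(\bm{A},\,\bm{\Gamma}-\bm{B}^T\bm{A}^\dagger\bm{B})\,\bm{L}^T$ holds identically, and congruence again forces the middle factor to be positive semidefinite, yielding $\bm{\Gamma}-\bm{B}^T\bm{A}^\dagger\bm{B}\succeq\bm{0}$ and completing the proof.
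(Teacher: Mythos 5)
Your proof is correct. The paper does not actually prove this lemma -- it is quoted directly from the cited reference (Gallier, Theorem 16.1) -- and your argument is the standard one for the generalized Schur complement: the congruence factorization $\bm{M}=\bm{L}\,\mathrm{diag}(\bm{A},\bm{\Gamma}-\bm{B}^T\bm{A}^\dagger\bm{B})\,\bm{L}^T$ valid under the range condition $\bm{A}\bm{A}^\dagger\bm{B}=\bm{B}$, together with the limiting test-vector argument on $\ker\bm{A}$ to extract that range condition in the forward direction. All steps check out, including the use of the symmetry of $\bm{A}^\dagger$ and the identification of $\bm{A}\bm{A}^\dagger$ with the orthogonal projector onto $\mathrm{range}(\bm{A})$.
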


The following theorem ensures that we can replace an LMI involving arrow type matrices by smaller LMIs at the cost of introducing additional variables. 
\begin{theorem}
\label{Kocvara-AD-PSD}
Let $\G$ be a matrix defined as in \eqref{definition-arrow-matrix}. Suppose that Assumption~\ref{Assumption-4} holds. Then the following statements are equivalent:
\begin{itemize}
\item $\G \succeq 0$
\item   there exist matrices $\D_{k,\ell} \in \Rbb^{n\times m}$ such that $(\D_{k,\ell})_{ij}=0$ whenever $(i,j) \notin \mathcal{I}_{k,\ell} \times  [m] , k<\ell $, and symmetric matrices $(\bm{C}_k)_{k \in [p]}$, such that $ \G=\sum_{k=1}^p \widetilde{\bm{G}}_k(\bm{D}_k,\bm{C}_k)$, and 
\begin{equation}
\begin{aligned}
\widetilde{\G}_k(\bm{D}_k,\bm{C}_k)=\G_k+ \begin{bmatrix}
        \bm{0}& \bm{D}_k \\
        \bm{D}_k^T & \C_k
    \end{bmatrix}  \succeq 0, k \in [p],
\end{aligned}
\label{AD-equation-LMI}
\end{equation}
where \begin{equation}
    \bm{D}_k=-\sum_{\ell:\ell<k} 
        \D_{\ell,k}+\sum_{\ell:\ell>k}  \D_{k,\ell},
    \label{additional-variables-dependencies}
\end{equation}
\begin{equation}
    \sum_{k=1}^p\bm{C}_k=\bm{\Gamma},
    \label{equation-compliances-advariables-balance}
\end{equation}

\end{itemize}
\end{theorem}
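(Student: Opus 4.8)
The plan is to prove the two implications separately, the reverse one being almost immediate and the forward one ($\G\succeq0\Rightarrow$ the decomposition) carrying all the difficulty. For the reverse direction I would first note that summing \eqref{additional-variables-dependencies} over $k$ makes every $\D_{k,\ell}$ (with $k<\ell$) appear once with a plus sign in $\D_k$ and once with a minus sign in $\D_\ell$, so $\sum_{k=1}^p\D_k=\bm{0}$. Combining this with $\sum_k\A_k=\A$, $\sum_k\B_k=\B$ and \eqref{equation-compliances-advariables-balance}, a direct block computation gives $\sum_{k=1}^p\widetilde{\G}_k(\D_k,\C_k)=\G$; since each $\widetilde{\G}_k\succeq0$, the matrix $\G$ is a sum of positive semidefinite matrices and hence $\G\succeq0$. (This also shows that, given \eqref{additional-variables-dependencies}--\eqref{equation-compliances-advariables-balance}, the identity $\G=\sum_k\widetilde{\G}_k$ is automatic rather than an extra requirement.)

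For the forward direction I would apply the generalized Schur complement of Lemma~\ref{shur-complement} to $\G\succeq0$, obtaining $\A\succeq0$, the range condition $\A\A^\dagger\B=\B$, and $\bm{\Gamma}\succeq\B^T\A^\dagger\B$. I then reduce the whole statement to a single claim: there exist matrices $\Q_k\in\Rbb^{n\times m}$ with rows supported on $\mathcal{I}_k$ such that
\[ \textstyle\sum_k\Q_k=\B,\qquad \text{each column of }\Q_k\in\mathrm{range}(\A_k),\qquad \sum_{k=1}^p\Q_k^T\A_k^\dagger\Q_k\preceq\bm{\Gamma}. \]
Indeed, given such $\Q_k$, setting $\C_k:=\Q_k^T\A_k^\dagger\Q_k+\frac{1}{p}\big(\bm{\Gamma}-\sum_j\Q_j^T\A_j^\dagger\Q_j\big)$ yields $\sum_k\C_k=\bm{\Gamma}$ and $\C_k\succeq\Q_k^T\A_k^\dagger\Q_k$, so each $\widetilde{\G}_k\succeq0$ follows from a second application of Lemma~\ref{shur-complement} with $\B_k+\D_k:=\Q_k$, the range hypothesis there being exactly the column condition on $\Q_k$.

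The construction of the $\Q_k$ is the heart of the argument and the main obstacle: I must exhibit a \emph{single} decomposition $\B=\sum_k\Q_k$ that realizes the Schur bound $\B^T\A^\dagger\B$ as a matrix inequality, i.e. simultaneously for all test vectors, and not merely the pointwise variational bound $u^T\A^\dagger u=\min\{\sum_k u_k^T\A_k^\dagger u_k\}$. I would resolve this with a minimum-norm construction: factor each $\A_k=\S_k\S_k^T$ and stack $\bm{\mathcal{S}}:=[\S_1\ \cdots\ \S_p]$, so that $\A=\bm{\mathcal{S}}\bm{\mathcal{S}}^T$ and $\mathrm{range}(\A)=\mathrm{range}(\bm{\mathcal{S}})=\sum_k\mathrm{range}(\A_k)$; the condition $\A\A^\dagger\B=\B$ then says the columns of $\B$ lie in $\mathrm{range}(\bm{\mathcal{S}})$. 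Taking the least-norm solution $Z^\star:=\bm{\mathcal{S}}^\dagger\B$ of $\bm{\mathcal{S}}Z=\B$, partitioning it conformally into blocks $Z_1^\star,\dots,Z_p^\star$, and setting $\Q_k:=\S_kZ_k^\star$, I would verify the three properties. Here $\sum_k\Q_k=\bm{\mathcal{S}}Z^\star=\bm{\mathcal{S}}\bm{\mathcal{S}}^\dagger\B=\B$, the columns of $\Q_k$ lie in $\mathrm{range}(\S_k)=\mathrm{range}(\A_k)$, and, crucially,
\[ \sum_{k=1}^p\Q_k^T\A_k^\dagger\Q_k=(Z^\star)^TZ^\star=\B^T\A^\dagger\B\preceq\bm{\Gamma}, \]
using $\S_k^T\A_k^\dagger\S_k=\S_k^\dagger\S_k$ (the orthogonal projector onto $\mathrm{range}(\S_k^T)$), the fact that the columns of $Z_k^\star$ already lie in $\mathrm{range}(\S_k^T)$ so that $\Q_k^T\A_k^\dagger\Q_k=(Z_k^\star)^TZ_k^\star$, and the identity $(\bm{\mathcal{S}}^\dagger)^T\bm{\mathcal{S}}^\dagger=\A^\dagger$.

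It remains to express the $\Q_k$ through the prescribed arrow structure, i.e. to produce $\D_{k,\ell}$ supported on $\mathcal{I}_{k,\ell}\times[m]$ with $\Q_k-\B_k=\D_k$ as in \eqref{additional-variables-dependencies}. Writing $\E_k:=\Q_k-\B_k$, each $\E_k$ has rows supported on $\mathcal{I}_k$ and $\sum_k\E_k=\B-\B=\bm{0}$. I would argue row by row: fixing a row index $i$ and letting $\mathcal{J}_i:=\{k:i\in\mathcal{I}_k\}$, relation \eqref{additional-variables-dependencies} restricted to row $i$ is exactly a discrete flow on the complete graph over $\mathcal{J}_i$ whose prescribed node divergences are the rows $(\E_k)_{i,\cdot}$, which sum to zero. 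Since a connected graph admits a flow with any zero-sum divergence, suitable rows of $\D_{k,\ell}$ exist, and they are automatically supported on $\mathcal{I}_{k,\ell}$ because only indices $i\in\mathcal{I}_k\cap\mathcal{I}_\ell$ can carry flow between blocks $k$ and $\ell$. Assembling these choices over all rows yields the required $\D_{k,\ell}$ and completes the proof.
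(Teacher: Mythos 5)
Your proof is correct, and despite the different packaging it produces \emph{exactly} the same certificate as the paper. The paper sets $\bm{X}=\A^\dagger\B$ and takes $\B_k+\D_k=\A_k\bm{X}$ together with $\C_k=\bm{X}^T\A_k\bm{X}+\tfrac1p(\bm{\Gamma}-\bm{X}^T\B)$; since $\bm{\mathcal{S}}^\dagger=\bm{\mathcal{S}}^T(\bm{\mathcal{S}}\bm{\mathcal{S}}^T)^\dagger=\bm{\mathcal{S}}^T\A^\dagger$, your least-norm choice gives $\Q_k=\bm{S}_k\bm{S}_k^T\A^\dagger\B=\A_k\bm{X}$, and your $\C_k$ collapses to the paper's because $\sum_j\Q_j^T\A_j^\dagger\Q_j=\bm{X}^T\A\bm{X}=\B^T\A^\dagger\B$. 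The two arguments differ only in how positivity is verified: the paper writes out the block form of $\widetilde{\G}_k$ and reads off that its generalized Schur complement equals the slack $\tfrac1p(\bm{\Gamma}-\B^T\A^\dagger\B)\succeq 0$, whereas you establish the aggregate inequality $\sum_k\Q_k^T\A_k^\dagger\Q_k\preceq\bm{\Gamma}$ through the Gram identity $(Z^\star)^TZ^\star=\B^T\A^\dagger\B$; both are sound, and all the auxiliary identities you invoke ($\bm{S}_k^T\A_k^\dagger\bm{S}_k=\bm{S}_k^\dagger\bm{S}_k$, the columns of $Z_k^\star$ lying in $\mathrm{range}(\bm{S}_k^T)$, $(\bm{\mathcal{S}}^\dagger)^T\bm{\mathcal{S}}^\dagger=\A^\dagger$) check out. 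Two points where your write-up is arguably sharper than the paper's: the row-by-row flow argument on the complete graph over $\mathcal{J}_i$ makes the existence of $\D_{k,\ell}$ supported on $\mathcal{I}_{k,\ell}\times[m]$ fully explicit, where the paper merely "selects a solution" of \eqref{AD-proof-equation-advariables} and asserts the support property; and your remark that $\G=\sum_k\widetilde{\G}_k$ is automatic once \eqref{additional-variables-dependencies}--\eqref{equation-compliances-advariables-balance} hold is a correct observation that the reverse implication needs nothing more than summing PSD matrices.
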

\begin{remark}
To keep notation simple, in the rest of this paper we will omit the dependency of the matrices $\widetilde{\bm{G}}_k$ on the additional variables $\bm{D}_k$ and $\bm{C}_k$, and will write $\widetilde{\bm{G}}_k:= \widetilde{\bm{G}}_k(\bm{D}_k,\bm{C}_k)$.

\end{remark}
Theorem \ref{Kocvara-AD-PSD} generalizes \cite[Theorem 3]{kovcvara2021decomposition} where the assumptions $\bm{A} \succ 0$ and $\bm{\Gamma} \succ 0$ where needed. These requirements, however, can be restrictive in certain applications, such as structural optimization (see Section \ref{sec:frames}). Although the proof of Theorem \ref{Kocvara-AD-PSD} closely follows that of \cite[Theorem 3]{kovcvara2021decomposition}, we provide it here explicitly for the reader's convenience.


\begin{proof}
   Suppose that $\bm{G} \succeq 0$. We will construct the matrices $\bm{D}_{k,\ell}$ and $\bm{C}_k$ that satisfy the conditions stated in the theorem. To this goal, let us first define $\bm{X}=\bm{A}^\dagger\bm{B}$. According to Lemma \ref{shur-complement}, we have $\bm{AA^\dagger B}=\bm{B} $ so that \begin{equation}
\displaystyle \sum_{k=1}^p\bm{A}_k\bm{X}=\sum_{k=1}^p\bm{B}_k.
       \label{AD-proof-eq1}
   \end{equation}
Thus, the entries of $\bm{A}_k \bm{X}$ corresponding to row indices in $\mathcal{I}_k$ that do not appear in any intersection with other index sets are uniquely determined. Specifically, for all $j \in [p]$ and $i \in \mathcal{I}_k \setminus \left( \bigcup_{\ell:\ell >k}(\mathcal{I}_{k,\ell}) \cup \bigcup_{\ell:\ell <k}(\mathcal{I}_{\ell,k})\right)$, we have \begin{equation}
    (\bm{A}_k\bm{X})_{ij}=(\bm{B}_k)_{ij}.
    \label{AD-proof-equality-1}
\end{equation}
 For each $k \in [p-1]$, we define successively $\bm{D}_{k,\ell}$ such that $ \ell > k$, by selecting a solution of the equation \begin{equation}
    \bm{A}_k\bm{X}-\bm{B}_k=-\sum_{\substack{\ell:\ell<k \\ \mathcal{I}_{\ell,k}\neq \varnothing}}{\bm{D}_{\ell,k}}+\sum_{\substack{\ell:\ell>k \\ \mathcal{I}_{k,\ell}\neq \varnothing}}{\bm{D}_{k,\ell}}.
    \label{AD-proof-equation-advariables}
\end{equation}
 The solutions of \eqref{AD-proof-equation-advariables} are obviously not unique, however any selected solution $\bm{D}_{k,\ell}$ is consistent with the last equation $$\bm{A}_p\bm{X}-\bm{B}_p=-\sum_{\substack{\ell:\ell<p \\ \mathcal{I}_{\ell,p}\neq \varnothing}}{\bm{D}_{\ell,p}},$$ because of Equality \eqref{AD-proof-eq1}. Moreover, by \eqref{definition-arrow-matrix-sparsity} and \eqref{AD-proof-equality-1}, each $(\bm{D}_{k,\ell})_{ij}=0$ when $(i,j) \notin \mathcal{I}_{k,\ell} \times [m]$ as desired.


Next, let us define $\bm{C}_k=\bm{S}+\bm{X}^T\bm{A}_k\bm{X}, k \in [p],$ where $\bm{S}=\frac{1}{p}(\bm{\Gamma}-\bm{X}^T\bm{B})$. We have $ \sum_{k=1}^p\bm{C}_k=p\bm{S}+\bm{X}^T(\sum_{k=1}^p\bm{A}_k)\bm{X}=\bm{\Gamma}-\bm{X}^T\bm{B}+\bm{X}^T\bm{A}\bm{X}=\bm{\Gamma}$. 

Moreover, we have  
\begin{align*}
  \widetilde{\bm{G}}_k&=\bm{G}_k-\sum_{\substack{\ell:\ell<k \\ \mathcal{I}_{\ell,k}\neq \varnothing}}\begin{bmatrix}\bm{0} & \bm{D}_{\ell,k} \\ \bm{D}_{\ell,k}^T & \bm{0}\end{bmatrix}+\sum_{\substack{\ell:\ell>k \\ \mathcal{I}_{k,\ell}\neq \varnothing}}\begin{bmatrix}\bm{0} & \bm{D}_{k,\ell} \\ \bm{D}_{k,\ell}^T & \bm{0}\end{bmatrix}+\begin{bmatrix} \bm{0} & \bm{0} \\ \bm{0} & \bm{C}_k\end{bmatrix}  \\
  &=\begin{bmatrix}
      \bm{A}_k & \bm{A}_k\bm{X} \\ \bm{X}^T\bm{A}_k^T & \bm{X}^T\bm{A}_k\bm{X}+\bm{S}      
  \end{bmatrix}.
\end{align*}
Since $\bm{A}_k \succeq 0$ by assumption, we conclude that $  \widetilde{\bm{G}}_k \succeq 0$ by using Lemma \ref{shur-complement}.
%

The reversed assertion holds by summing over $k$ the PSD matrices $\widetilde{\bm{G}}_k$.
\label{proof-AD}
\end{proof}

 Practically, the conditions \eqref{additional-variables-dependencies} and \eqref{equation-compliances-advariables-balance} will not be defined explicitly as in the proof of Theorem \ref{Kocvara-AD-PSD}. Instead, the matrices $\bm{D}_{k,\ell}$ and $\bm{C}_k$ are determined numerically via semidefinite programming, such that the constraints \eqref{additional-variables-dependencies} and \eqref{equation-compliances-advariables-balance} are embedded directly into the formulation \eqref{AD-equation-LMI} as
\begin{equation}
    \begin{aligned}
\G_k+ \begin{bmatrix}
        \bm{0}&  -\sum_{\ell:\ell<k} 
        \D_{\ell,k}+\sum_{\ell:\ell>k}  \D_{k,\ell} \\
        -\sum_{\ell:\ell<k} 
        \D_{\ell,k}^T+\sum_{\ell:\ell>k}^T  \D_{k,\ell} & \C_k
    \end{bmatrix} &\succeq 0, \forall k \in [p-1], \\
    \G_p+ \begin{bmatrix}
        \bm{0}& -\sum_{\ell:\ell<k} 
        \D_{\ell,p} \\
        -\sum_{\ell:\ell<k} 
        \D_{\ell,p}^T & \bm{\Gamma}-\sum_{k=1}^p\C_k
    \end{bmatrix} &\succeq 0.
\end{aligned}
\label{compact-formulation-AD-SDP}
\end{equation}
Let us notice that the choice of the decomposed matrices $\bm{B}_k$ is not unique. There is complete freedom in selecting them, as the positive semidefiniteness of $\widetilde{\bm{G}}_k$ can always be ensured by appropriately choosing the matrices $\bm{D}_{k,\ell}$.
Similarly, the parametrization of the blocks $\C_k$ is not unique, as any parametrization satisfying $\sum_{k=1}^p \C_k = \bm{\Gamma}$ is valid. 

For clarity and consistency, we adopt the compact formulation \eqref{compact-formulation-AD-SDP} in the sequel.


\begin{example}
Let $\bm{G}=\begin{bmatrix}
    \bm{A} &\bm{B}\\
    \bm{B}^T & \gamma
\end{bmatrix}$, where $$\bm{A}=\begin{bmatrix}
    2a_1+a_2 & a_1 & a_1+a_2 & a_2 & 0  \\
    a_1 & 2a_1 & a_1 & 0 & 0\\
a_1+a_2 & a_1 & a_1+a_2 & a_2 & 0 \\
a_2 & 0 & a_2 & a_2+a_3 & a_3 \\
0 & 0 & 0 & a_3 & 2a_3 
\end{bmatrix},$$
 $\bm{B}=\begin{bmatrix}
     b_1 & b_2& b_3 & b_4 & b_5 
\end{bmatrix}^T$, with $a_1,a_2,a_3, \gamma \in \mathbb{R}_{>0}$ and $b_1,b_2,b_3,b_4,b_5 \in \mathbb{R}$.
Using the index subsets $\mathcal{I}_1=\lbrace 1,2,3 \rbrace$, $\mathcal{I}_2=\lbrace 1,3,4 \rbrace$ and $\mathcal{I}_3=\lbrace 4,5 \rbrace$, we can extract from $\bm{A}$ the corresponding PSD matrices:
$$\bm{A}_1=a_1 \bm{\mathscr{E}}_{\mathcal{I}_1}^T\begin{bmatrix}
    2 & 1 & 1 \\ 1 & 2 & 1 \\ 1 & 1 & 1 
\end{bmatrix}\bm{\mathscr{E}}_{\mathcal{I}_1},
\bm{A}_2=a_2\bm{\mathscr{E}}_{\mathcal{I}_2}^T\begin{bmatrix}
    1 & 1 & 1  \\ 1  & 1 & 1 \\ 1  &  1 & 1  
\end{bmatrix}\bm{\mathscr{E}}_{\mathcal{I}_2},\bm{A}_3=a_3\bm{\mathscr{E}}_{\mathcal{I}_3}^T\begin{bmatrix}
     1 & 1 \\  1 & 2
\end{bmatrix}\bm{\mathscr{E}}_{\mathcal{I}_3}.$$
The intersections of the index subsets are $\mathcal{I}_{1,2}=\lbrace 1, 3\rbrace$, $\mathcal{I}_{1,3}=\varnothing$ and $\mathcal{I}_{2,3}=\lbrace 4\rbrace$. According to the index subsets, we choose $\bm{B}_1 = \bm{\mathscr{E}}_{\mathcal{I}_1}^T\begin{bmatrix}b_1 & b_2 & b_3 \end{bmatrix}^T$, $\bm{B}_2 = \bm{\mathscr{E}}_{\mathcal{I}_2}^T\begin{bmatrix}0 & 0 & b_4 \end{bmatrix}^T$, and $\bm{B}_3 = \begin{bmatrix} 0 & b_5\end{bmatrix}^T$. 
In contrast, 
$\D_1 = \D_{1,2} = \bm{\mathscr{E}}_{\mathcal{I}_1}^T\begin{bmatrix}d_{1,2}^{(1)} & 0 & d_{1,2}^{(2)}\end{bmatrix}^T$,
$\D_2  =-\D_{1,2}+\D_{2,3} =\bm{\mathscr{E}}_{\mathcal{I}_2}^T \begin{bmatrix}-d_{1,2}^{(1)}  & -d_{1,2}^{(2)} & d_{2,3}^{(1)} \end{bmatrix}^T$ 
and $\D_3 = -\D_{2,3} = \bm{\mathscr{E}}_{\mathcal{I}_2}^T\begin{bmatrix} -d_{2,3}^{(1)} & 0\end{bmatrix}^T$ are unique, with the notation $d_{k,\ell}^{(i)}$ expressing that it is an $i$-th unknown component of the $\bm{D}_{k,\ell}$ matrix. 
Using Theorem \ref{Kocvara-AD-PSD}, we can find $ d_{1,2}^{(1)}, d_{1,2}^{(2)} $, $ d_{2,3}^{(1)}$, $c_1$ and $c_2$ such that the condition $\bm{G} \succeq 0$ is equivalent to
$$ \widehat{\bm{G}}_1= \begin{bmatrix}
   \bm{A}_1  &  \bm{B}_1+ \bm{D}_1 \\  \bm{B}_1^T+\bm{D}_1^T & c_1
\end{bmatrix} \succeq 0,
  $$
$$ \widehat{\bm{G}}_2= \begin{bmatrix}
    \bm{A}_2  &   \bm{B}_2+\bm{D}_2 \\  \bm{B}_2^T+\bm{D}_2^T & c_2
\end{bmatrix} \succeq 0,
  $$
and 
$$ \widehat{\bm{G}}_3= \begin{bmatrix}
   \bm{A}_3  &   \bm{B}_3+\bm{D}_3 \\  \bm{B}_3^T+\bm{D}_3^T & \gamma-c_1-c_2
\end{bmatrix} \succeq 0.
  $$
\label{first-example}
\end{example}
\subsection{Computational considerations}
\label{computational-consideration-lmi}
We start this section by noting that, although not required for the validity of the theoretical results, certain structural properties of the index sets $\mathcal{I}_k$ and of the submatrices $\bm{A}_k$, $\bm{B}_k$ tend to improve numerical performance. In particular, the following properties help reduce the size of the decomposed matrices $\widehat{\bm{G}}_k$ and limit the number of additional variables introduced during decomposition:
\begin{itemize}
\item \textbf{Non-nesting subsets.}
$\mathcal{I}_{k} \cup \mathcal{I}_{\ell}  \neq \mathcal{I}_{k}$, for all $1 \leq k \neq \ell \leq p$.

\emph{Comment:} If this property fails, one matrix $\bm{A}_1$ is contained as a submatrix in another $\bm{A}_2$, leading to $\vert \bm{G} \vert = \vert \bm{G}_2 \vert$ and the decomposition will have no computational advantage. 
\item \textbf{Sparse intersection.} For each $k \in [p]$, there are at most $p_k$ indices $\ell_i$ such that $\mathcal{I}_{k,\ell_i}\neq \varnothing$, $i \in [p_k]$ where $1 \leq p_k \ll p.$

\emph{Comment:} This limits the number of additional variables introduced when using the arrow decomposition technique. Unlike chordal decomposition methods which follow the chordal sparsity pattern and give little control over the number of extra variables \cite{fukuda2001exploiting}, this property allows a more efficient decomposition. Unless the matrices  $\A_k$ are fully dense, it is generally possible to choose a decomposition satisfying this property.
\end{itemize}

Moreover, for each $k \in [p]$, at least one of the following properties should hold:
\begin{itemize}
\item \textbf{Sparse submatrices.}  $\A_k$ and $\B_k$ are sparse.

\emph{Comment:} This property clearly shrinks the size of $\widehat{\G}_k$ by removing the zero rows and columns, as seen in Example \ref{first-example}.
 \item \textbf{Low rank submatrices.} $\A_k$ is low rank.
 
 \emph{Comment:} This property allows in one hand, to reduce the size $\widehat{\G}_k$ by adequate projection procedure, on the other hand, it allows to reduce the number of additional variables needed to parametrize $\B_k+\D_k$ while maintaining positive semidefinitness of $\widehat{\G}_k$. It will be explained in more details in Proposition \ref{proposition-rank-defficiency}.
\end{itemize}
In practice, when decomposing the main matrix $\G$, it is preferable to select a decomposition that satisfies as many of these properties as possible, thereby balancing matrix size reduction with computational efficiency.



Furthermore, as Example \ref{first-example} revealed, it is convenient to maintain a component-wise representation of \eqref{additional-variables-dependencies}. To this end, we define $\bm{D}_k = \bm{\Pi}_k \bm{D}$, where $\bm{D}$ is a dense matrix formed by concatenating all entries of $\bm{D}_k$ for every $k \in [p]$, and $\bm{\Pi}_k$ is a binary (zero-one) matrix of appropriate dimensions that selects the relevant entries of $\bm{D}$ corresponding to each $k \in [p]$. The existence of such a construction is established in Lemma~\ref{compacte-form-add-variables} in Appendix~\ref{appendix}. This reformulation facilitates handling the component-wise relationship between $\bm{D}_k$ and $\bm{D}_{k,\ell}$ and also simplifies implementation. 
 
\begin{example}
    Consider the matrices $\bm{D}_k$, $k \in \{1,2,3\}$, in Example \ref{first-example}. Let $\bm{D}=\begin{bmatrix}
        d_{1,2}^{(1)} & d_{1,2}^{(2)} & d_{2,3}^{(1)}
    \end{bmatrix}^T$. We can write $\bm{D}_1=\bm{\mathscr{E}}_{\mathcal{I}_1}^T\bm{\Pi}_1\bm{D}$, $\bm{D}_2=\bm{\mathscr{E}}_{\mathcal{I}_2}^T\bm{\Pi}_2\bm{D}$ and $\bm{D}_3=\bm{\mathscr{E}}_{\mathcal{I}_3}^T\bm{\Pi}_3\bm{D}$, in which $$\bm{\Pi}_1=\begin{bmatrix}
        1 & 0 & 0\\ 0 & 0 & 0 \\ 0 & 1 & 0 
    \end{bmatrix},\quad
    \bm{\Pi}_2=\begin{bmatrix}
        -1 & 0 & 0\\ 0 & -1 & 0 \\ 0 & 0 & 1
    \end{bmatrix}, \quad
    \bm{\Pi}_3=\begin{bmatrix}
     0 & 0 & -1 \\ 0 & 0 & 0
    \end{bmatrix}.$$
\end{example}

Now, when determining the additional matrix variables $\D_{k}$ and $\C_k$ via semidefinite programming, it often occurs that one or more matrices $\bm{A}_k$ are rank-deficient. This implies that the constraint $\widetilde{\bm{G}}_k \succeq 0$ may lack an interior point, potentially leading to numerical instability in interior-point methods. The following result offers a further refinement of the approach in \cite{kovcvara2021decomposition}: it systematically removes potential rank deficiencies in the matrices $\bm{A}_k$, yielding smaller LMIs that are equivalent to \eqref{AD-equation-LMI}. Additionally, it provides conditions under which the resulting LMIs admit strict feasibility.

\begin{proposition}
Suppose that Assumption \ref{Assumption-4} holds. For each $k \in [p]$, let $\bm{P}_k \in \mathbb{R}^{n_k \times s_k}, s_k \leq n_k$, be a matrix with full column rank such that $\bm{P}_k^T\bm{P}_k=\bm{I}_{s_k}$, $\text{Span}(\bm{A}_k)\subseteq \text{Span}(\bm{P}_k)$. The condition \eqref{AD-equation-LMI} is equivalent to the conditions
 \begin{equation}
 \begin{aligned}
    & \begin{bmatrix}
        \bm{P}_k^T\bm{A}_k\bm{P}_k & \bm{P}_k^T(\bm{B}_k+\bm{D}_k) \\ (\bm{B}_k+\bm{D}_k)^T\bm{P}_k & \bm{C}_k
    \end{bmatrix} \succeq 0, \forall k \in [p-1], \\
     &\begin{bmatrix}
        \bm{P}_p^T\bm{A}_p\bm{P}_p & \bm{P}_p^T(\bm{B}_p+\bm{D}_p) \\ (\bm{B}_p+\bm{D}_p)^T\bm{P}_k & \bm{\Gamma}-\sum_{k=1}^{p-1}\bm{C}_k
    \end{bmatrix} \succeq 0,\\
    &\text{Span}(\bm{B}_k+\D_k) \subseteq \text{Span}(\bm{P}_k).
     \label{projected-lmi}
      \end{aligned}
 \end{equation}

    Moreover, if $\text{Span}(\bm{A}_k)= \text{Span}(\bm{P}_k)$ and $\bm{\Gamma}-\bm{B}^T\bm{A}^\dagger\bm{B} \succ 0$, then the above matrix inequalities in \eqref{projected-lmi} are strict.
    

\label{proposition-rank-defficiency}
\end{proposition}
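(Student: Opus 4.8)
The plan is to prove the equivalence by a congruence transformation that isolates the kernel of $\bm{A}_k$, and to prove strict feasibility by instantiating the explicit decomposition from the proof of Theorem~\ref{Kocvara-AD-PSD}. For the equivalence, fix $k$ and complete $\bm{P}_k$ to an orthogonal matrix $[\bm{P}_k, \bm{Q}_k] \in \mathbb{R}^{n_k \times n_k}$, where the columns of $\bm{Q}_k$ form an orthonormal basis of the orthogonal complement of $\text{Span}(\bm{P}_k)$, so that $\bm{P}_k\bm{P}_k^T + \bm{Q}_k\bm{Q}_k^T = \bm{I}$ and $\bm{P}_k^T\bm{Q}_k = \bm{0}$. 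Since $\bm{A}_k$ is symmetric and $\text{Span}(\bm{A}_k) \subseteq \text{Span}(\bm{P}_k)$, we have $\bm{A}_k\bm{Q}_k = \bm{0}$ and $\bm{A}_k = \bm{P}_k\widehat{\bm{A}}_k\bm{P}_k^T$ with $\widehat{\bm{A}}_k := \bm{P}_k^T\bm{A}_k\bm{P}_k$. I would then apply the congruence by $\text{diag}([\bm{P}_k,\bm{Q}_k],\bm{I})$ to $\widetilde{\bm{G}}_k$; since this transforming matrix is orthogonal, it preserves positive semidefiniteness.

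After the transformation the top-left block becomes $\text{diag}(\widehat{\bm{A}}_k, \bm{0})$, the block coupling the $\bm{Q}_k$-part to the trailing block is $\bm{Q}_k^T(\bm{B}_k + \bm{D}_k)$, and the trailing block remains $\bm{C}_k$. The key step is the elementary fact that a PSD matrix with a zero diagonal block has vanishing couplings to that block; hence $\widetilde{\bm{G}}_k \succeq 0$ forces $\bm{Q}_k^T(\bm{B}_k + \bm{D}_k) = \bm{0}$, which is exactly $\text{Span}(\bm{B}_k + \bm{D}_k) \subseteq \text{Span}(\bm{P}_k)$. Once this holds, the $\bm{Q}_k$-row and column are identically zero and may be deleted, leaving precisely the reduced matrix of \eqref{projected-lmi}; the converse follows by reassembling the span condition with the reduced PSD condition. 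The case $k = p$ is identical with $\bm{C}_p$ replaced by $\bm{\Gamma} - \sum_{k=1}^{p-1}\bm{C}_k$. I expect the \emph{main obstacle} to be carrying out this zero-block step so that it yields exactly the weaker span condition $\subseteq \text{Span}(\bm{P}_k)$ (rather than the stronger $\subseteq \text{Span}(\bm{A}_k)$ that Lemma~\ref{shur-complement} would give for $\widetilde{\bm{G}}_k$ directly), and checking the two are consistent.

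For strict feasibility, I would instantiate the decomposition constructed in the proof of Theorem~\ref{Kocvara-AD-PSD}, which is feasible whenever $\bm{G} \succeq 0$: with $\bm{X} = \bm{A}^\dagger\bm{B}$ it gives $\bm{B}_k + \bm{D}_k = \bm{A}_k\bm{X}$ and $\bm{C}_k = \bm{X}^T\bm{A}_k\bm{X} + \bm{S}$, where $\bm{S} = \tfrac{1}{p}(\bm{\Gamma} - \bm{X}^T\bm{B}) = \tfrac{1}{p}(\bm{\Gamma} - \bm{B}^T\bm{A}^\dagger\bm{B})$ using symmetry of $\bm{A}^\dagger$. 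The hypothesis $\bm{\Gamma} - \bm{B}^T\bm{A}^\dagger\bm{B} \succ 0$ then gives $\bm{S} \succ 0$. Next I would show $\widehat{\bm{A}}_k \succ 0$: it is PSD, and if $\widehat{\bm{A}}_k v = \bm{0}$ then $v^T\bm{P}_k^T\bm{A}_k\bm{P}_k v = 0$ forces $\bm{A}_k\bm{P}_k v = \bm{0}$, so $\bm{P}_k v \in \ker(\bm{A}_k) \cap \text{Range}(\bm{A}_k) = \{\bm{0}\}$ by $\text{Span}(\bm{A}_k) = \text{Span}(\bm{P}_k)$, whence $v = \bm{0}$ by full column rank of $\bm{P}_k$.

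It remains to compute the Schur complement of the reduced block with respect to the now-invertible $\widehat{\bm{A}}_k$. Using $\bm{A}_k\bm{P}_k = \bm{P}_k\widehat{\bm{A}}_k$ and $\bm{A}_k = \bm{P}_k\widehat{\bm{A}}_k\bm{P}_k^T$, the cross term collapses: $\bm{X}^T\bm{A}_k\bm{P}_k\,\widehat{\bm{A}}_k^{-1}\,\bm{P}_k^T\bm{A}_k\bm{X} = \bm{X}^T\bm{P}_k\widehat{\bm{A}}_k\widehat{\bm{A}}_k^{-1}\widehat{\bm{A}}_k\bm{P}_k^T\bm{X}\cdots = \bm{X}^T\bm{A}_k\bm{X}$, so the Schur complement equals $\bm{C}_k - \bm{X}^T\bm{A}_k\bm{X} = \bm{S} \succ 0$. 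Together with $\widehat{\bm{A}}_k \succ 0$, the strict version of Lemma~\ref{shur-complement} yields strict positive definiteness of each reduced block; for $k = p$ the complement is again $\bm{S} \succ 0$ because $\bm{\Gamma} - \sum_{k=1}^{p-1}\bm{C}_k = \bm{C}_p$. This cancellation in the Schur complement, which crucially uses the range \emph{equality} $\text{Span}(\bm{A}_k) = \text{Span}(\bm{P}_k)$ rather than mere inclusion, is the delicate point of the strict-feasibility part.
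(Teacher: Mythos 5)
Your proof is correct and follows the same overall strategy as the paper's: reduce to the equivalence of the two matrix inequalities for each fixed $k$, and obtain strictness by instantiating the explicit construction $\bm{B}_k+\bm{D}_k=\bm{A}_k\bm{X}$, $\bm{C}_k=\bm{X}^T\bm{A}_k\bm{X}+\bm{S}$ from the proof of Theorem~\ref{Kocvara-AD-PSD}, where the hypothesis gives $\bm{S}\succ 0$. The genuine difference lies in how the equivalence is executed. The paper proves the forward implication by the one-sided congruence with $\mathrm{diag}(\bm{P}_k,\bm{I})$ and extracts the span condition from Lemma~\ref{shur-complement} (which in fact yields the stronger inclusion $\text{Span}(\bm{B}_k+\bm{D}_k)\subseteq\text{Span}(\bm{A}_k)$), and proves the converse by a case analysis on whether a test vector $\bm{u}$ lies in $\text{Span}(\bm{P}_k)$. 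You instead complete $\bm{P}_k$ to an orthogonal matrix $[\bm{P}_k\ \bm{Q}_k]$ and invoke the fact that a PSD matrix with a vanishing diagonal block has vanishing couplings to that block. This buys two things: both directions collapse into a single reversible reduction (deleting or reinserting identically zero rows and columns), and it handles a general test vector with components both inside and outside $\text{Span}(\bm{P}_k)$ automatically, which the paper's second case treats somewhat loosely (a vector not in $\text{Span}(\bm{P}_k)$ need not be orthogonal to it, so the decomposition your change of basis performs is actually needed there). In the strictness part your explicit cancellation $\bm{X}^T\bm{A}_k\bm{P}_k(\bm{P}_k^T\bm{A}_k\bm{P}_k)^{-1}\bm{P}_k^T\bm{A}_k\bm{X}=\bm{X}^T\bm{A}_k\bm{X}$, together with the observation that invertibility of $\bm{P}_k^T\bm{A}_k\bm{P}_k$ requires the range \emph{equality} $\text{Span}(\bm{A}_k)=\text{Span}(\bm{P}_k)$, spells out precisely what the paper leaves implicit in its appeal to Lemma~\ref{shur-complement}.
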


\begin{proof}
Let $k \in [p]$ and define $\bm{D}_k$ as in \eqref{AD-proof-equation-advariables}. According to Theorem \ref{Kocvara-AD-PSD}, it suffices to show that $$\begin{bmatrix}
        \bm{A}_k & (\bm{B}_k+\bm{D}_k) \\ (\bm{B}_k+\bm{D}_k)^T & \bm{C}_k
    \end{bmatrix} \succeq 0 \Leftrightarrow \begin{bmatrix}
        \bm{P}_k^T\bm{A}_k\bm{P}_k & \bm{P}_k^T(\bm{B}_k+\bm{D}_k) \\ (\bm{B}_k+\bm{D}_k)^T\bm{P}_k & \bm{C}_k
    \end{bmatrix} \succeq 0.$$ 
    
    \begin{itemize}
        \item[$ \Rightarrow $] 
We have $$ \begin{bmatrix}
        \bm{P}_k^T\bm{A}_k\bm{P}_k & \bm{P}_k^T(\bm{B}_k+\bm{D}_k) \\ (\bm{B}_k+\bm{D}_k)^T\bm{P}_k & \bm{C}_k
    \end{bmatrix}=\begin{bmatrix}
        \bm{P}_k & \bm{0}\\ \bm{0} & \bm{I}
    \end{bmatrix}^T \begin{bmatrix}
        \bm{A}_k & (\bm{B}_k+\bm{D}_k) \\ (\bm{B}_k+\bm{D}_k)^T & \bm{C}_k
    \end{bmatrix}\begin{bmatrix}
        \bm{P}_k & \bm{0}\\ \bm{0} & \bm{I}
    \end{bmatrix} \succeq 0.$$    
    Moreover, we have $\text{Span}(\bm{B}_k+\D_k) \subseteq \text{Span}(\bm{A}_k)$ by Lemma \ref{shur-complement}, and thus $\text{Span}(\bm{B}_k+\D_k) \subseteq \text{Span}(\bm{P}_k)$ using the assumption on $\bm{P}_k$. 
     \item[$ \Leftarrow $] Let $\bm{u} \in \mathbb{R}^{n_k}$ and $\bm{v} \in \mathbb{R}^{m}$. 
     \begin{itemize}
         \item If $\bm{u} \in \text{Span}(\bm{P}_k)$, there exists $\bm{w}$ such that $\bm{u}=\bm{P}_k\bm{w}$. Then, we have
        \begin{align*}
\begin{bmatrix}
         \bm{u} \\ \bm{v}
     \end{bmatrix}^T \begin{bmatrix}
        \bm{A}_k & (\bm{B}_k+\bm{D}_k) \\ (\bm{B}_k+\bm{D}_k)^T & \bm{C}_k
    \end{bmatrix}  &\begin{bmatrix}
         \bm{u} \\ \bm{v}
     \end{bmatrix}=\\ &\begin{bmatrix}
         \bm{w} \\ \bm{v}
     \end{bmatrix}^T \begin{bmatrix}
        \bm{P}_k^T\bm{A}_k\bm{P}_k & \bm{P}_k^T(\bm{B}_k+\bm{D}_k) \\ (\bm{B}_k+\bm{D}_k)^T\bm{P}_k & \bm{C}_k
    \end{bmatrix}  \begin{bmatrix}
         \bm{w} \\ \bm{v}
     \end{bmatrix} \geq 0.     
     \end{align*}
     \item if $\bm{u} \notin \text{Span}(\bm{P}_k)$ then $\bm{u} \notin \text{Span}(\bm{A}_k)$, so that $\bm{u}^T\A_k \bm{u}=0$. Since $\text{Span}(\bm{B}_k+\D_k)\subseteq \text{Span}(\bm{P}_k)$, we have also $\bm{u}^T(\B_k+\D_k)\bm{v}=0$. Thus, we have 
  \begin{align*}
\begin{bmatrix}
         \bm{u} \\ \bm{v}
     \end{bmatrix}^T \begin{bmatrix}
        \bm{A}_k & (\bm{B}_k+\bm{D}_k) \\ (\bm{B}_k+\bm{D}_k)^T & \bm{C}_k
    \end{bmatrix}  &\begin{bmatrix}
         \bm{u} \\ \bm{v}
     \end{bmatrix}=    \bm{v}^T\C_k\bm{v} \geq 0, \text{ by Assumption \ref{Assumption-4}}.
     \end{align*}
  \end{itemize}
    \end{itemize}
      Now, if $\text{Span}(\bm{A}_k)=\text{Span}(\bm{P}_k)$, we have $\bm{P}_k^T\bm{A}_k\bm{P}_k \succ 0$. Similarly to the proof of Theorem \ref{Kocvara-AD-PSD}, we can define the matrices $\bm{D}_{k,\ell} $ so that it solves Equation \eqref{AD-proof-equation-advariables}, and $\bm{C}_k=\frac{1}{p}\bm{S}+\bm{X}^T\bm{A}_k\bm{X}$ with $\bm{X}=\bm{A}^\dagger \bm{B}$ and $\bm{S}=\bm{\Gamma}-\bm{B}^T\bm{A}^\dagger\bm{B}$. Using the assumption $\bm{S} \succ 0$ and Lemma \ref{shur-complement}, we conclude that 
      \begin{align*}
&\begin{bmatrix}
        \bm{A}_k & \bm{B}_k+\bm{D}_k \\ (\bm{B}_k+\bm{D}_k)^T & \bm{C}_k
    \end{bmatrix} \succeq 0  \Leftrightarrow \\ &\begin{bmatrix}
        \bm{P}_k^T\bm{A}_k\bm{P}_k & \bm{P}_k^T(\bm{B}_k+\bm{D}_k) \\ (\bm{B}_k+\bm{D}_k)^T\bm{P}_k & \bm{C}_k
    \end{bmatrix}=\begin{bmatrix}
        \bm{P}_k^T\bm{A}_k\bm{P}_k & \bm{P}_k^T\bm{A}_k\bm{X} \\ \bm{X}^T\bm{A}_k^T\bm{P}_k &  \bm{X}^T\bm{A}_k\bm{X}+\bm{S} 
    \end{bmatrix} \succ 0.       
      \end{align*}

\end{proof}

Proposition \ref{proposition-rank-defficiency} is particularly useful when one of the matrices $\bm{A}_k$ is rank-deficient for some $k \in [p]$. In such cases, by the spectral theory for positive semidefinite matrices, we can find a matrix $\bm{P}_k \in \mathbb{R}^{s_k \times n_k}$ with $ \bm{P}_k^T\bm{P}_k=\bm{I}_{s_k}$, $\text{Span}(\bm{A}_k) \subseteq \text{Span}(\bm{P}_k)$ and $s_k < n_k$, to which we impose the condition $\text{Span}(\bm{B}_k+\D_k)\subseteq \text{Span}(\bm{P}_k)$. This allows for a dimensionality reduction of one or more of the matrices involved the LMIs \eqref{projected-lmi}. 

Moreover, since the condition $\text{Span}(\bm{B}_k+\D_k)\subseteq \text{Span}(\bm{P}_k)$ is equivalent to $\bm{B}_k + \bm{\Pi}_k \bm{D} \in \text{Im}(\bm{P}_k)$, it suffices to impose $\bm{B}_k + \bm{\Pi}_k \bm{D} \in \text{Im}(\bm{A}_k)$. This constraint can be incorporated implicitly by reparameterizing $\bm{B}_k+\D_k$: choose a matrix $\bm{W}_k$ such that $\text{Span}(\bm{W}_k) = \text{Null}(\bm{A}_k)$, and solve for $\bm{D}$ the system $$\left\lbrace (\bm{B}_k + \bm{\Pi}_k \bm{D})^T \bm{W}_k = \bm{0}, k \in [p] \right\rbrace.$$ This approach allows elimination of some (or all) entries of $\bm{D}$ thereby improving numerical stability related to the interior point method while handling the LMI $\widetilde{\bm{G}}_k \succeq 0$; see, e.g., \texorpdfstring{\citep[Chapter~2]{de2006aspects}}{}. 

\begin{remark}
If the matrix $\bm{\Gamma}$ is rank-deficient, the same procedure can be applied by finding a matrix $\bm{P}_{\bm{\Gamma}} \in \mathbb{R}^{m \times s}$ with $s < m$, such that $\bm{P}_{\bm{\Gamma}}^T \bm{P}_{\bm{\Gamma}} = \bm{I}_s$, $\text{Span}(\bm{\Gamma}) \subseteq \text{Span}(\bm{P}_{\bm{\Gamma}})$ and $\text{Span}(\bm{B}_k+\D_k)\subseteq \text{Span}(\bm{P}_k)$. One can then work with the transformed matrices $\bm{P}_{\bm{\Gamma}}^T \bm{C}_k \bm{P}_{\bm{\Gamma}}$ for all $k \in [p]$.
\end{remark}

\begin{example}
Consider again the matrices from Example \ref{first-example}. We have $\text{rank}(\bm{A}_1)=3$,  $\text{rank}(\bm{A}_2)=1$, and $\text{rank}(\bm{A}_3)=2$. Therefore, while the null space bases of $\bm{A}_1 $, $\bm{A}_2$ and $\bm{A}_3$ respectively evaluate as
$$
\bm{W}_1 = \begin{bmatrix}
0 & 0\\
0 & 0\\
0 & 0\\
1 & 0\\
0 & 1
\end{bmatrix},\quad
\bm{W}_2 = \begin{bmatrix}
0 & -1 & -1 & 0\\
1 & 0 & 0 & 0\\
0 & 1 & 0 & 0\\
0 & 0 & 1 & 0\\
0 & 0 & 0 & 1
\end{bmatrix},\quad
\bm{W}_3 = \begin{bmatrix}
1 & 0 & 0\\
0 & 1 & 0\\
0 & 0 & 1\\
0 & 0 & 0\\
0 & 0 & 0
\end{bmatrix},
$$
range space bases read as
$
\bm{P}_1 = \bm{W}_3,
\bm{P}_2 =\bm{\mathscr{E}}_{\mathcal{I}_2}^T \begin{bmatrix} 1 & 1 & 1
\end{bmatrix}^T \text{ and }
\bm{P}_3 = \bm{W}_1.
$

%
Solving the system $\left\lbrace(\bm{B}_k + \bm{\Pi}_k \bm{D})^\top \bm{W}_k = \bm{0}, k=1,2,3 \right\rbrace$, we can reparametrize $\bm{D}$ using a single variable $d_R^{(1)}$ as
$$\bm{D} = \begin{bmatrix}
        d_{1,2}^{(1)} \\ d_{1,2}^{(2)} \\ d_{2,3}^{(1)}
    \end{bmatrix} = \begin{bmatrix}-b_4/3\\ -b_4/3 \\ -2b_4/3 \end{bmatrix} + \begin{bmatrix}-1\\-1\\1\end{bmatrix} d_{R}^{(1)}.$$
Finally, projection using the range space bases $\bm{P}_k$ leads to the final matrix inequalities of decreased size,
$$
\begin{bmatrix}
2 a_1 & a_1 & a_1 & b_1-b_4/3 - d_R^{(1)}\\
a_1 & 2 a_1 & a_1 & b_2\\
a_1 & a_1 & a_1 & b_3 - b_4/3- d_R^{(1)}\\
b_1-b_4/3-d_R^{(1)} & b_2 & b_3-b_4/3 - d_R^{(1)} & c_1
\end{bmatrix}\succeq 0,
$$
$$
\begin{bmatrix}
9 a_2 & b_4+3d_R^{(1)}\\
b_4+3d_R^{(1)} & c_2
\end{bmatrix}\succeq 0,
\begin{bmatrix}
a_3 & a_3 & 2b_4/3 -d_R^{(1)}\\
a_3 & 2a_3 & b_5\\
2b_4/3-d_R^{(1)} & b_5 & \gamma-c_1-c_2
\end{bmatrix}\succeq 0.
$$

\end{example}


\if{
\section{Exploiting arrow sparsity for polynomial optimization }
\label{sec:ASPOP}
As in Section~\ref{sec: ADLMI}, given $n,m \in \N$, let us assume that $I = \{1,\dots,n\}$ is partitioned into $p\geq  2$ overlapping sets $I = \I_1 \cup \dots \cup \I_p$. 
For convenience, let us define $\I_m = \{n+1,\dots,n+m\}$, $\Ikell := \Ik \cap \Iell$, $\hIk := \I_k \cup \I_m$ for each $k=1,\dots,p$ and $\hIkell = \Ikell \cup \I_m$. 
Let $n_k := |\Ik|$ and $\hat{n}_k := |\hIk|$. 

Let us now consider a vector of variables $\x = (x_1,\dots,x_n,x_{n+1},\dots,x_{n+m})$, a polynomial 
$f \in \Rbb[\x]$ and a finite subset of polynomials $g=(g_1,\dots,g_r) \subset \Rbb[\x]$ for $r \in \N$. 
\begin{assumption}\label{hyp:ASPOP}
One has $f=\sum_{k=1}^p (f_k + \hat{f_k})$, with $f_k \in \Rbb[\x,\Ik]$, $\hat{f_k} \in \Rbb[\x,\hIk]$, for each $k=1,\dots,p$ and for each $j=1,\dots,r$ there exists $k(j)$ such that $g_j \in \Rbb[x,\hI_{k(j)}]$. 
For each $k = 1,\dots,p$, there exists $j(k)$ such that $g_{j(k)}=\hat{n}_k - \sum_{i \in \hIk} x_i^2$.
\end{assumption}
Under Assumption \ref{hyp:ASPOP} one can partition $J=\{1,\dots,r\}$ into $p$ overlapping sets $J=J_1\cup \dots \cup J_p$ with $J_k := \{j \in J: g_j \in \Rbb[\x,\hIk] \}$. 

Similarly to the hierarchy exploiting correlative sparsity (CS) for polynomial optimization, we now derive a hierarchy exploiting arrow sparsity (AS). 
For each $k=1,\dots,p$, let us define the subset $\hRk \subset \Rbb[\x]$ as follows:
\begin{align}
\label{eq:ASSOS}
\hRk:= \left\{ 
 s_k - \sum_{\ell < k} d_{\ell,k} + \sum_{\ell > k} d_{k,\ell} + c_k : 
 s_k \in \Rbb[x,\hIk], 
 d_{k,\ell} \in  \Rbb[x,\hIkell], 
 c_k \in  \Rbb[x,\I_m]
\right\}.
\end{align}
We also define the subset $\hSigmak = \hRk \cap \Sigma[\x] $ of the SOS cone. 
Let $K = \{\x \in \Rbb^n : g_1(\x) \geq 0, \dots, g_r(\x) \geq 0 \}$. Under Assumption \ref{hyp:ASPOP} one can define the set $K_k := \{\x \in \Rbb^{n_k} :  g_j(\x) \geq 0, j \in J_k \}$. 
We are now able to state our main result. 
\begin{theorem}
\label{th:ASPOP}
Let $f, g$ be as in Assumption \ref{hyp:ASPOP} and suppose that $f$ is positive  on $K = \{\x \in \Rbb^n : g_1(\x) \geq 0, \dots, g_r(\x) \geq 0 \}$. Then $f= h_1 + \dots + h_p$ for some $h_k >0$ on $K_k$. 
\end{theorem}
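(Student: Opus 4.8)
The plan is to argue by induction on the number of blocks $p$, peeling off one block at each step through a two--group separation in which the common arrow variables $\I_m$, shared by every $\hIk$, serve as a universal separating block. First I would exploit compactness: each block carries the ball constraint $g_{j(k)}=\hat n_k-\sum_{i\in\hIk}x_i^2\ge 0$, so $K$ and every $K_k$ are compact and $f>0$ on $K$ upgrades to $f\ge\varepsilon$ on $K$ for some $\varepsilon>0$; hence all fibres and projections below are compact and the fibrewise minima are attained. Writing $F_k:=f_k+\hat f_k\in\Rbb[\x,\hIk]$, Assumption~\ref{hyp:ASPOP} gives $f=\sum_{k=1}^p F_k$ with the $k$-th term supported on $\hIk$, and the goal is to redistribute the $F_k$ into $h_k\in\Rbb[\x,\hIk]$ with $\sum_k h_k=f$ and $h_k>0$ on $K_k$.

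The inductive step is a two--block separation lemma. I would order the blocks so that the overlap of the last block with the union of the earlier ones, $Y:=\bigl(\I_p\cap\bigcup_{\ell<p}\I_\ell\bigr)\cup\I_m$, lies inside a single earlier block $\hat\I_{\ell_0}$ (a running--intersection ordering, available for the path/chordal patterns of the structural applications and automatically compatible with $\I_m\subseteq\hat\I_\ell$ for all $\ell$). Splitting $f=g+h$ with $g:=F_p$ and $h:=\sum_{\ell<p}F_\ell$, I claim there is a polynomial $\phi\in\Rbb[\x,Y]$ with $g-\phi>0$ on $K_p$ and $h+\phi>0$ on the feasible set $K^{(p-1)}$ of the remaining blocks. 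Granting this, set $h_p:=g-\phi$, absorb $\phi$ into $F_{\ell_0}$ (legitimate because $Y\subseteq\hat\I_{\ell_0}$), and apply the induction hypothesis to $h+\phi$, which is positive on $K^{(p-1)}$ and still decomposes over $\I_1,\dots,\I_{p-1}$; the base case $p=1$ is trivial.

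Constructing $\phi$ is the crux. Let $S_p,S_{<p}$ be the projections of $K_p,K^{(p-1)}$ onto the $Y$--coordinates and set $\underline g(y):=\min\{g(\x):\x\in K_p,\ \x|_Y=y\}$ and $\underline h(y):=\min\{h(\x):\x\in K^{(p-1)},\ \x|_Y=y\}$, continuous on the compact sets $S_p,S_{<p}$ by a Berge-type argument. Since the only coordinates shared by block $p$ and the earlier blocks are those of $Y$, one has $S_p\cap S_{<p}=\pi_Y(K)$, and there $\underline g(y)+\underline h(y)=\min_{\x\in K,\ \x|_Y=y}f(\x)\ge\varepsilon$, so the interval $(-\underline h(y),\underline g(y))$ is nonempty of length $\ge\varepsilon$; on $S_p\setminus S_{<p}$ (resp.\ $S_{<p}\setminus S_p$) only the bound $\phi<\underline g$ (resp.\ $\phi>-\underline h$) is needed. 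I would first glue these requirements into a single continuous function lying strictly between $-\underline h$ and $\underline g$ (a Tietze/Urysohn construction on compact sets, using the uniform gap $\varepsilon$ on the overlap), and then pass to a polynomial $\phi$ by Stone--Weierstrass, the strict margins guaranteeing that both inequalities survive the approximation.

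The main obstacle is exactly this construction: producing one polynomial in the shared variables squeezed between $-\underline h$ and $\underline g$ uniformly over the overlap while respecting the one--sided bounds where only one feasible set projects, with the gap possibly narrowing near the boundary of $\pi_Y(K)$ (where the fibrewise minima are only semicontinuous). This is the polynomial analogue of gluing consistent marginal measures along a junction tree, which is why a running--intersection ordering is needed to close the induction. Finally, tracking the accumulated corrections shows that each $\phi$ contributes terms supported on a pairwise intersection $\hIkell$ or on $\I_m$, so that the resulting pieces acquire the transfer form $F_k-\sum_{\ell<k}d_{\ell,k}+\sum_{\ell>k}d_{k,\ell}+c_k$ appearing in the definition of $\hRk$.
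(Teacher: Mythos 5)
First, a caveat: the paper itself offers \emph{no} proof of this theorem --- the entire section containing it is disabled in the source (wrapped in an \texttt{\string\if\{...\}\string\fi} block) and ends immediately after the statement --- so there is nothing to compare your argument against and I can only judge it on its own terms. Your strategy is the classical Grimm--Netzer--Schweighofer route (a two-block separation lemma interposing a polynomial $\phi$ in the shared variables between the fibrewise minima $-\underline h$ and $\underline g$, followed by induction on the number of blocks), which is the right family of ideas for statements of this type, and your key identity $\underline g(y)+\underline h(y)=\min\{f(\x):\x\in K,\ \x|_Y=y\}$ is indeed valid here because every constraint lives in a single block $\hIk$, so the feasible completions of a fixed $y$ factor into a product.

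The genuine gap is the running-intersection ordering that you introduce only parenthetically. The standing assumption grants that every block contains $\I_m$ and that the $\I_k$ cover $\I$; it does \emph{not} guarantee that $Y=\bigl(\I_p\cap\bigcup_{\ell<p}\I_\ell\bigr)\cup\I_m$ is contained in a single earlier block $\hat{\I}_{\ell_0}$. Without that containment the correction $\phi\in\Rbb[\x,Y]$ cannot be absorbed into one summand of $h=\sum_{\ell<p}F_\ell$, the function $h+\phi$ no longer has the block structure required by the inductive hypothesis, and the induction does not close. This cannot be sidestepped by shrinking the support of $\phi$ (say to $\I_m$, which \emph{is} common to all blocks): the lower bound $\underline g+\underline h\ge\varepsilon$ uses that the two fibrewise minimizers agree on \emph{all} of $Y$, and it fails if they are only required to agree on a subset; and the common block $\I_m$ does not imply the running intersection property for the $\I_k$ themselves (the storey-wise or cyclic partitions in the paper's applications can violate it). So either the running intersection property must be added to the hypotheses --- in which case the result is essentially the known redistribution lemma of Grimm, Netzer and Schweighofer --- or a genuinely different gluing argument is needed. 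A secondary, repairable, inaccuracy: $\underline g$ and $\underline h$ are only lower semicontinuous in general (Berge's theorem requires lower hemicontinuity of the correspondence $y\mapsto\{\x\in K_p:\x|_Y=y\}$, which can fail for semialgebraic fibres); the standard fix is to interpose a continuous function between the u.s.c.\ function $-\underline h$ and the l.s.c.\ function $\underline g$ via the Kat\v{e}tov--Tong insertion theorem, using the uniform gap $\varepsilon$ on the overlap to preserve strictness before invoking Stone--Weierstrass. Finally, note that your closing claim --- that the resulting $h_k$ automatically land in the transfer form defining $\hRk$ --- again relies on each correction being supported on a single pairwise intersection, i.e.\ on the same running-intersection structure you have not established.
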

}\fi

\section{Arrow decomposition and the moment sum-of-squares hierarchy }
\label{sec:ADMSOS}

This section is devoted to exploiting arrow-type structure in polynomial optimization problems. Thus, in what follows, we consider the optimization problem defined in \eqref{POP-PMI} such that $\bm{G} \in \mathbb{S}^{n+m}[\bm{x}]$ has an arrow shape, i.e.,
\begin{equation}
\bm{G}(\bm{x})= \sum_{k=1}^p \bm{G}_k(\bm{x})+ \begin{bmatrix}
    \bm{0} & \bm{0} \\
    \bm{0} & \bm{\Gamma}(\bm{x})
\end{bmatrix},
\label{arrow-structure-PMI}
\end{equation}
where for all $k \in [p]$, $\bm{G}_k(\bm{x})=\begin{bmatrix}
    \A_k(\bm{x}) & \B_k(\bm{x}) \\
    \B_k(\bm{x})^T & \bm{0}
\end{bmatrix} $ with $\bm{A}_k(\bm{x}) \in \mathbb{S}^n[\bm{x}]$, $\bm{B}_k(\bm{x})\in \mathbb{R}^{n \times m}[\bm{x}]$, and $\bm{\Gamma}(\bm{x}) \in \mathbb{S}^m[\bm{x}]$. Similarly to Section \ref{sec: ADLMI}, we make the following assumption on the matrices $\bm{A}_k(\bm{x})$ and $\bm{\Gamma}(\bm{x})$: 
\begin{assumption}[Positive semidefiniteness\label{Assumption-5}]
For all $\bm{x} \in \mathcal{K}$ and $ k \in [p]$, $\A_k(\bm{x}) \succeq 0$ and $\bm{\Gamma}(\bm{x}) \succeq 0$.
\end{assumption}

In this section, we present two different approaches for combining the mSOS and AD method:
\begin{itemize}
\item AD prior to the mSOS hierarchy (Section~\ref{subsec:prior}): AD is applied directly to the original polynomial optimization problem. Although this approach has little practical interest, we include it as it provides an insightful link between the main POP and the second approach.
\item AD posterior to the mSOS hierarchy (Section~\ref{subsec:posterior}): AD is applied to the relaxations within the mSOS hierarchy. This is the main focus of the paper.
\end{itemize}

\subsection{AD prior to the mSOS hierarchy}\label{subsec:prior}
First, notice that it follows from the proof of Theorem \ref{Kocvara-AD-PSD} that if, for all $\bm{x}$, the matrix $\bm{G}(\bm{x})$ satisfies the assumptions for which Theorem \ref{Kocvara-AD-PSD} holds, then the additional matrices $\bm{D}_k $ and $\bm{C}_k$ can be expressed as rational functions in $\bm{x}$. Indeed, from Equation \eqref{AD-proof-equation-advariables}, we would need to find a matrix $\bm{D}$ such that $\bm{D}_k=\bm{A}_k(\bm{x})\bm{A}(\bm{x})^\dagger\bm{B}(\bm{x})-\bm{B}_k(\bm{x})$ for all $k \in [p]$. However, this can not be exploited easily, because we would need to symbolically evaluate $\bm{A}(\bm{x})^\dagger\bm{B}(\bm{x})$. Instead, we introduce linear variables and require that their values match the fractional expressions at the optimum solution $\bm{x}^*$ of Problem \eqref{POP-PMI}. More specifically, set $\bm{C}=\lbrace \bm{C}_k\rbrace_{k \in [p-1]}$ and consider the following problem 
\begin{equation}
\tag{POP-AD}\begin{aligned}
  p_{\text{AD}}=\underset{\substack{\x, \bm{D}, \bm{C}} }{\inf}& p(\x),&  \\
 \text{s.t. }& \bm{G}_k(\bm{x})+ \begin{pmatrix}
    \bm{0} & \bm{\Pi}_k\bm{D} \\
    \bm{D}^T\bm{\Pi}_k^T & \bm{C}_k
    \end{pmatrix} \succeq 0,\ \ k \in [p-1],\\
    & \bm{G}_p(\bm{x})+ \begin{pmatrix}
    \bm{0} & \bm{\Pi}_p\bm{D} \\
    \bm{D}^T\bm{\Pi}_p^T & \bm{\Gamma}(\bm{x})-\sum_{k=1}^{p-1}\bm{C}_k
    \end{pmatrix} \succeq 0,
\end{aligned}
\label{POP_AD1}
\end{equation}
\begin{proposition}
Under Assumptions \ref{assumption-archimedean} and \ref{Assumption-5}, we have $p_{\text{AD}}=p^*$.
   \label{prop-addvariables-linear}
\end{proposition}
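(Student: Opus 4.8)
The plan is to prove the value identity by showing that the feasible regions of \eqref{POP-PMI} and \eqref{POP_AD1}, once projected onto the $\x$-coordinates, coincide; since the objective $p(\x)$ depends only on $\x$ and not on the auxiliary variables, the two optimal values are then automatically equal. Write $\mathcal{K} = \{\x : \bm{G}(\x) \succeq 0\}$ for the feasible set of \eqref{POP-PMI} and let $\widetilde{\mathcal{F}}$ denote the set of triples $(\x,\bm{D},\bm{C})$ feasible for \eqref{POP_AD1}. It suffices to establish that the projection $\pi_{\x}(\widetilde{\mathcal{F}}) = \mathcal{K}$, because this gives at once
\[
p_{\mathrm{AD}} = \inf_{(\x,\bm{D},\bm{C}) \in \widetilde{\mathcal{F}}} p(\x) = \inf_{\x \in \pi_{\x}(\widetilde{\mathcal{F}})} p(\x) = \inf_{\x \in \mathcal{K}} p(\x) = p^*.
\]

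For the inclusion $\mathcal{K} \subseteq \pi_{\x}(\widetilde{\mathcal{F}})$, I would fix an arbitrary $\x \in \mathcal{K}$ and apply Theorem \ref{Kocvara-AD-PSD} \emph{pointwise} to the numerical matrix $\bm{G}(\x)$. The key observation is that Assumption \ref{Assumption-5} is precisely the pointwise analogue of Assumption \ref{Assumption-4}: for $\x \in \mathcal{K}$ we have $\bm{A}_k(\x) \succeq 0$ for every $k$ and $\bm{\Gamma}(\x) \succeq 0$, so $\bm{G}(\x)$ is an admissible arrow-type matrix in the sense of Theorem \ref{Kocvara-AD-PSD}. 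That theorem then produces matrices $\bm{D}_{k,\ell}(\x)$ and $\bm{C}_k(\x)$, $k \in [p]$, for which the decomposed blocks \eqref{AD-equation-LMI} are positive semidefinite. Encoding the dependencies \eqref{additional-variables-dependencies} through the selector matrices $\bm{\Pi}_k$ (Lemma~\ref{compacte-form-add-variables}) yields a single $\bm{D}$ with $\bm{D}_k = \bm{\Pi}_k \bm{D}$, and setting $\bm{C} = \{\bm{C}_k(\x)\}_{k \in [p-1]}$ makes the final block of \eqref{POP_AD1} equal to $\bm{\Gamma}(\x) - \sum_{k=1}^{p-1}\bm{C}_k(\x) = \bm{C}_p(\x) \succeq 0$ by \eqref{equation-compliances-advariables-balance}. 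Hence $(\x,\bm{D},\bm{C}) \in \widetilde{\mathcal{F}}$ and $\x \in \pi_{\x}(\widetilde{\mathcal{F}})$.

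For the reverse inclusion $\pi_{\x}(\widetilde{\mathcal{F}}) \subseteq \mathcal{K}$, I would take any feasible triple $(\x,\bm{D},\bm{C})$ and sum the $p$ positive semidefinite blocks appearing in \eqref{POP_AD1}. The off-diagonal blocks $\bm{\Pi}_k\bm{D}$ represent the matrices $\bm{D}_k$ of \eqref{additional-variables-dependencies}, whose sum $\sum_{k=1}^p \bm{D}_k$ telescopes to $\bm{0}$, while the diagonal blocks sum to $\sum_{k=1}^{p-1}\bm{C}_k + \bigl(\bm{\Gamma}(\x) - \sum_{k=1}^{p-1}\bm{C}_k\bigr) = \bm{\Gamma}(\x)$. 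What remains is $\sum_{k=1}^p \bm{G}_k(\x) + \text{diag}(\bm{0},\bm{\Gamma}(\x)) = \bm{G}(\x)$, which, being a sum of positive semidefinite matrices, is positive semidefinite; thus $\x \in \mathcal{K}$. This is exactly the ``reversed assertion'' invoked at the end of the proof of Theorem \ref{Kocvara-AD-PSD}, now read at a fixed but arbitrary $\x$.

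I expect the main obstacle to be conceptual rather than computational: one must justify that Theorem \ref{Kocvara-AD-PSD}, which concerns a single numerical matrix, may legitimately be invoked separately at each $\x \in \mathcal{K}$ \emph{without} requiring the decomposition data $\bm{D},\bm{C}$ to depend continuously (let alone polynomially) on $\x$. This is sound precisely because \eqref{POP_AD1} treats $\bm{D}$ and $\bm{C}$ as free decision variables rather than as prescribed functions of $\x$; the fractional expressions $\bm{A}_k(\x)\bm{A}(\x)^\dagger\bm{B}(\x) - \bm{B}_k(\x)$ from the opening discussion need only be matched at the optimizer, not represented symbolically. The role of Assumption \ref{assumption-archimedean} is auxiliary here: together with the compactness of $\mathcal{K}$ it keeps \eqref{POP_AD1} within the same well-posed framework and guarantees that $p^*$ is a genuine minimum, but the identity $p_{\mathrm{AD}} = p^*$ itself follows already from the set equality $\pi_{\x}(\widetilde{\mathcal{F}}) = \mathcal{K}$ established above.
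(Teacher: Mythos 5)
Your proposal is correct and follows essentially the same route as the paper: both directions rest on applying Theorem \ref{Kocvara-AD-PSD} pointwise to the numerical matrix $\bm{G}(\x)$, once to decompose a feasible $\x \in \mathcal{K}$ and once to reassemble $\bm{G}(\x) \succeq 0$ from the decomposed blocks. Your phrasing via equality of the projected feasible sets is a slightly cleaner packaging of the paper's two-inequality argument (and avoids presupposing that minimizers are attained), but the underlying idea is identical.
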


\begin{proof}
If $(\bm{x}^*,\bm{D}^*, \bm{C}^*)$ is a solution to \eqref{POP_AD1} then according to Theorem \ref{Kocvara-AD-PSD}, $\bm{x}^*$ is feasible to \eqref{POP-PMI} so that $p_{\text{AD}} \geq p^*$. Conversely, if $\bm{x}^*$ is a solution to \eqref{POP-PMI}, then by Theorem \ref{Kocvara-AD-PSD} we can find $\bm{D}^*, \bm{C}^*$ such that Equation \eqref{AD-equation-LMI} holds and
$ \widetilde{\bm{G}}_k(\bm{x}^*) \succeq 0$, for all $ k \in [p]$. Thus, $p_{\text{AD}} \leq p^*$.
\end{proof}

Now, we apply AD directly to the problem \eqref{POP-PMI}, then apply the mSOS naively on the resulting problem while taking into account the additional variables when constructing the pseudo-moment sequence. More precisely, we consider the relaxations
\begin{equation}
\tag{ADmSOS$_{1}$}\begin{aligned}
  p_{\text{AD},r}^{\text{prior}}=&\underset{\substack{\widetilde{\bm{y}}=(\bm{y}, \bm{y}_{\bm{D}}, \bm{y}_{\bm{C}}})}{\min} \mathscr{L}_{\bm{y}}\left(p \right),  \\
 \text{s.t. }& y_{\bm{0}}=1, \\
 & \bm{M}_r(\tilde{\bm{y}}) \succeq 0, \\
 &\bm{M}_{r-r_G}(\bm{G}_{k}\tilde{\bm{y}})+ \bm{M}_{r-r_G}\left( \begin{bmatrix}
   \bm{0} & \bm{\Pi}_k\bm{D} \\
  \bm{D}^T\bm{\Pi}_k^T & \bm{C}_k
\end{bmatrix}\tilde{\bm{y}} \right) \succeq 0,\ \  k \in [p-1], \\
 &\bm{M}_{r-r_G}(\bm{G}_{p}\tilde{\bm{y}})+ \bm{M}_{r-r_G}\left( \begin{bmatrix}
   \bm{0} & \bm{\Pi}_p\bm{D} \\
  \bm{D}^T\bm{\Pi}_p^T & \bm{\Gamma}(\bm{x})-\sum_{k=1}^{p-1}\bm{C}_k(\x)
\end{bmatrix}\tilde{\bm{y}} \right) \succeq 0,
\end{aligned}
\label{AD-prior-mSOS}
\end{equation}
where 
%
$\tilde{\bm{y}}$ is the pseudo-moment sequence associated with the variables $(\bm{x},\bm{D},\bm{C})$, and $r \geq r_{\text{min}}$ with $r_{\text{min}}$ defined in Section \ref{sec:mSOS}.
Under Assumptions \ref{assumption-archimedean} and \ref{Assumption-5}, and according to Proposition \ref{prop-addvariables-linear} and Theorem \ref{mSOS-convergence}, we can show that $p_{\text{AD},r}^{\text{prior}}=p_r$ for any relaxation order $r \geq r_{\text{min}}$. However, the sizes of the moment and localizing matrices in \eqref{AD-prior-mSOS} will typically be larger than in \eqref{mSOS-finite}. 
In order to illustrate this, let $\tilde{\bm{x}}=(\bm{x},\bm{D},\bm{C})$ and $\bm{b}_r(\tilde{\bm{x}})$ the monomial basis associated to $\tilde{\bm{x}}$ at the relaxation order $r$. 
We denote by $n_{\bm{D}}$ and $n_{\bm{C}}$ the number of distinct the component-wise additional variables $\bm{D}$ and $\bm{C}$ respectively. 
Recall that $\left\lvert\bm{b}_r(\bm{x})\right\rvert= \binom{n+r}{n}$. 
We have $\frac{\vert \bm{b}_{r+1}(\bm{x}) \vert}{\vert \bm{b}_{r}(\bm{x}) \vert}=1+\frac{n_x}{r+1}$ and $\frac{\vert \bm{b}_{r+1}(\tilde{\bm{x}}) \vert}{\vert \bm{b}_{r}(\tilde{\bm{x}}) \vert}=1+\frac{n_x+n_{\bm{C}}+n_{\bm{D}}}{r+1}.$ 
This shows that, for a relatively large $n_{\bm{C}}+n_{\bm{D}}$, the size of the monomial basis $\bm{b}_{r}(\tilde{\bm{x}})$ used for building the moment and localizing matrices grows much faster than the basis $\bm{b}_{r}(\bm{x})$ as $r$ increases.

Moreover, let $\bm{y}$ be a pseudo-moment vector defined for \eqref{mSOS-finite}, and $\widetilde{\bm{G}}_k(\bm{x})=\bm{G}_k(\bm{x})+\begin{bmatrix}
   \bm{0} & \bm{\Pi}_k\bm{D} \\
  \bm{D}^T\bm{\Pi}_k^T & \bm{C}_k
\end{bmatrix}$. The following result shows the conditions under which the matrix $\bm{M}_{r-r_{\bm{G}}}(\widetilde{\bm{G}}_k\tilde{\bm{y}})$ could have smaller size than $\bm{M}_{r-r_{\bm{G}}}(\bm{G}\bm{y})$.
\begin{proposition}
Let $p \in [p]$. We have $\vert  \bm{M}_{r-r_{\bm{G}}}(\widetilde{\bm{G}}_k\tilde{\bm{y}}) \vert \leq \vert  \bm{M}_{r-r_{\bm{G}}}(\bm{G}\bm{y}) \vert $ if and only if 
    \begin{equation}
\prod_{i=1}^{n_D+n_C}\left(1+\frac{r-r_{\bm{G}}}{n_x+i}\right) \leq \frac{\vert \bm{G} \vert}{\vert\widetilde{\bm{G}}_k\vert}.
        \label{inequation-proposition-complexity-AD-prior}
    \end{equation}
\end{proposition}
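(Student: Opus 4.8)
The plan is to reduce the claimed inequality to an identity between binomial coefficients, exploiting the block structure of localizing matrices. Recall from Section~\ref{sec:notation} that $\bm{M}_{r-r_{\bm{G}}}(\bm{G}\bm{y}) = \mathscr{L}_{\bm{y}}(\bm{G}(\bm{x}) \otimes \bm{b}_{r-r_{\bm{G}}}(\bm{x})^T\bm{b}_{r-r_{\bm{G}}}(\bm{x}))$, so it consists of $\vert\bm{G}\vert \times \vert\bm{G}\vert$ blocks, each of size $\vert \bm{b}_{r-r_{\bm{G}}}(\bm{x})\vert \times \vert \bm{b}_{r-r_{\bm{G}}}(\bm{x})\vert$. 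Hence its overall dimension factorizes as
$$\vert \bm{M}_{r-r_{\bm{G}}}(\bm{G}\bm{y}) \vert = \vert \bm{G} \vert \cdot \vert \bm{b}_{r-r_{\bm{G}}}(\bm{x}) \vert = \vert \bm{G} \vert \cdot \binom{n_x + r - r_{\bm{G}}}{n_x}.$$
The same reasoning applied to $\widetilde{\bm{G}}_k$ together with the extended moment sequence $\tilde{\bm{y}}$ associated with $\tilde{\bm{x}}=(\bm{x},\bm{D},\bm{C})$, which involves $n_x + n_{\bm{D}} + n_{\bm{C}}$ variables, gives
$$\vert \bm{M}_{r-r_{\bm{G}}}(\widetilde{\bm{G}}_k\tilde{\bm{y}}) \vert = \vert \widetilde{\bm{G}}_k \vert \cdot \vert \bm{b}_{r-r_{\bm{G}}}(\tilde{\bm{x}}) \vert = \vert \widetilde{\bm{G}}_k \vert \cdot \binom{n_x + n_{\bm{D}} + n_{\bm{C}} + r - r_{\bm{G}}}{n_x + n_{\bm{D}} + n_{\bm{C}}}.$$

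Second, I would substitute these two expressions into the target inequality $\vert \bm{M}_{r-r_{\bm{G}}}(\widetilde{\bm{G}}_k\tilde{\bm{y}}) \vert \leq \vert \bm{M}_{r-r_{\bm{G}}}(\bm{G}\bm{y}) \vert$ and, since all factors are strictly positive, rearrange it equivalently as
$$\frac{\binom{n_x + n_{\bm{D}} + n_{\bm{C}} + r - r_{\bm{G}}}{n_x + n_{\bm{D}} + n_{\bm{C}}}}{\binom{n_x + r - r_{\bm{G}}}{n_x}} \leq \frac{\vert \bm{G} \vert}{\vert \widetilde{\bm{G}}_k \vert}.$$
It then remains to evaluate the ratio on the left. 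Writing $d := r - r_{\bm{G}} \geq 0$ and expanding the binomial coefficients as factorials, the common $d!$ cancels and one is left with $\tfrac{(n_x+n_{\bm{D}}+n_{\bm{C}}+d)!}{(n_x+d)!} \cdot \tfrac{n_x!}{(n_x+n_{\bm{D}}+n_{\bm{C}})!}$; recognizing the first factor as the rising product $\prod_{i=1}^{n_{\bm{D}}+n_{\bm{C}}}(n_x + d + i)$ and the second as $\prod_{i=1}^{n_{\bm{D}}+n_{\bm{C}}}(n_x+i)^{-1}$ yields
$$\frac{\binom{n_x + n_{\bm{D}} + n_{\bm{C}} + d}{n_x + n_{\bm{D}} + n_{\bm{C}}}}{\binom{n_x + d}{n_x}} = \prod_{i=1}^{n_{\bm{D}}+n_{\bm{C}}}\frac{n_x + d + i}{n_x + i} = \prod_{i=1}^{n_{\bm{D}}+n_{\bm{C}}}\left(1 + \frac{r - r_{\bm{G}}}{n_x + i}\right),$$
which is exactly the left-hand side of \eqref{inequation-proposition-complexity-AD-prior}. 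Substituting this back establishes the equivalence.

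There is no genuine obstacle here: the argument is a finite combinatorial computation, and the ``if and only if'' is automatic because the matrix dimensions are positive integers and the rearrangement uses only multiplication and division by positive quantities. The only points requiring mild care are the telescoping of factorials into the two products indexed by $i = 1, \dots, n_{\bm{D}} + n_{\bm{C}}$, and the bookkeeping that the extended basis $\bm{b}_{r-r_{\bm{G}}}(\tilde{\bm{x}})$ is built on exactly $n_x + n_{\bm{D}} + n_{\bm{C}}$ variables, so that the number of additional variables entering the basis is precisely $n_{\bm{D}} + n_{\bm{C}}$, matching the range of the product.
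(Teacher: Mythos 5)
Your argument is correct and follows essentially the same route as the paper's proof: both reduce the dimension comparison to the inequality $\vert \widetilde{\bm{G}}_k\vert \binom{n_x+n_{\bm{D}}+n_{\bm{C}}+r-r_{\bm{G}}}{n_x+n_{\bm{D}}+n_{\bm{C}}} \leq \vert \bm{G}\vert \binom{n_x+r-r_{\bm{G}}}{n_x}$ and then use the identity $\binom{n_x+n_{\bm{D}}+n_{\bm{C}}+r-r_{\bm{G}}}{n_x+n_{\bm{D}}+n_{\bm{C}}}=\binom{n_x+r-r_{\bm{G}}}{n_x}\prod_{i=1}^{n_{\bm{D}}+n_{\bm{C}}}\bigl(1+\tfrac{r-r_{\bm{G}}}{n_x+i}\bigr)$. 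Your version merely spells out the factorial telescoping that the paper states without detail.
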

\begin{proof}
On the one hand, we have
  $\vert  \bm{M}_{r-r_{\bm{G}}}(\widetilde{\bm{G}}_k\tilde{\bm{y}}) \vert \leq \vert  \bm{M}_{r-r_{\bm{G}}}(\bm{G}\bm{y}) \vert   $
is equivalent to 
$$  \vert \widetilde{\bm{G}}_k\vert  \binom{n_x+n_{\bm{D}}+n_{\bm{C}}+r-r_{\bm{G}}}{n_x+n_{\bm{D}}+n_{\bm{C}}}  \leq \vert \bm{G}\vert \binom{n_x+r-r_{\bm{G}}}{n_x}. 
$$ 
 On the other hand, we have $\binom{n_x+n_{\bm{D}}+n_{\bm{C}}+r-r_{\bm{G}}}{n_x+n_{\bm{D}}+n_{\bm{C}}}=\binom{n_x+r-r_{\bm{G}}}{n_x} \prod_{i=1}^{n_D+n_C}\frac{n_x+r-r_{\bm{G}}+i}{n_x+i}.$ 
 We obtain  $\prod_{i=1}^{n_D+n_C}\frac{n_x+r-r_{\bm{G}}+i}{n_x+i}=\prod_{i=1}^{n_D+n_C}\left(1+\frac{r-r_{\bm{G}}}{n_x+i}\right) \leq \frac{\vert \bm{G}\vert}{\vert \widetilde{\bm{G}}_k\vert}.$  
\end{proof}
The inequality \eqref{inequation-proposition-complexity-AD-prior} holds if $r-r_{\bm{G}}=0$ or ($r-r_{\bm{G}}>0$ and $n_{\bm{D}}=n_{\bm{C}}=0$) or ($n_{\bm{D}}+n_{\bm{C}}>0$ and $0<r-r_{\bm{G}} \ll n_x$). In particular, if we have significantly large number of variables $ n_{\bm{D}}+n_{\bm{C}}$, then the previous inequality will more likely not hold. These conditions render solving numerically \eqref{AD-prior-mSOS} 
not very interesting, unless the decomposition reveals additional sparse structure in the POP such as correlative sparsity \cite{magron2023sparse}. However, this aspect lies beyond the scope of the present work.

%

\begin{example}
   Let $\bm{x} \in \mathbb{R}^3_{+}$ and 
   $\bm{G}(\bm{x})=\begin{bmatrix}
       2x_1^2 & x_1 & 0 & b_1 \\
       x_1 & x_1^2+2x_2^2 & x_2 & b_2 \\
       0 & x_2 & x_2^2 & b_3 \\
       b_1 & b_2 & b_3 & x_3
   \end{bmatrix}.$
   It can be decomposed as 
   $$\widetilde{\bm{G}}_1(\bm{x})=\begin{bmatrix}
       2x_1^2 & x_1 & b_1 \\ x_1 & x_1^2 & b_2+d_{1,2}^{(1)} \\ b_1 & b_2+d_{1,2}^{(1)} & c_1
   \end{bmatrix},\;\widetilde{\bm{G}}_2(\bm{x})=\begin{bmatrix}
       2x_2^2 & x_2 & -d_{1,2}^{(1)} \\ x_2 & x_2^2 & b_3 \\ -d_{1,2}^{(1)} & b_3 & x_3-c_1
   \end{bmatrix}.$$
   For $r=2$, we have $\vert \bm{M}_{r-r_{\bm{G}}}(\bm{G}\bm{y})\vert=16$ and $\vert \bm{M}_{r-r_{\bm{G}}}(\widetilde{\bm{G}}_1\bm{\tilde{y}})\vert=\vert \bm{M}_{r-r_{\bm{G}}}(\widetilde{\bm{G}}_2\bm{\tilde{y}})\vert=15$. Then for $r=3$, we get $\vert \bm{M}_{r-r_{\bm{G}}}(\bm{G}\bm{y})\vert=40$ and $\vert \bm{M}_{r-r_{\bm{G}}}(\widetilde{\bm{G}}_1\bm{\tilde{y}})\vert=\vert \bm{M}_{r-r_{\bm{G}}}(\widetilde{\bm{G}}_2\bm{\tilde{y}})\vert=45$.
\end{example}

\subsection{AD posterior to the mSOS hierarchy}\label{subsec:posterior}

The approach here is to apply the AD on the relaxations \eqref{mSOS-infinite} and derive its tractable truncations. Specifically, we provide new necessary and sufficient conditions for a sequence $\bm{y}=(y_{\bm{\alpha}})_{\bm{\alpha}\in \mathbb{N}^{n_x}}$ to have a representing measure on $\mathcal{K}=\lbrace  \bm{x} \in \mathbb{R}^{n_x}: \G(\x) \succeq 0\rbrace$ that exploit the arrow structure of $\G$. These new conditions have two advantages: 
\begin{itemize}
    \item it avoids including the component-wise auxiliary variables introduced by the decomposition (i.e., those in $\D$ and $\C$) in the monomial basis, as was required in the previous section, and thus leading to smaller localizing matrix inequalities; 
    \item when truncated, they give tighter lower bounds to \eqref{POP-PMI} compared to the standard approach \eqref{mSOS-finite}.
\end{itemize}

We consider again \eqref{POP-PMI} such that the matrix $\bm{G(x)} \in \Sbb^{n+m}[\x]$ has an arrow structure as in \eqref{arrow-structure-PMI}. Let $d \in \mathbb{N}$ be an arbitrary degree, $L_d=\vert \bb_d(\x)\vert $ and $\y \in \Rbb^{L_{2d}}$ a vector. By Equation \eqref{arrow-structure-PMI}, the matrix $\M_{d}(\G \y) $ can be written as
\begin{equation}
\M_{d}(\G \y)=\sum_{k=1}^p\bm{M}_{d}(\bm{G}_{k} \bm{y}) + \begin{bmatrix}
    \bm{0} & \bm{0} \\ \bm{0} & \widehat{\bm{\Gamma}}_d(\bm{y})
\end{bmatrix},
    \label{definition-Arrow-structure-localizing-matrix}
\end{equation}
with 
$$\bm{M}_{d}(\bm{G}_{k} \bm{y})=\begin{bmatrix}
    \widehat{\bm{A}}_{d,k}(\bm{y}) & \widehat{\bm{B}}_{d,k}(\bm{y}) \\
    \widehat{\bm{B}}_{d,k}(\bm{y})^T & \bm{0}
\end{bmatrix},$$
 and $\widehat{\bm{A}}_{d,k}(\bm{y}) =\mathscr{L}_{\y}\left(\A_k(\x) \otimes \bb_{d}(\x)^T\bb_{d}(\x)\right)$, $\widehat{\bm{B}}_{d,k}(\bm{y}) =\mathscr{L}_{\y}\left(\B_k(\x) \otimes \bb_{d}(\x)^T\bb_{d}(\x)\right)$, 
    and $\widehat{\bm{\Gamma}}_d(\bm{y}) =\mathscr{L}_{\y}\left(\bm{\Gamma}(\x) \otimes \bb_{d}(\x)^T\bb_{d}(\x)\right). $

We denote by $(\widehat{\mathcal{I}}_k)_{k \in [p]}$ the index set of the matrices $ \widehat{\bm{A}}_{d,k}(\bm{y})$, that can be defined explicitly by  \begin{equation}
\widehat{\mathcal{I}}_k=\left\lbrace (i_k-1) L_d +i_{b}: (i_k,i_b) \in \mathcal{I}_k \times [ L_d ] \right\rbrace,
    \label{definition-index-set-AD-posterior}
\end{equation} 
where the index $i_b$ enumerates the element in the monomial basis $\bm{b}_d(\bm{x})$. 


Notice that the block sparsity of the matrix $\M_d(\G\y)$ is the same as the sparsity of the matrix $\G(\x)$. 

Furthermore, we consider the matrices $\widehat{\bm{\Pi}}_k$ defined in \eqref{matrix-Pi} for the sets $\widehat{\mathcal{I}}_k$. We establish in Lemma \ref{lemma-correspondance-sigmas} ( Appendix \ref{appendix-lemma}) a direct link between the matrix $\bm{\Pi}_k$ associated to $\mathcal{I}_k$, and the matrix $\widehat{\bm{\Pi}}_k$ associated to $\widehat{\mathcal{I}}_k$ via the equation $\widehat{\bm{\Pi}}_k =\bm{\Pi}_k \otimes   \bm{I}_{L_{d}}$. 

The following result is an arrow decomposed version of Theorem \ref{putinar}.
\begin{theorem}
 Suppose that Assumptions \ref{assumption-archimedean} and \ref{Assumption-5} hold and let $\bm{y}=(y_{\bm{\alpha}})_{\bm{\alpha}\in \mathbb{N}^{n_x}}$ be a pseudo-moment vector. The following statements are equivalents: 
 \begin{enumerate}
     \item $\bm{y}$ has a representing measure supported on $\mathcal{K}$
     \item For all $d \in \mathbb{N}$, $\bm{M}_d(\bm{y}) \succeq 0$ and there exists matrices $ \widehat{\bm{D}}_d \in \mathbb{R}^{L_dn_{\mathcal{I}} \times L_d m}$ and $ \widehat{\bm{C}}_{d,k} \in \mathbb{S}^{mL_d}$ for all $k \in [p]$, such that 
     \begin{equation}
     \begin{aligned}     
       \M_{d}(\G_{k} \y)+ \begin{bmatrix}
   \bm{0} & (\bm{\Pi}_k \otimes \bm{I}_{L_{d}})\widehat{\bm{D}}_d \\
 \widehat{\bm{D}}_d^T (\bm{\Pi}_k \otimes \bm{I}_{L_{d}})^T & \widehat{\bm{C}}_{d,k}
\end{bmatrix} &\succeq 0, k \in [p-1], \\ 
 \M_{d}(\G_{p} \y)+ \begin{bmatrix}
   \bm{0} & (\bm{\Pi}_p \otimes \bm{I}_{L_{d}})\widehat{\bm{D}}_d \\
 \widehat{\bm{D}}_d^T (\bm{\Pi}_p \otimes \bm{I}_{L_{d}})^T & \widehat{\bm{\Gamma}}_d(\bm{y})-\sum_{k=1}^{p-1}\widehat{\bm{C}}_{d,k}
\end{bmatrix} &\succeq 0.
  \end{aligned}
       \label{condition-rank-def-moment}
   \end{equation}  
   
 \end{enumerate}
 \label{AD-putinar}
\end{theorem}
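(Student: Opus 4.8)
The plan is to reduce Theorem~\ref{AD-putinar} to a combination of the matrix version of Putinar's theorem (Theorem~\ref{putinar}) and the arrow decomposition result (Theorem~\ref{Kocvara-AD-PSD}), applied at each fixed degree $d$ to the localizing matrix $\bm{M}_d(\bm{G}\bm{y})$. The key structural observation is Equation~\eqref{definition-Arrow-structure-localizing-matrix}: for every fixed $d$, the matrix $\bm{M}_d(\bm{G}\bm{y})$ is itself an arrow-type matrix in the sense of \eqref{definition-arrow-matrix}, with top-left summands $\widehat{\bm{A}}_{d,k}(\bm{y})$, off-diagonal blocks $\widehat{\bm{B}}_{d,k}(\bm{y})$, bottom-right block $\widehat{\bm{\Gamma}}_d(\bm{y})$, and with the blown-up index sets $\widehat{\mathcal{I}}_k$ from \eqref{definition-index-set-AD-posterior}. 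By Lemma~\ref{lemma-correspondance-sigmas}, the selection matrices attached to $\widehat{\mathcal{I}}_k$ are exactly $\widehat{\bm{\Pi}}_k = \bm{\Pi}_k \otimes \bm{I}_{L_d}$, so the decomposition \eqref{condition-rank-def-moment} is precisely the decomposition \eqref{compact-formulation-AD-SDP} of Theorem~\ref{Kocvara-AD-PSD} applied to $\bm{M}_d(\bm{G}\bm{y})$, with $\widehat{\bm{D}}_d$ and $\widehat{\bm{C}}_{d,k}$ playing the roles of $\bm{D}$ and $\bm{C}_k$.

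For the implication $(1)\Rightarrow(2)$, I would fix $d$ and use the representing measure $\mu$ to write each localizing block as an integral, for instance $\widehat{\bm{A}}_{d,k}(\bm{y}) = \int_{\mathcal{K}} \bm{A}_k(\bm{x}) \otimes \bm{b}_d(\bm{x})^T \bm{b}_d(\bm{x})\, d\mu$. Since $\bm{A}_k(\bm{x}) \succeq 0$ on $\mathcal{K}$ by Assumption~\ref{Assumption-5} and $\bm{b}_d(\bm{x})^T\bm{b}_d(\bm{x}) \succeq 0$, the integrand is a Kronecker product of PSD matrices, hence PSD, and integration against $\mu$ preserves positive semidefiniteness; the same argument gives $\widehat{\bm{\Gamma}}_d(\bm{y}) \succeq 0$, and $\bm{M}_d(\bm{G}\bm{y}) = \int_{\mathcal{K}} \bm{G}(\bm{x}) \otimes \bm{b}_d(\bm{x})^T\bm{b}_d(\bm{x})\, d\mu \succeq 0$, as well as $\bm{M}_d(\bm{y}) \succeq 0$. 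The conditions $\widehat{\bm{A}}_{d,k}(\bm{y}) \succeq 0$ and $\widehat{\bm{\Gamma}}_d(\bm{y}) \succeq 0$ are exactly Assumption~\ref{Assumption-4} for the hatted matrices, so Theorem~\ref{Kocvara-AD-PSD} applies to $\bm{M}_d(\bm{G}\bm{y})$ and, from $\bm{M}_d(\bm{G}\bm{y}) \succeq 0$, produces matrices $\widehat{\bm{D}}_d$ and $\widehat{\bm{C}}_{d,k}$ satisfying \eqref{condition-rank-def-moment}.

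For the converse $(2)\Rightarrow(1)$, Assumption~\ref{Assumption-4} is not needed: summing the $p$ positive semidefinite blocks in \eqref{condition-rank-def-moment} over $k$, the off-diagonal $\widehat{\bm{D}}_d$-contributions cancel because $\sum_k \bm{\Pi}_k = \bm{0}$ (whence $\sum_k (\bm{\Pi}_k \otimes \bm{I}_{L_d})\widehat{\bm{D}}_d = \bm{0}$), while the bottom-right blocks sum to $\widehat{\bm{\Gamma}}_d(\bm{y})$ and the top-left summands to $\widehat{\bm{A}}_d(\bm{y})$, so the total equals $\bm{M}_d(\bm{G}\bm{y})$. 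Thus $\bm{M}_d(\bm{G}\bm{y}) \succeq 0$ as a sum of PSD matrices, exactly as in the reversed assertion at the end of the proof of Theorem~\ref{Kocvara-AD-PSD}. Together with $\bm{M}_d(\bm{y}) \succeq 0$ from $(2)$, and since this holds for every $d$, Theorem~\ref{putinar} then yields a representing measure supported on $\mathcal{K}$.

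The step I expect to be delicate is the forward direction's reliance on Assumption~\ref{Assumption-5}: positive semidefiniteness of the arrow matrix $\bm{M}_d(\bm{G}\bm{y})$ alone does \emph{not} imply $\widehat{\bm{A}}_{d,k}(\bm{y}) \succeq 0$ for the individual summands, so the hypothesis of Theorem~\ref{Kocvara-AD-PSD} cannot be verified from the localizing matrix being PSD. It is only through the existence of the representing measure that each $\widehat{\bm{A}}_{d,k}(\bm{y})$ and $\widehat{\bm{\Gamma}}_d(\bm{y})$ is seen to be PSD, as an integral of a PSD integrand. I would therefore be careful to invoke Theorem~\ref{Kocvara-AD-PSD} only in the direction where its PSD hypothesis is available, closing the equivalence through the measure on one side and through the elementary summation argument on the other.
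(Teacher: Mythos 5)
Your proposal is correct and follows essentially the same route as the paper's proof: the forward direction uses the representing measure (not merely $\bm{M}_d(\bm{G}\bm{y})\succeq 0$) to establish $\widehat{\bm{A}}_{d,k}(\bm{y})\succeq 0$ and $\widehat{\bm{\Gamma}}_d(\bm{y})\succeq 0$ so that Theorem~\ref{Kocvara-AD-PSD} applies to the arrow-structured localizing matrix, and the converse sums the PSD blocks and invokes Theorem~\ref{putinar}. Your closing remark about the delicate point is exactly the "claim" the paper proves by testing $\widehat{\bm{A}}_{d,k}(\bm{y})$ against vectors $\bm{v}$ via $\bm{w}(\bm{x})=(\bm{I}_n\otimes\bm{b}_d(\bm{x}))\bm{v}$, which is your integral argument in a different notation.
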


\begin{proof}
Suppose that the first statement holds. According to Theorem \ref{putinar}, we have $\bm{M}_d(\bm{y}) \succeq 0$ and $\bm{M}_d(\bm{Gy}) \succeq 0$ for all $d\in \mathbb{N}$. We claim that $\widehat{\bm{A}}_{d,k}(\bm{y}) \succeq 0, \forall k \in [p]$ and $\widehat{\bm{\Gamma}}_d(\bm{y}) \succeq 0$. To prove this claim, fix $k \in [p]$, $\bm{v} \in \mathbb{R}^{nL_d}$, and define the polynomial vector $\bm{w}(\x)=(\bm{I}_{n} \otimes \bb_d(\x)) \bm{v}$. We have
$$ \bm{v}^T \widehat{\bm{A}}_{d,k}(\bm{y})  \bm{v} = \mathscr{L}_{\y}\left( \langle \bm{A}_k(\bm{x}) \otimes \bm{I}_{L_d}, \bm{w(x)}^T\bm{w(x)}\rangle \right).$$
 By assumption, the matrix $\A_k(\x)$ is positive semidefinite on $\mathcal{K}$, and thus we have
$\langle \bm{A}_k(\bm{x}) \otimes \bm{I}_{L_d}, \bm{w(x)}^T\bm{w(x)}\rangle 
= \bm{w(x)}^T (\bm{A}_k(\bm{x}) \otimes \bm{I}_{L_d}) \bm{w(x)}
\geq 0$. Now, since $\bm{y}$ has representing measure supported on $\mathcal{K}$, we have $ \mathscr{L}_{\y}\left( \langle \bm{A}_k(\bm{x}) \otimes \bm{I}_{L_d}, \bm{w(x)}^T\bm{w(x)}\rangle \right)=\int_{\mathcal{K}} \langle \bm{A}_k(\bm{x}) \otimes \bm{I}_{L_d}, \bm{w(x)}^T\bm{w(x)}\rangle d \mu \geq 0$ (see section \ref{sec:notation}), so that $ \widehat{\bm{A}}_{d,k}(\bm{y})  \succeq 0$. 
Similarly, we prove that $\widehat{\bm{\Gamma}}_d(\bm{y}) \succeq 0$. Now, according to Proposition \ref{putinar}, we have $\bm{M}_d(\bm{Gy}) \succeq 0$, we can then conclude using Theorem \ref{Kocvara-AD-PSD}.

Conversely, suppose that the second statement holds. By Theorem \ref{Kocvara-AD-PSD}, we have $\bm{M}_d(\bm{Gy}) \succeq 0$ and $\bm{M}_d(\bm{y}) \succeq 0 $, for all $d \in \mathbb{N}$, so $\y$ has a representing measure by Theorem \ref{putinar}.
\end{proof}

\begin{remark}
 To ease the notation in the sequel, we omit the dependency of the matrices $\widehat{\bm{A}}_k$, $\widehat{\bm{B}}_k$, $\widehat{\bm{D}}$, $\widehat{\bm{C}}_k$ and $\widehat{\bm{\Gamma}}$, on the degree $d$. Moreover, we denote in the sequel $\widehat{\C}= (\widehat{\C}_k)_{k \in [p-1]}$.
\end{remark}

Theorem \ref{AD-putinar} allows us to state an arrow decomposition version of the infinite relaxations in \eqref{mSOS-infinite}:
\begin{corollary}
  Suppose that Assumptions \ref{assumption-archimedean} and \ref{Assumption-5} hold. We have 
\begin{equation}
\begin{aligned}
  p^*= \ &\underset{\substack{\y, \widehat{\bm{D}}, \widehat{\bm{C}}}}{\min} \ \mathscr{L}_{\bm{y}}(p)&  \\
 \text{s.t. }&\bm{y}_0=1,& \\
 &\bm{M}_d(\bm{y}) \succeq 0, \forall d \in \mathbb{N}, & \\
 &\M_{d}(\G_{k} \y)+ \begin{bmatrix}
   \bm{0} & (\bm{\Pi}_k \otimes \bm{I}_d) \widehat{\bm{D}} \\
 \widehat{\bm{D}}^T(\bm{\Pi}_k \otimes \bm{I}_d)^T & \widehat{\bm{C}}_k 
\end{bmatrix} \succeq 0, k \in [p-1], \forall d \in \mathbb{N},\\
 &\M_{d}(\G_{p} \y)+ \begin{bmatrix}
   \bm{0} & (\bm{\Pi}_p \otimes \bm{I}_d) \widehat{\bm{D}} \\
 \widehat{\bm{D}}^T(\bm{\Pi}_p \otimes \bm{I}_d)^T & \widehat{\bm{\Gamma}}(\bm{y})-\sum_{k=1}^{p-1}\widehat{\bm{C}}_k 
\end{bmatrix} \succeq 0, \forall d \in \mathbb{N}.
\end{aligned}
\label{ADmSOS-post-infinite}
\end{equation}
\label{corr-ADmSOS-post-infinite}
\end{corollary}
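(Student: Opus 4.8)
The plan is to obtain the corollary as a direct consequence of Theorem~\ref{AD-putinar}, exploiting the fact that the auxiliary decision variables $\widehat{\bm{D}}$ and $\widehat{\bm{C}}$ appear only in the constraints and never in the objective $\mathscr{L}_{\bm{y}}(p)$. Concretely, I would show that a sequence $\bm{y}$ with $y_0 = 1$ can be completed to a feasible point $(\bm{y}, \widehat{\bm{D}}, \widehat{\bm{C}})$ of \eqref{ADmSOS-post-infinite} if and only if $\bm{y}$ is feasible for the standard infinite relaxation \eqref{mSOS-infinite}. Since both programs minimize the same $\bm{y}$-dependent objective and \eqref{mSOS-infinite} is already known (by the discussion following Theorem~\ref{putinar}) to have optimal value $p^*$, the claim follows.

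First I would record the chain of equivalences for feasibility. Fix $\bm{y}$ with $y_0 = 1$. By construction, the existence of matrices $\widehat{\bm{D}}_d, \widehat{\bm{C}}_{d,k}$ rendering the constraints of \eqref{ADmSOS-post-infinite} satisfiable for every $d \in \mathbb{N}$ is precisely statement~(2) of Theorem~\ref{AD-putinar}, whose constraints are identical to \eqref{condition-rank-def-moment}. Under Assumptions~\ref{assumption-archimedean} and~\ref{Assumption-5}, Theorem~\ref{AD-putinar} asserts that statement~(2) holds if and only if $\bm{y}$ admits a representing measure supported on $\mathcal{K}$, which is its statement~(1). On the other hand, Theorem~\ref{putinar} characterizes the feasible set of \eqref{mSOS-infinite}: a sequence satisfies $\bm{M}_d(\bm{y}) \succeq 0$ and $\bm{M}_d(\bm{G}\bm{y}) \succeq 0$ for all $d$ exactly when it has a representing measure on $\mathcal{K}$. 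Chaining these two biconditionals shows that the $\bm{y}$-projection of the feasible set of \eqref{ADmSOS-post-infinite} equals the feasible set of \eqref{mSOS-infinite}.

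It then remains to match the optimal values. Since $\mathscr{L}_{\bm{y}}(p)$ depends only on $\bm{y}$ and is left unchanged by the decomposition, the infimum over the common feasible set is identical, and as noted above this infimum equals $p^*$. To justify writing $\min$ rather than $\inf$, I would observe that for any feasible $\bm{y}$ the representing measure $\mu$ satisfies $\mu(\mathcal{K}) = y_0 = 1$, whence $\mathscr{L}_{\bm{y}}(p) = \int_{\mathcal{K}} p\, d\mu \geq p^*$, while equality is attained by the moment sequence of a Dirac measure $\delta_{\bm{x}^*}$ at a global minimizer $\bm{x}^*$ of \eqref{POP-PMI} (which exists by compactness of $\mathcal{K}$), completed with the auxiliary variables guaranteed by Theorem~\ref{AD-putinar}.

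There is no serious analytic obstacle here, as Theorem~\ref{AD-putinar} carries the full weight of the argument; the only point requiring care is the bookkeeping of the auxiliary variables. Specifically, I must emphasize that $\widehat{\bm{D}}$ and $\widehat{\bm{C}}$ are existentially quantified and may be chosen separately for each degree $d$ (as reflected by the $\forall d$ quantifier together with the suppressed $d$-dependence fixed in the preceding Remark), so that feasibility of \eqref{ADmSOS-post-infinite} is genuinely a condition on $\bm{y}$ alone and the projection step is clean.
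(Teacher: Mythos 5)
Your proposal is correct and follows essentially the same route as the paper, which simply invokes Theorem~\ref{AD-putinar} (together with Lemma~\ref{lemma-correspondance-sigmas} for the identification $\widehat{\bm{\Pi}}_k = \bm{\Pi}_k \otimes \bm{I}_{L_d}$) to equate the feasible sets of \eqref{ADmSOS-post-infinite} and \eqref{mSOS-infinite}. Your write-up merely spells out the projection-of-the-feasible-set argument and the attainment of the minimum that the paper leaves implicit.
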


\begin{proof}
   Straightforward, we can show the equivalence between \eqref{ADmSOS-post-infinite} and \eqref{mSOS-infinite} by using Theorem \ref{AD-putinar} and Lemma \ref {lemma-correspondance-sigmas}.
\end{proof}

We now consider finite-dimensional relaxations of \eqref{ADmSOS-post-infinite}. Let $r\geq r_{\min} $ where $r_{\min} $ is defined in Section \ref{sec:mSOS}.  The resulting arrow-decomposed hierarchy of finite-dimensional semidefinite programs associated with \eqref{ADmSOS-post-infinite} is given by:

        \begin{equation}
\tag{ADmSOS$_{2}$}\begin{aligned}
  p_{\text{AD},r}^{\text{post}}=&\underset{\substack{\y, \widehat{\bm{D}}, \widehat{\bm{C}}}}{\min} \mathscr{L}_{\bm{y}}\left(p \right),  \\
 \text{s.t. }& y_{\bm{0}}=1, \\
 & \M_r(\y) \succeq 0, \\
 &\M_{r-r_G}(\G_{k} \y)+ \begin{bmatrix}
   \bm{0} & (\bm{\Pi}_k \otimes \bm{I}_{L_{r-r_{G}}})\widehat{\bm{D}} \\
 \widehat{\bm{D}}^T(\bm{\Pi}_k \otimes \bm{I}_{L_{r-r_G}})^T & \widehat{\bm{C}}_k 
\end{bmatrix} \succeq 0, k \in [p-1], \\
 &\M_{r-r_G}(\G_{p} \y)+ \begin{bmatrix}
   \bm{0} & (\bm{\Pi}_p \otimes \bm{I}_{L_{r-r_G}})\widehat{\bm{D}} \\
 \widehat{\bm{D}}^T(\bm{\Pi}_p \otimes \bm{I}_{L_{r-r_G}})^T & \widehat{\bm{\Gamma}}(\bm{y})-\sum_{k=1}^{p-1}\widehat{\bm{C}}_k
\end{bmatrix} \succeq 0.
\end{aligned}
\label{AD-posterior-lassereSOS}
\end{equation}

       %
        %
 

Next, we show that the relaxations \eqref{AD-posterior-lassereSOS} are at least as tight as \eqref{mSOS-finite}, and therefore converge as $r \rightarrow \infty$.
\begin{theorem}
Suppose that Assumptions \ref{assumption-archimedean} and \ref{Assumption-5} hold. Then for any relaxation order $r \geq r_{\text{min}}$, we have $ p_{r} \leq p_{\text{AD},r}^{\text{post}} \leq p^*$. Moreover, we have $\lim_{r \rightarrow \infty } p_{\text{AD},r}^{\text{post}}=\lim_{r \rightarrow \infty } p_{r}=p^*$.
    \label{Arrow-decomposition-PMI-posterior-mSOS} 
\end{theorem}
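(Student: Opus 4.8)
The plan is to sandwich $p_{\text{AD},r}^{\text{post}}$ between $p_r$ and $p^*$ for every fixed $r \ge r_{\text{min}}$, and then obtain the limit by squeezing with Theorem \ref{mSOS-convergence}. Both inequalities are obtained by comparing feasible sets of \eqref{AD-posterior-lassereSOS} and \eqref{mSOS-finite}, the upper bound using the forward direction of Theorem \ref{AD-putinar} and the lower bound using the (elementary) reverse direction of Theorem \ref{Kocvara-AD-PSD} applied at the level of the localizing matrix.

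For the upper bound $p_{\text{AD},r}^{\text{post}} \le p^*$, I would exhibit a feasible point of \eqref{AD-posterior-lassereSOS} whose objective equals $p^*$. Let $\bm{x}^*$ be a global minimizer of \eqref{POP-PMI} and let $\bm{y}^*=((\bm{x}^*)^{\bm{\alpha}})_{\bm{\alpha}}$ be the moment sequence of the Dirac measure $\delta_{\bm{x}^*}$, which is trivially a representing measure supported on $\mathcal{K}$. By Theorem \ref{AD-putinar} there exist auxiliary matrices $\widehat{\bm{D}}$ and $(\widehat{\bm{C}}_k)_k$, taken at degree $d=r-r_G$, for which the decomposed localizing inequalities \eqref{condition-rank-def-moment} hold, while also $\bm{M}_r(\bm{y}^*)\succeq 0$ and $y^*_{\bm{0}}=1$. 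Hence $(\bm{y}^*,\widehat{\bm{D}},\widehat{\bm{C}})$ is feasible for \eqref{AD-posterior-lassereSOS} with value $\mathscr{L}_{\bm{y}^*}(p)=p(\bm{x}^*)=p^*$, giving $p_{\text{AD},r}^{\text{post}} \le p^*$. Equivalently, one may read this bound off as a finite truncation of the infinite program in Corollary \ref{corr-ADmSOS-post-infinite}, which has optimal value $p^*$ and whose constraints are relaxed by retaining only the degree $d=r-r_G$.

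For the lower bound $p_r \le p_{\text{AD},r}^{\text{post}}$, I would show that every feasible triple $(\bm{y},\widehat{\bm{D}},\widehat{\bm{C}})$ of \eqref{AD-posterior-lassereSOS} has its $\bm{y}$-component feasible for \eqref{mSOS-finite}. Indeed, by \eqref{definition-Arrow-structure-localizing-matrix} the matrix $\bm{M}_{r-r_G}(\G\bm{y})$ is itself of arrow type with blocks $\widehat{\bm{A}}_{k}$, $\widehat{\bm{B}}_{k}$ and $\widehat{\bm{\Gamma}}$, and the $p$ decomposed inequalities of \eqref{AD-posterior-lassereSOS} are precisely the matrices $\widetilde{\G}_k\succeq 0$ appearing in Theorem \ref{Kocvara-AD-PSD} at the level of this localizing matrix. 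Applying the reverse implication of that theorem — summing the $p$ positive semidefinite blocks, whereupon the off-diagonal $\widehat{\bm{D}}$-contributions cancel by the telescoping relation \eqref{additional-variables-dependencies} underlying the $\bm{\Pi}_k$, and the diagonal blocks sum to $\widehat{\bm{\Gamma}}$ — yields $\bm{M}_{r-r_G}(\G\bm{y})\succeq 0$. Together with $\bm{M}_r(\bm{y})\succeq 0$ and $y_{\bm{0}}=1$, this makes $\bm{y}$ feasible for \eqref{mSOS-finite}, so $\mathscr{L}_{\bm{y}}(p)\ge p_r$; taking the infimum over all feasible triples gives $p_r \le p_{\text{AD},r}^{\text{post}}$.

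Combining the two bounds gives $p_r \le p_{\text{AD},r}^{\text{post}} \le p^*$ for all $r\ge r_{\text{min}}$, and since $\lim_{r\to\infty}p_r=p^*$ by Theorem \ref{mSOS-convergence}, the squeeze theorem yields $\lim_{r\to\infty}p_{\text{AD},r}^{\text{post}}=p^*$. The only point requiring care is the reverse implication in the lower-bound step: one must check that the cancellation of the auxiliary blocks upon summation is \emph{purely algebraic}, hence valid for an arbitrary feasible $\bm{y}$ with no appeal to a representing measure. The positive semidefiniteness of the blocks $\widehat{\bm{A}}_k$ and $\widehat{\bm{\Gamma}}$ — which does require Assumption \ref{Assumption-5} together with an actual measure — is needed only in the forward direction of Theorem \ref{AD-putinar}, that is, in the upper-bound step.
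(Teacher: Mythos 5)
Your proof is correct and follows essentially the same route as the paper: the upper bound by exhibiting the moment sequence of $\delta_{\bm{x}^*}$ together with the auxiliary matrices supplied by Theorem~\ref{AD-putinar}, the lower bound by summing the $p$ decomposed PSD blocks (telescoping cancellation of the $\widehat{\bm{D}}$ terms and the $\widehat{\bm{C}}_k$ summing to $\widehat{\bm{\Gamma}}(\bm{y})$) to recover $\bm{M}_{r-r_G}(\bm{G}\bm{y})\succeq 0$, and then the squeeze via Theorem~\ref{mSOS-convergence}. Your closing observation that the reverse implication is purely algebraic, while the representing measure is only needed for the forward direction, is accurate and in fact makes explicit a step the paper leaves implicit.
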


\begin{proof}
If $(\bm{y}^*,\widehat{\bm{D}}^*, \widehat{\bm{C}}^*)$ is a solution to \eqref{AD-posterior-lassereSOS}, then $\bm{y}^*$ is feasible for \eqref{mSOS-finite}, so $f_{\text{AD},r}^{\text{post}} \geq f_{r}$. 

Now, if $\bm{x}^*$ is a solution to \eqref{POP-PMI}, then by Theorem \ref{putinar}, one can find a moment vector $\bm{y}^*$ solution to Problem \ref{mSOS-infinite}. By Theorem \ref{AD-putinar}, we can find $(\widehat{\bm{D}}^*, \widehat{\bm{C}}^*)$ such that $(\bm{y}^*,\widehat{\bm{D}}^*, \widehat{\bm{C}}^*)$ is a solution to \eqref{AD-posterior-lassereSOS} so that $ f_{\text{AD},r}^{\text{post}} \leq f^*$. Since $\lim_{r \rightarrow \infty} f_r=f^*$ by Theorem \ref{mSOS-convergence}, we conclude that $\lim_{r \rightarrow \infty } f_{\text{AD},r}^{\text{post}}=\lim_{r \rightarrow \infty } f_{r}=f^*$.
\end{proof}

   Let us notice that the approach presented in this section preserves the applicability of the rank flatness condition and the extraction of minimizers presented in Section \ref{sec:mSOS}. Indeed, if $\y^*$ is a solution of \eqref{AD-posterior-lassereSOS} at a given relaxation order $r$,  we have $\M_r(\y^*) \succeq 0$ and $$\M_{r-r_{G}}(\G\y^*)=\sum_{k=1}^p\left(\M_{r-r_{G}}(\G_k\y^*) +\begin{bmatrix}
   \bm{0} & (\bm{\Pi}_k \otimes \bm{I}_{L_{r-r_{G}}})\widehat{\bm{D}} \\
 \widehat{\bm{D}}^T(\bm{\Pi}_k \otimes \bm{I}_{L_{r-r_G}})^T & \widehat{\bm{C}}_k 
\end{bmatrix} \right)\succeq 0,$$
allowing us to get the full information on $\y^*$ to test the condition \eqref{flatness}.

    \subsection{Computational considerations}
    \label{computation-consideration-msos}
    Here, as in Section \ref{computational-consideration-lmi}, we aim to deal with potential rank deficiencies in the matrices $\widehat{\bm{A}}_k(\bm{y})$. As the relaxation degree increases, the size of these matrices grows significantly, making it computationally expensive to test for rank deficiency or to compute their range and null spaces. This increased complexity poses practical challenges for directly implementing Proposition \ref{proposition-rank-defficiency}.
    
    In this section, we consider a special class of polynomial matrices $\bm{A}_k(\bm{x})$ whose range and null spaces are constant (i.e., independent of $\bm{x}$). For such matrices, we can derive a weaker version of Proposition \ref{proposition-rank-defficiency} applicable to the associated matrices $ \widehat{\bm{A}}_k(\bm{y})$, by leveraging the known range and null spaces of $\bm{A}_k(\x)$.
    
The first two results establish a link between the null space (respectively range space) of a given matrix $\bm{A}(\x)$ and its associated localizing matrix $ \widehat{\A}(\y) $.  
      \begin{lemma}
     Let $\bm{A} \in \mathbb{S}^n[\bm{x}]$, $\bm{b}(\bm{x})$ be a polynomial vector of length $L$, and  $\bm{y}=(y_{\bm{\alpha}})_{\bm{\alpha} }$ a given vector of length $L$. We have 
     $$\text{Null}(\bm{A(x)} \otimes \bm{I}_L) \subseteq \text{Null}(\mathscr{L}_y(\bm{A(x)}\otimes \bm{b(x)}^T\bm{b(x)})).$$
     \label{rank-defficiency-moment}
    \end{lemma}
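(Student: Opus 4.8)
The plan is to show that every constant vector $\bm{v}$ lying in the (common) null space of the polynomial matrix $\bm{A}(\bm{x}) \otimes \bm{I}_L$ --- that is, every $\bm{v} \in \mathbb{R}^{nL}$ with $(\bm{A}(\bm{x}) \otimes \bm{I}_L)\bm{v} = \bm{0}$ identically in $\bm{x}$ --- is annihilated by the constant matrix $\widehat{\bm{A}}(\bm{y}) := \mathscr{L}_{\bm{y}}(\bm{A}(\bm{x}) \otimes \bm{b}(\bm{x})^T\bm{b}(\bm{x}))$. The whole argument rests on a single algebraic observation: the Kronecker product defining the localizing matrix factors through $\bm{A}(\bm{x}) \otimes \bm{I}_L$, so that a vector killing the latter already kills the integrand \emph{before} $\mathscr{L}_{\bm{y}}$ is applied.

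Concretely, I would first invoke the mixed-product property $(\bm{A}\otimes\bm{B})(\bm{C}\otimes\bm{D}) = (\bm{A}\bm{C})\otimes(\bm{B}\bm{D})$ recalled in Section~\ref{sec:notation}, with $\bm{B} = \bm{I}_L$, $\bm{C} = \bm{I}_n$ and $\bm{D} = \bm{b}(\bm{x})^T\bm{b}(\bm{x})$, to obtain the polynomial-matrix identity
\begin{equation*}
\bm{A}(\bm{x}) \otimes \bm{b}(\bm{x})^T\bm{b}(\bm{x}) = \bigl(\bm{I}_n \otimes \bm{b}(\bm{x})^T\bm{b}(\bm{x})\bigr)\bigl(\bm{A}(\bm{x}) \otimes \bm{I}_L\bigr).
\end{equation*}
Fixing $\bm{v} \in \text{Null}(\bm{A}(\bm{x}) \otimes \bm{I}_L)$ and multiplying on the right, the inner factor yields $(\bm{A}(\bm{x}) \otimes \bm{I}_L)\bm{v} = \bm{0}$ for every $\bm{x}$, so that $\bigl(\bm{A}(\bm{x}) \otimes \bm{b}(\bm{x})^T\bm{b}(\bm{x})\bigr)\bm{v} = \bm{0}$ holds as the zero polynomial vector.

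It then remains to commute the right-multiplication by the constant vector $\bm{v}$ with the linear operator $\mathscr{L}_{\bm{y}}$, which acts entry-wise. Since each component of $(\bm{A}(\bm{x}) \otimes \bm{b}(\bm{x})^T\bm{b}(\bm{x}))\bm{v}$ is a fixed linear combination, with the constant coefficients $v_j$, of the entries of $\bm{A}(\bm{x}) \otimes \bm{b}(\bm{x})^T\bm{b}(\bm{x})$, linearity of $\mathscr{L}_{\bm{y}}$ gives $\widehat{\bm{A}}(\bm{y})\bm{v} = \mathscr{L}_{\bm{y}}\bigl((\bm{A}(\bm{x}) \otimes \bm{b}(\bm{x})^T\bm{b}(\bm{x}))\bm{v}\bigr) = \mathscr{L}_{\bm{y}}(\bm{0}) = \bm{0}$. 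Hence $\bm{v} \in \text{Null}(\widehat{\bm{A}}(\bm{y}))$, which is precisely the claimed inclusion.

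I expect no serious obstacle here; the only points requiring care are the interpretation of $\text{Null}(\bm{A}(\bm{x}) \otimes \bm{I}_L)$ as the space of constant vectors annihilating the matrix identically in $\bm{x}$ (exactly the setting of this subsection, where the null space of $\bm{A}_k(\bm{x})$ is assumed independent of $\bm{x}$), together with the elementary but essential fact that $\mathscr{L}_{\bm{y}}$ commutes with multiplication by a constant matrix or vector. Everything else follows directly from the mixed-product property of the Kronecker product, so the proof is short and self-contained.
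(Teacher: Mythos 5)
Your proof is correct and follows essentially the same route as the paper's: both arguments rest on the mixed-product property of the Kronecker product to show that $\bm{A}(\bm{x})\otimes\bm{b}(\bm{x})^T\bm{b}(\bm{x})$ annihilates the relevant null-space vectors identically in $\bm{x}$, after which linearity of $\mathscr{L}_{\bm{y}}$ (commuting with right-multiplication by a constant) finishes the job. The only cosmetic difference is that the paper right-multiplies by $\bm{w}\otimes\bm{I}_L$ for $\bm{w}\in\text{Null}(\bm{A}(\bm{x}))$, whereas you factor $\bm{A}(\bm{x})\otimes\bm{b}(\bm{x})^T\bm{b}(\bm{x})=(\bm{I}_n\otimes\bm{b}(\bm{x})^T\bm{b}(\bm{x}))(\bm{A}(\bm{x})\otimes\bm{I}_L)$ and treat an arbitrary element of $\text{Null}(\bm{A}(\bm{x})\otimes\bm{I}_L)$ directly, which is if anything marginally more complete.
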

    
    \begin{proof}
        Let $\bm{w} \in \text{Null}(\bm{A(x)})$. We have $\bm{w} \otimes \bm{I}_L \in \text{Null}(\bm{A(x)}\otimes \bm{I}_L)$ and 
        $$ \mathscr{L}_y(\bm{A}(\x)\otimes \bm{b}(\x)^T\bm{b}(\x))(\bm{w} \otimes \bm{I}_L)=\mathscr{L}_y(\bm{A}(\x)\bm{w}\otimes \bm{b}(\x)^T\bm{b}(\x))=0.$$
    \end{proof}
        
     \begin{lemma}
      Let $\bm{A} \in \mathbb{S}^n[\bm{x}]$ and $\bm{P} \in \mathbb{R}^{n \times s}$, $s \leq n$ with full column rank and such that $\bm{P}^T\bm{P}=\bm{I}_{s}$. \footnote{This result can be proved in a more general context, without assuming that $\bm{P}$ has full column rank or that  $\bm{P}^T\bm{P}=\bm{I}_{s}$. However, since the main result in Proposition \ref{proposition-projection-msos} always relies on these additional assumptions, a general proof is beyond the scope of this paper.} Suppose that $\text{Span} (\bm{A}(\bm{x})) \subseteq \text{Span}(\bm{P})$ for all $\bm{x} \in \mathbb{R}^{n_x}$. Let $\bm{b}(\bm{x})$ be a polynomial vector of length $L$, and  $\bm{y}=(y_{\bm{\alpha}})_{\bm{\alpha} }$ a given vector of length $L$. We have $$\text{Span}\left( \mathscr{L}_{\bm{y}}(\bm{A(x)} \otimes \bm{b}(\bm{x})^T\bm{b}(\bm{x})) \right) \subseteq \text{Span}\left( \bm{P} \otimes \bm{I}_L \right).$$ 
      \label{lemma-inclusion-msos}
     \end{lemma}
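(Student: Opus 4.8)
The plan is to recast the span inclusion as an invariance identity under the relevant orthogonal projector and then verify that identity by a direct Kronecker computation. Since $\bm{P}^T\bm{P}=\bm{I}_s$, the matrix $\bm{P}\bm{P}^T$ is the orthogonal projector onto $\text{Span}(\bm{P})$, so the hypothesis $\text{Span}(\bm{A}(\bm{x}))\subseteq\text{Span}(\bm{P})$ for every $\bm{x}$ means that each column of $\bm{A}(\bm{x})$ is fixed by this projector, i.e.
\begin{equation*}
\bm{P}\bm{P}^T\bm{A}(\bm{x})=\bm{A}(\bm{x})\qquad\text{for all }\bm{x}\in\mathbb{R}^{n_x}.
\end{equation*}
Likewise, $\bm{P}\otimes\bm{I}_L$ has orthonormal columns, since $(\bm{P}\otimes\bm{I}_L)^T(\bm{P}\otimes\bm{I}_L)=(\bm{P}^T\bm{P})\otimes\bm{I}_L=\bm{I}_{sL}$, and hence $(\bm{P}\otimes\bm{I}_L)(\bm{P}\otimes\bm{I}_L)^T=(\bm{P}\bm{P}^T)\otimes\bm{I}_L$ is the orthogonal projector onto $\text{Span}(\bm{P}\otimes\bm{I}_L)$. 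Consequently, proving the desired inclusion for $\widehat{\bm{A}}(\bm{y}):=\mathscr{L}_{\bm{y}}(\bm{A}(\bm{x})\otimes\bm{b}(\bm{x})^T\bm{b}(\bm{x}))$ reduces to establishing that $\widehat{\bm{A}}(\bm{y})$ is left invariant by this projector.

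The core computation then proceeds as follows. Because $(\bm{P}\bm{P}^T)\otimes\bm{I}_L$ is a constant matrix and $\mathscr{L}_{\bm{y}}$ acts entrywise and linearly, I would pull the constant factor through the operator,
\begin{equation*}
\big((\bm{P}\bm{P}^T)\otimes\bm{I}_L\big)\widehat{\bm{A}}(\bm{y})=\mathscr{L}_{\bm{y}}\!\Big(\big((\bm{P}\bm{P}^T)\otimes\bm{I}_L\big)\big(\bm{A}(\bm{x})\otimes\bm{b}(\bm{x})^T\bm{b}(\bm{x})\big)\Big).
\end{equation*}
The mixed-product property $(\bm{A}\otimes\bm{B})(\bm{C}\otimes\bm{D})=(\bm{A}\bm{C})\otimes(\bm{B}\bm{D})$ recalled in Section~\ref{sec:notation} turns the argument into $(\bm{P}\bm{P}^T\bm{A}(\bm{x}))\otimes\bm{b}(\bm{x})^T\bm{b}(\bm{x})$, and the projector identity from the first paragraph collapses this to $\bm{A}(\bm{x})\otimes\bm{b}(\bm{x})^T\bm{b}(\bm{x})$. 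Applying $\mathscr{L}_{\bm{y}}$ then recovers exactly $\widehat{\bm{A}}(\bm{y})$, so $\big((\bm{P}\bm{P}^T)\otimes\bm{I}_L\big)\widehat{\bm{A}}(\bm{y})=\widehat{\bm{A}}(\bm{y})$.

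Finally, since every column of $\widehat{\bm{A}}(\bm{y})$ is fixed by the projector $(\bm{P}\bm{P}^T)\otimes\bm{I}_L$, it lies in the range of that projector, which is precisely $\text{Span}(\bm{P}\otimes\bm{I}_L)$; this yields the claimed inclusion. The only genuinely delicate points are justifying that the span hypothesis is equivalent to the pointwise identity $\bm{P}\bm{P}^T\bm{A}(\bm{x})=\bm{A}(\bm{x})$—this is exactly where the orthonormality $\bm{P}^T\bm{P}=\bm{I}_s$ enters, and it explains why the footnote restricts attention to such $\bm{P}$—and checking that the constant matrix may be pulled out of $\mathscr{L}_{\bm{y}}$. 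Both are routine once stated explicitly, and everything else is a direct application of the Kronecker identities from Section~\ref{sec:notation}.
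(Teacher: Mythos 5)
Your proof is correct, but it takes a different route from the one in the paper. The paper argues column by column: it takes an arbitrary element $\bm{v}=\mathscr{L}_{\bm{y}}(\bm{A}(\bm{x})\otimes\bm{b}(\bm{x})^T\bm{b}(\bm{x}))\bm{u}$ of the span, rewrites it via the vectorization identity $\mathrm{Vec}(\bm{ABC})=(\bm{C}^T\otimes\bm{A})\mathrm{Vec}(\bm{B})$ as $\bm{V}=\mathscr{L}_{\bm{y}}(\bm{b}(\bm{x})^T\bm{b}(\bm{x})\bm{U}\bm{A}(\bm{x}))$, uses the span hypothesis to factor $\bm{b}(\bm{x})^T\bm{b}(\bm{x})\bm{U}\bm{A}(\bm{x})=\bm{W}(\bm{x})\bm{P}^T$ with $\bm{W}$ a polynomial matrix, and devectorizes to exhibit $\bm{v}=(\bm{P}\otimes\bm{I}_L)\mathscr{L}_{\bm{y}}(\bm{w}(\bm{x}))$ explicitly. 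You instead encode the hypothesis as the projector identity $\bm{P}\bm{P}^T\bm{A}(\bm{x})=\bm{A}(\bm{x})$, push the constant projector $(\bm{P}\bm{P}^T)\otimes\bm{I}_L$ through the linear operator $\mathscr{L}_{\bm{y}}$, and collapse it with the mixed-product rule, concluding that $\widehat{\bm{A}}(\bm{y})$ is fixed by the projector onto $\mathrm{Span}(\bm{P}\otimes\bm{I}_L)$. Every step checks out: the commutation of a constant matrix with the entrywise linear functional $\mathscr{L}_{\bm{y}}$ is legitimate, and the identity $\bm{P}\bm{P}^T\bm{A}(\bm{x})=\bm{A}(\bm{x})$ holds as a polynomial identity since it holds pointwise for all $\bm{x}$. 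Your argument is arguably shorter and avoids the vectorization bookkeeping, at the price of leaning more heavily on the orthonormality $\bm{P}^T\bm{P}=\bm{I}_s$ (which is what makes $\bm{P}\bm{P}^T$ the orthogonal projector); the paper's constructive factorization generalizes slightly more readily to the non-orthonormal setting alluded to in the footnote, since there one only needs a constant left inverse of $\bm{P}$ to keep $\bm{W}$ polynomial. One cosmetic remark: your closing claim that the orthonormality "explains why the footnote restricts attention to such $\bm{P}$" is specific to your argument, not to the statement itself, which (as the footnote notes) holds more generally.
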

    
    \begin{proof}
    Let $\bm{x} \in \mathbb{R}^{n_x}$ and $\bm{v} \in \text{Span}\left( \mathscr{L}_{\bm{y}}(\bm{A(x)} \otimes \bm{b}(\bm{x})^T\bm{b}(\bm{x})) \right) $. There exists a vector $\bm{u} \in \mathbb{R}^{nL}$ such that $ \bm{v}=\mathscr{L}_{\bm{y}}(\bm{A(x)} \otimes \bm{b}(\bm{x})^T\bm{b}(\bm{x}))\bm{u}$. We denote by $\bm{U}$ and $\bm{V}$ the matrices such that $\bm{u}=\text{Vec}(\bm{U})$ and $\bm{v}=\text{Vec}(\bm{V})$, where recall $\text{Vec}$ is the vectorization operator. Using Kronecker product and vectorization operator properties, we have 
$\bm{A}(\x) \otimes \bm{b}(\bm{x})^T\bm{b}(\bm{x})\bm{u}=\text{Vec}(\bm{b}(\bm{x})^T\bm{b}(\bm{x}) \bm{U} \bm{A}(\bm{x}))$ and thus $$\bm{V}=\mathscr{L}_{\bm{y}}(\bm{b}(\bm{x})^T\bm{b}(\bm{x}) \bm{U} \bm{A}(\x) ).$$

Now, since $ \text{Span}(\bm{A}(\bm{x})) \subseteq  \text{Span}(\bm{P})$, 
there exists a matrix $\bm{W} \in \mathbb{R}^{L \times s}[\bm{x}]$ such that $ \bm{b}(\bm{x})^T\bm{b}(\bm{x}) \bm{U} \bm{A}(\bm{x})=\bm{W}(\x)\bm{P}^T$. We have then $ \bm{V}=\mathscr{L}_{\bm{y}}(\bm{W}(\x) \bm{P}^T)=\mathscr{L}_{\bm{y}}(\bm{I}_L\bm{W}(\x) \bm{P}^T)$, and therefore $$ \bm{v}=(\bm{P} \otimes \bm{I}_L)\mathscr{L}_{\bm{y}}(\bm{w(x)})$$ with $ \bm{w(x)}=\text{Vec}(\bm{W}(\x))$, proving that $\bm{v} \in \text{Span}\left( \bm{P} \otimes \bm{I}_L \right)$.
    \end{proof}

 We now state the main result of this section.    
   \begin{proposition}
    Suppose that Assumption \ref{Assumption-5} holds. For each $k \in [p]$, let $\bm{P}_k \in \mathbb{R}^{n_k \times s_k}$, $s_k \leq n_k$, be a matrix with full column rank satisfying $\bm{P}_k^T\bm{P}_k=\bm{I}_{s_k}$ and such that for all $\x \in \mathbb{R}^{n_x}$, we have $ \text{Span}(\bm{A}_k(\bm{x})) \subseteq \text{Span}(\bm{P}_k)$. Moreover, suppose that the vector $\bm{y}=(y_{\bm{\alpha}})_{\bm{\alpha}}$ has a representing measure supported on $\mathcal{K}$. Then the condition \eqref{condition-rank-def-moment}
    is equivalent to the conditions \begin{equation}
    \begin{aligned}
       & \begin{bmatrix}
        \widehat{\bm{P}}_k^T\widehat{\bm{A}}_k(\bm{y})\widehat{\bm{P}}_k & \widehat{\bm{P}}_k^T(\widehat{\bm{B}}_k(\bm{y})+(\bm{\Pi}_k \otimes \bm{I}_{L_d})\widehat{\bm{D}}) \\ (\widehat{\bm{B}}_k(\bm{y})+(\bm{\Pi}_k \otimes \bm{I}_{L_d})\widehat{\bm{D}})^T\widehat{\bm{P}}_k & \widehat{\bm{C}}_k
    \end{bmatrix} \succeq 0, \forall k \in [p-1], \\
    &  \begin{bmatrix}\widehat{\bm{P}}_p^T\widehat{\bm{A}}_p(\bm{y})\widehat{\bm{P}}_p & \widehat{\bm{P}}_k^T(\widehat{\bm{B}}_k(\bm{y})+(\bm{\Pi}_k \otimes \bm{I}_{L_d})\widehat{\bm{D}}) \\ (\widehat{\bm{B}}_p(\bm{y})+(\bm{\Pi}_p \otimes \bm{I}_{L_d})\widehat{\bm{D}})^T\widehat{\bm{P}}_p & \widehat{\bm{\Gamma}}(\y)-\sum_{k=1}^{p-1}\widehat{\bm{C}}_k
    \end{bmatrix} \succeq 0, \\
     &\text{Span}(\widehat{\bm{B}}_k(\y)+ (\bm{\Pi}_k\otimes \bm{I}_{L_d})\widehat{\D})\subseteq \text{Span}(\widehat{\bm{P}}_k), \forall k \in [p].
       \end{aligned}
    \label{LMI-projected-msos}
    \end{equation} 
    with $\widehat{\bm{P}}_k=\bm{P}_k \otimes \bm{I}_{L_d}$.
    \label{proposition-projection-msos}
    \end{proposition}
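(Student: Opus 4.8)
The plan is to treat this as the moment-level counterpart of Proposition~\ref{proposition-rank-defficiency} and to reduce it, block by block, to that already-established LMI result. For a fixed pseudo-moment vector $\bm{y}$, the matrices $\widehat{\bm{A}}_k(\bm{y})$, $\widehat{\bm{B}}_k(\bm{y})+(\bm{\Pi}_k\otimes\bm{I}_{L_d})\widehat{\bm{D}}$ and $\widehat{\bm{C}}_k$ (with $\widehat{\bm{\Gamma}}(\bm{y})-\sum_{k<p}\widehat{\bm{C}}_k$ in the last block) are constant symmetric matrices, so they play exactly the roles of $\bm{A}_k$, $\bm{B}_k+\bm{D}_k$ and $\bm{C}_k$ in \eqref{AD-equation-LMI}, with $\widehat{\bm{P}}_k=\bm{P}_k\otimes\bm{I}_{L_d}$ playing the role of $\bm{P}_k$. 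Since both \eqref{condition-rank-def-moment} and \eqref{LMI-projected-msos} are existential statements over the \emph{same} variables $\widehat{\bm{D}},\widehat{\bm{C}}$, it suffices to show, for each $k$ and each fixed choice of those variables, that the $k$-th block of \eqref{condition-rank-def-moment} is positive semidefinite if and only if the corresponding projected block of \eqref{LMI-projected-msos} is positive semidefinite and the span inclusion $\text{Span}(\widehat{\bm{B}}_k(\bm{y})+(\bm{\Pi}_k\otimes\bm{I}_{L_d})\widehat{\bm{D}})\subseteq\text{Span}(\widehat{\bm{P}}_k)$ holds. This is precisely the blockwise content of Proposition~\ref{proposition-rank-defficiency}.

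The real work is therefore to verify that the hatted objects satisfy the hypotheses of Proposition~\ref{proposition-rank-defficiency}. First, $\widehat{\bm{P}}_k$ has orthonormal columns and full column rank: by the Kronecker identities recalled in Section~\ref{sec:notation}, $\widehat{\bm{P}}_k^T\widehat{\bm{P}}_k=(\bm{P}_k^T\bm{P}_k)\otimes\bm{I}_{L_d}=\bm{I}_{s_kL_d}$. Second, I would establish the range inclusion $\text{Span}(\widehat{\bm{A}}_k(\bm{y}))\subseteq\text{Span}(\widehat{\bm{P}}_k)$, which is exactly Lemma~\ref{lemma-inclusion-msos} applied to $\bm{A}_k(\bm{x})$, the projector $\bm{P}_k$ and the monomial vector $\bm{b}_d$: the standing hypothesis $\text{Span}(\bm{A}_k(\bm{x}))\subseteq\text{Span}(\bm{P}_k)$ for all $\bm{x}$ lifts to $\text{Span}(\widehat{\bm{A}}_k(\bm{y}))\subseteq\text{Span}(\bm{P}_k\otimes\bm{I}_{L_d})=\text{Span}(\widehat{\bm{P}}_k)$. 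Third, to meet the positive-semidefiniteness requirement (the analogue of Assumption~\ref{Assumption-4}) of Proposition~\ref{proposition-rank-defficiency}, I need $\widehat{\bm{A}}_k(\bm{y})\succeq0$ and $\widehat{\bm{\Gamma}}(\bm{y})\succeq0$; this is where the hypothesis that $\bm{y}$ admits a representing measure on $\mathcal{K}$ enters, together with Assumption~\ref{Assumption-5}, and it follows verbatim from the integration argument already carried out in the proof of Theorem~\ref{AD-putinar}.

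With these three facts in hand, Proposition~\ref{proposition-rank-defficiency} gives the equivalence blockwise, and assembling the equivalences over $k=1,\dots,p$ yields the equivalence of \eqref{condition-rank-def-moment} and \eqref{LMI-projected-msos}. I expect the span-inclusion step to be the main obstacle: for a generic polynomial matrix the range of the localizing matrix $\widehat{\bm{A}}_k(\bm{y})$ is not controlled by any fixed subspace, and it is only the assumption that $\bm{A}_k(\bm{x})$ has an $\bm{x}$-independent range lying in $\text{Span}(\bm{P}_k)$ that renders Lemma~\ref{lemma-inclusion-msos} applicable — which is exactly why the proposition is restricted to matrices with constant range and null spaces. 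A minor point requiring care is the last block $k=p$, where the uncompressed corner $\widehat{\bm{\Gamma}}(\bm{y})-\sum_{k<p}\widehat{\bm{C}}_k$ replaces $\widehat{\bm{C}}_p$; since the congruence $\text{diag}(\widehat{\bm{P}}_p,\bm{I})$ leaves that corner untouched, the same block argument applies without modification.
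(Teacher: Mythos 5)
Your proposal is correct and follows essentially the same route as the paper's proof: use the representing-measure hypothesis (via the integration argument from the proof of Theorem~\ref{AD-putinar}) to get $\widehat{\bm{A}}_k(\bm{y})\succeq 0$ and $\widehat{\bm{\Gamma}}(\bm{y})\succeq 0$, lift the range inclusion through Lemma~\ref{lemma-inclusion-msos}, and then invoke Proposition~\ref{proposition-rank-defficiency} blockwise. Your write-up merely makes explicit some details the paper leaves implicit (the orthonormality of $\widehat{\bm{P}}_k$ and the treatment of the $k=p$ block), but the argument is the same.
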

    
    \begin{proof}
        Since $\bm{y}$ has a representing measure on $\mathcal{K}$, we have that $\widehat{\bm{A}}_k(\bm{y}) \succeq 0 $ for all $k \in [p]$ ( see Proof of Theorem \ref{AD-putinar}). The rest of the proof is due to Proposition \ref{proposition-rank-defficiency} and Lemma \ref{lemma-inclusion-msos}.
    \end{proof}


    Proposition~\ref{proposition-projection-msos} is particularly useful when $\A_k(\x)$ is rank-deficient for some $k \in [p]$. In such cases, Lemma~\ref{rank-defficiency-moment} ensures that the associated localizing matrix is also rank-deficient. Consequently, one can find $\bm{P}_k$ satisfying the assumptions of Proposition~\ref{proposition-projection-msos} with $s_k<n_k$, thereby reducing the size of $\widehat{\bm{G}}_k$ while imposing the condition $\text{Span}(\widehat{\bm{B}}_k(\y)+ (\bm{\Pi}_k\otimes \bm{I})\widehat{\D})\subseteq \text{Span}(\widehat{\bm{P}}_k)$. This condition is equivalent to $\widehat{\bm{B}}_k(\bm{y})+(\bm{\Pi}_k \otimes \bm{I}_{L_d})\widehat{\bm{D}} \in \text{Im} (\widehat{\bm{P}}_k(\bm{y})) $ and it suffices to replace it by $\widehat{\bm{B}}_k(\bm{y})+(\bm{\Pi}_k \otimes \bm{I}_{L_d})\widehat{\bm{D}} \in \text{Im} (\widehat{\bm{A}}_k(\bm{y})) $. In applications where $\A_k(\x)$ has a constant null space, i.e. there exists a subspace $\mathcal{N}_k \subset \mathbb{R}^n$ such that $ \mathcal{N}_k ~\bot ~\text{Span}(\bm{P}_k)$ and $\text{Null}(\bm{A}(\x)) =\mathcal{N}_k$ for all $\x \in \mathcal{K}$ (see, e.g., Section \ref{sec:po_variable_reduction}), the condition $\widehat{\bm{B}}_k(\bm{y})+(\bm{\Pi}_k \otimes \bm{I}_{L_d})\widehat{\bm{D}} \in \text{Im} (\widehat{\bm{A}}_k(\bm{y})) $ can be enforced implicitly by solving for $\widehat{\D}$
      the system $$ \left\lbrace\left(\widehat{\bm{B}}_k(\bm{y})+(\bm{\Pi}_k \otimes \bm{I}_{L_d})\widehat{\bm{D}}\right)(\bm{W}_k \otimes \bm{I}_{L_d})=0, k \in [p] \right\rbrace,$$
     where $\bm{W}_k$ spans $\mathcal{N}_k $. This procedure also allows eliminating some (or all) of the entries of $\widehat{\bm{D}}$.

\subsection{A connection between AD posterior and AD prior the mSOS}
\label{appendix:connection-prior-posterior}

In this section, we emphasize that the problem \eqref{AD-posterior-lassereSOS} can also be obtained from the problem \eqref{AD-prior-mSOS} by considering the additional variables $\bm{D}$ and $\bm{C}$ 
as external variables to the hierarchy. More precisely, we consider the problem: 
\begin{equation*}
\begin{aligned}
 &\underset{\substack{\bm{y}, \bm{D}, \bm{C}}}{\min} \mathscr{L}_{\bm{y}}\left(f \right),  \\
 \text{s.t. }& y_{\bm{0}}=1, \\
 & \bm{M}_r(\bm{y}) \succeq 0, \\
 &\bm{M}_{r-r_G}(\bm{G}_{k}\bm{y})+ \bm{M}_{r-r_G}\left( \begin{bmatrix}
   \bm{0} & \bm{\Pi}_k\bm{D} \\
  \bm{D}^T\bm{\Pi}_k^T & \bm{C}_k
\end{bmatrix}\bm{y} \right) \succeq 0,\ \ k \in [p-1], \\
&\bm{M}_{r-r_G}(\bm{G}_{p}\bm{y})+ \bm{M}_{r-r_G}\left( \begin{bmatrix}
   \bm{0} & \bm{\Pi}_p\bm{D} \\
  \bm{D}^T\bm{\Pi}_p^T & \widehat{\bm{\Gamma}}(\bm{y})-\sum_{k=1}^{p-1}\bm{C}_k
\end{bmatrix}\bm{y} \right) \succeq 0.
\end{aligned}
\label{AD-prior-posterior-mSOS}
\end{equation*}
This problem is non-linear because of the terms 
$$ \begin{aligned}
  \bm{M}_{r-r_G}&\left( \begin{bmatrix}
   \bm{0} & \bm{\Pi}_k\bm{D} \\
  \bm{D}^T\bm{\Pi}_k^T & \bm{C}_k
\end{bmatrix}\bm{y} \right)= \\&\begin{bmatrix}
   \bm{0} & \bm{\Pi}_k\bm{D}\otimes \mathscr{L}_{\bm{y}}(\bm{b}_{r-r_G}(\bm{x})^T\bm{b}_{r-r_G}(\bm{x})) \\
  \mathscr{L}_{\bm{y}}(\bm{b}_{r-r_G}(\bm{x})^T\bm{b}_{r-r_G}(\bm{x}))\otimes \bm{D}^T\bm{\Pi}_k^T & \bm{C}_k\otimes \mathscr{L}_{\bm{y}}(\bm{b}_{r-r_G}(\bm{x})^T\bm{b}_{r-r_G}(\bm{x}))
\end{bmatrix}. \end{aligned}$$ However, we can see the matrix variables $\widehat{\bm{D}}$ and  $\widehat{\bm{C}}$ in Problem \eqref{AD-posterior-lassereSOS}  as a component-wise linearization of the previous non-linear terms, i.e., if we replace each entry\\ $\left[\bm{\Pi}_k\bm{D}\otimes \mathscr{L}_{\bm{y}}(\bm{b}_{r-r_G}(\bm{x})^T\bm{b}_{r-r_G}(\bm{x}))\right]_{ij} $ with $(\widehat{\bm{D}})_{ij}$, and each entry $\left[\bm{C}_k\otimes \mathscr{L}_{\bm{y}}(\bm{b}_{r-r_G}(\bm{x})^T\bm{b}_{r-r_G}(\bm{x}))\right]_{ij} $ with $(\widehat{\bm{C}}_k)_{ij}$, we recover exactly the problem \eqref{AD-posterior-lassereSOS}. 

Consequently, this shows that this "relaxation of relaxation" hierarchy is monotonically convergent with the relaxation order $r$, thanks to Theorem \ref{Arrow-decomposition-PMI-posterior-mSOS}. 
Thus, there is no need to add norm constraints to the additional variables in \eqref{AD-prior-mSOS} to maintain convergence.

    {\color{red}

}
\section{Application: topology optimization of frame structures}
\label{sec:frames}

This section is devoted to illustrating the arrow decomposition method when applied to topology optimization problems. In topology optimization problems, we aim to design structures that provide the optimal mechanical response to external inputs such as forces. Consider a discretized design domain comprising $n_\mathrm{n} \in \mathbb{N}$ nodes (representing joints) connected by $n_\mathrm{e} \in \mathbb{N}$ potential Euler-Bernoulli finite elements. External forces $\bm{f} \in \mathbb{R}^{n_\mathrm{dof}}$ are applied to specific nodes of the structure, with $n_\mathrm{dof}$ denoting the number of degrees of freedom. The optimization task involves finding the continuous cross-sectional areas $\bm{x} \in \mathbb{R}^{n_\mathrm{e}}_{\ge0}$ of individual finite elements that yield the most efficient design. This efficiency is evaluated with respect to two functions: the compliance $\gamma(\bm{x})$, which characterizes the inverse of the structure's stiffness, and the weight $w(\bm{x})$, which quantifies the total amount of material used.

For simplicity, here we consider a single-load scenario, i.e., there is only one set of loads applied to the structure. We refer the reader to \cite{tyburec2021global,tyburec2022global} for an extension to the more general case of multiple loading scenarios. At each element $e$, the local displacement vector $\bm{u}_e$ corresponds to three degrees of freedom: translations in the $x$- and $y$-directions, and a rotation. The global displacement vector of the structure, denoted by $\bm{u}$, is obtained by concatenating the local displacements $\bm{u}_e$ across all elements $e$.  Therefore, here, we define the compliance function as the potential energy of the loads $\bm{f}$ as

\begin{equation}
    \gamma := \bm{f}^\mathrm{T} \bm{u},
    \label{compliance-definition}
\end{equation}
in which $\bm{u} \in \mathbb{R}^{n_\mathrm{dof}}$ solves the elastic equilibrium
\begin{equation}
    \bm{K} (\bm{x}) \bm{u} = \bm{f}(\x).
    \label{elasticity-equation}
\end{equation}
In \eqref{elasticity-equation}, $\bm{K}(\bm{x}) \in \mathbb{S}^{n_{\text{dof}}}$ is the corresponding symmetric positive semidefinite stiffness matrix assembled as
\begin{equation}
    \bm{K} (\bm{x})=\bm{K}_{0}+\sum_{e=1}^{n_e} \left[\bm{K}_{e}^{(1)}x_e+\bm{K}_{e}^{(2)}x_e^2+\bm{K}_{e}^{(3)}x_e^3 \right].
     \label{stifness_matrix}
 \end{equation}
Here, the matrix $ \bm{K}_{0} \succeq 0$ constitutes a design-independent structural stiffness, $\bm{K}_e^{(1)} $ represents the element $e$ membrane stiffness, and the matrices $\bm{K}_e^{(2)}$ and $\bm{K}_e^{(3)}$ encompass the bending effects. By construction, we have $ \bm{K}_{e}^{(i)} \succeq 0$ for all $e$ and $ i \in \lbrace 1, 2, 3 \rbrace $. Moreover, we assume that $\forall \bm{x} >\bm{0}: \bm{K}(\bm{x}) \succ 0$ as, otherwise, the structure would behave as a rigid body mechanism.

\subsection{Compliance minimization problem}
Given a maximum weight $\overline{w} \in \mathbb{R}_{> 0}$, finding the minimum-compliant structure is formulated as
\begin{minie}|s|%
{\strut \bm{x}, \bm{u}}%
 { \bm{f}(\bm{x})^T\bm{u}\label{compliance_obj}}%
{\label{compliance_problem}}%
{\gamma^*=}
\addConstraint{\bm{K}(\bm{x})\bm{u}-\bm{f}(\bm{x})}{= \bm{0}\label{compliance_const1}}
\addConstraint{ \overline{w}-\sum_{e=1}^{n_\mathrm{e}}\ell_e \rho_e x_e}{\geq  0 \label{compliance_const2}}
\addConstraint{\bm{x}}{\geq \bm{0}, \label{compliance_const3}}
\end{minie}
where the weight $w(\bm{x}) = \sum_{e=1}^{n_\mathrm{e}} \ell_e \rho_e x_e$ is computed from the element lengths $\bm{\ell} \in \mathbb{R}^{n_\mathrm{e}}_{>0}$ and their densities $\bm{\rho} \in \mathbb{R}^{n_\mathrm{e}}_{>0}$.

The state variables $\bm{u}$ can be eliminated from the formulations by introducing a slack variable $\gamma$ and by using Equations \eqref{elasticity-equation} and \eqref{compliance-definition}, leading to the following problem 

\begin{minie}|s|%
{\strut \bm{x}, \gamma}%
 { \gamma\label{compliance_obj_eliminating_displacements}}%
{\label{compliance_problem_eliminating_displacements}}%
{}
\addConstraint{\gamma-\bm{f}(\bm{x})^T[\bm{K}(\bm{x})]^\dagger\bm{f}(\bm{x})}{\geq \bm{0}\label{compliance_const_eliminating_displacements_1}}
\addConstraint{ \overline{w}-\sum_{e=1}^{n_\mathrm{e}}\ell_e \rho_e x_e}{\geq  0 \label{compliance_const_eliminating_displacements_2}}
\addConstraint{\bm{x}}{\geq \bm{0} \label{compliance_const_eliminating_displacements_3}} 
\addConstraint{\bm{f}(\bm{x})}{\in \text{Im}(\bm{K}(\bm{x})), \label{compliance_const_eliminating_displacements_4}}
\end{minie}
where the constraint \eqref{compliance_const_eliminating_displacements_4} is a necessary and sufficient condition to have $\bm{u}=\bm{K}(\bm{x})^\dagger\bm{f}(\bm{x})$ (see \cite[Lemma 1]{tyburec2021global}). Moreover, it is shown in \cite[Proposition 1]{tyburec2021global} that the condition \eqref{compliance_const_eliminating_displacements_4} is equivalent to the condition 
\begin{equation}
    \bm{K}(\bm{x})[\bm{K}(\bm{x})]^\dagger \bm{f}(\bm{x})=\bm{f}(\bm{x}).
    \label{equivalence-image}
\end{equation}
Using Lemma \ref{shur-complement}, we can derive the equivalent polynomial semidefinite problem
\begin{mini!}|s|%
{\strut \bm{x}, \gamma}%
 {  \gamma \label{compliance_obj_sdp}}%
{\label{compliance_problem_sdp}}%
{\gamma^*=}
\addConstraint{\begin{pmatrix}
    \bm{K}(\bm{x}) & -\bm{f}(\bm{x})^T \\ -\bm{f}(\bm{x}) & \gamma
\end{pmatrix}}{\succeq 0 \label{compliance_const1_sdp}}
\addConstraint{\overline{w}-\sum_{e=1}^{n_\mathrm{e}} \ell_e \rho_e x_e}{\ge  0 \label{compliance_const2_sdp}}
\addConstraint{\bm{x}}{\geq \bm{0},\label{compliance_const3_sdp}}
\end{mini!}
in which the polynomial matrix inequality \eqref{compliance_const1_sdp} enforces the equilibrium.

\subsubsection{Weight minimization problem}
Similarly to the compliance minimization problem, we can write a polynomial SDP formulation of the weight minimization problem only in terms of $\bm{x}$ as
\begin{mini!}|s|%
{\strut \bm{x}}%
 { \sum_{e=1}^{n_\mathrm{e}} \ell_e \rho_e x_e \label{weight_obj_sdp}}%
{\label{weight_problem_sdp}}%
{}
\addConstraint{\begin{pmatrix}
    \bm{K}(\bm{x})& -\bm{f}(\bm{x}) \\ -\bm{f}(\bm{x})^T &  \overline{\gamma}
\end{pmatrix}}{\succeq 0\label{weight_const1_sdp}}
\addConstraint{\bm{x}}{\geq \bm{0},\label{weight_const2_sdp}}
\end{mini!}
where $\overline{\gamma}$ here represents a fixed upper bound for the compliance.
Moreover, for both problems \eqref{compliance_problem_sdp} and \eqref{weight_problem_sdp}, it is shown in \cite{tyburec2022global,tyburec2021global} that the problem \eqref{compliance_problem_sdp} can be compactified and individual variables scaled to the $[-1,1]$ domain. On the one hand, this improves numerical performance, and, on the other hand, ensures that these problems satisfy the Archimedean assumption \ref{assumption-archimedean}. Thus, assumptions for the convergence of the moment SOS hierarchy are satisfied (Section \ref{sec:mSOS}). 

In addition to lower bounds obtained by solving relaxations of the Lasserre hierarchy, in \cite{tyburec2022global,tyburec2021global}, we also derived a simple procedure for evaluating feasible upper bounds. When the gap between these bounds is small, the mSOS provides approximately globally optimal solution.

\subsection{Decomposition of the compliance minimization problem}
In this section, we explicitly constuct the arrow decomposed formulation for the case of compliance minimization \eqref{compliance_problem_sdp}. Because the result for the weight optimization settings \eqref{weight_problem_sdp} is analogous, we do not detail it here.


Let us denote by $\mathcal{D}:= [n_\mathrm{e}]$ the domain of potential elements and partition $\mathcal{D}$ into $p$ subdomains $\mathcal{D}_k$, $k \in [p]$, where each $\mathcal{D}_k$ can contain one or more elements $e$. For each subdomain $\mathcal{D}_k$, we thus receive the stiffness matrix $\bm{K}_{k}$ that is built as $\bm{K(x)}=\sum_{k \in D_k}\bm{K}_{k}(\bm{x}).$ Recall that by construction, we have $\bm{K}_k(\bm{x}) \succeq 0$, so that the assumption of Theorem \ref{Kocvara-AD-PSD} holds.

Next, thanks to the link between AD and domain decomposition methods in \cite{kovcvara2021decomposition}, we can define the partitioning of the matrix $\bm{K}_k(\bm{x})$ according to the subdomains $\mathcal{D}_k$. Let $\mathcal{L}_k$ denote the set of indices of degrees of freedom \footnote{Here, the total degrees of freedom of the problem play the role of the index set of the matrix $\bm{K}(\x)$.} located in the interior of $\mathcal{D}_k$, i.e., $\mathcal{L}_k = \mathcal{I}_k \setminus \bigcup_{\ell: \ell>k} \mathcal{I}_{k,\ell}$ 
%
%
and the set $\mathcal{J}_k$ of the indices of all remaining degrees of freedom in one of the intersection $\mathcal{I}_{k,\ell}$ or $\mathcal{I}_{\ell,k}$, i.e.
$\mathcal{J}_k = \bigcup_{\ell: \ell>k} \mathcal{I}_{k,\ell}$. Then, we can permute and partition each matrix $\bm{K}_{k}(\bm{x})$, $\bm{f}(\x)$ as follows:
\begin{equation}
    \bm{K}_{k}(\bm{x})= \begin{bmatrix} \bm{K}_{\mathcal{L}_k\mathcal{L}_k}(\bm{x}) & \bm{K}_{\mathcal{L}_k\mathcal{J}_k}(\bm{x})  \\ \bm{K}_{\mathcal{J}_k\mathcal{L}_k}(\bm{x}) & \bm{K}_{\mathcal{J}_k\mathcal{J}_k}(\bm{x}) 
\end{bmatrix}, ~\bm{f}_k(\x)=\begin{bmatrix}
    \bm{f}_{\mathcal{L}_k} \\ \bm{f}_{\mathcal{J}_k}
\end{bmatrix}.
    \label{stifness-matrices-AD}
\end{equation}
Let $r$ be a relaxation order, and $\y$ a pseudo-moment vector related to the variables $\x$ and $\gamma$. The arrow decomposed mSOS relaxations for the compliance optimization problem \eqref{compliance_const3_sdp} reads as \begin{equation*}
\begin{aligned}
  \gamma_{\text{AD,r}}^*=&\underset{\substack{\y, \bm{d}, \bm{c} }}{\min} \mathscr{L}_{\y}(\gamma),  \\
 \text{s.t. }& y_{\bm{0}}=1, \\
 &\M_r(\y) \succeq 0, \\
 &\M_{r-r_{K}}\left( \bm{Z}_k \y\right)+\begin{bmatrix}
    \bm{0} & (\bm{\Pi}_k\otimes \bm{I}_{L_{r-r_{K}}})\bm{d} \\ \bm{d}^T(\bm{\Pi}_k\otimes \bm{I}_{L_{r-r_{K}}})^T & c_k
\end{bmatrix}\succeq 0, k \in [p-1],\\
& \M_{r-r_{K}}\left( \bm{Z}_p \y\right)+\begin{bmatrix}
     \bm{K}_{p}(\bm{x}) & (\bm{\Pi}_p\otimes \bm{I}_{L_{r-r_{K}}})\bm{d} \\
   \bm{d}^T(\bm{\Pi}_p\otimes \bm{I}_{L_{r-r_{K}}})^T & \gamma- \sum_{k=1}^{p-1} c_k
\end{bmatrix} \succeq 0,\\
 & \M_{r-1}\left((\overline{V}-\sum_{i=1}^{n_e}x_i\ell_i )\y\right) \succeq 0, \\
 & \M_{r-1}(x_i\y) \succeq 0, i \in [n_e],
\end{aligned}
\end{equation*}
with $\bm{Z}_k(\x)=\begin{bmatrix}
   \bm{K}_{k}(\bm{x}) & \bm{f}_{k}(\bm{x}) \\
   \bm{f}_{k}(\bm{x})^T & \bm{0}
\end{bmatrix}$ for all $k \in [p]$, and $r_K=\lceil \frac{\text{deg}(\bm{K})}{2}\rceil$.
The convergence of the sequence $\gamma_{\text{AD,r}}^*$ to $\gamma^*$ as $r \rightarrow \infty$ is ensured by Theorem \ref{Arrow-decomposition-PMI-posterior-mSOS}.

An analysis of the additional variables $\bm{d}_k$ and their physical interpretation in the context of frame structures is provided in Appendix \ref{sec:physical}.

\section{Numerical examples}
\label{sec:numerics}

In this section, we present two numerical examples that demonstrate the computational advantages of arrow decomposition in frame structure optimization problems. We first recall some technical features when working with frame structures and mSOS hierarchies. Specifically, we highlight that a reduced monomial basis can be used to construct the mSOS hierarchies instead of the standard one. Based on the numerical results developed in \cite{handaterm}, we additionally adopt the non-mixed term (NMT) basis, which is defined by removing all mixed products from the standard monomial basis, i.e., $
\bm{b}_{r}^{\mathrm{NMT}}(\bm{x}):= \left\lbrace 1,x_1,\ldots, x_n,x_1^2, \ldots, x_n^2, \ldots, x_1^r, \ldots, x_n^r \right\rbrace.$ Moroever, we define the relative optimality gap $\varepsilon_\mathrm{r} = f_\mathrm{ub}/f_\mathrm{lb} - 1$, where $f_\mathrm{ub}$ is an upper bound. 
For compliance and weight optimization problems, this upper bound is defined in \cite{tyburec2021global} and  \cite{tyburec2022global}, respectively. 
This optimality gap provides a global optimum certificate and serves as a measure of solution quality \cite{tyburec2021global,tyburec2022global,tyburec2024globalweightoptimizationframe}. Furthermore, the optimality gap approach allows us to avoid evaluating the rank of large matrices, which is required for the rank flatness condition in \eqref{flatness}. 

  The first example examines a double-hinged beam under self-weight loading, where we analyze the scalability of AD posterior-to-mSOS approach by varying the number of finite elements from $1$ to $15$. The second example considers a $24$-element modular frame structure, showcasing the method's effectiveness on a more practical engineering problem. For both examples, we compare four solution approaches: standard moment-SOS (mSOS), moment-SOS with arrow decomposition (mSOS+AD), moment-SOS using nonmixed-term basis (mSOS+NMT), and moment-SOS combining both techniques (mSOS+NMT+AD). We evaluate these methods across different relaxation orders, analyzing their performance in terms of computation time, solution quality, and numerical stability. All computations were performed on a workstation equipped with dual Intel\textsuperscript{\textregistered} Xeon\textsuperscript{\textregistered} E5-2630 v3 processors and 128 GB of RAM using the Mosek optimizer with default parameters. Our implementation of the method and numerical examples are publicly available as a MATLAB package at \url{https://gitlab.com/tyburec/pof-dyna}.

\subsection{Double-hinged beam under selfweight}

Consider a symmetric half of a double-hinged beam as our first numerical example, see Fig. \ref{fig:doublehinged}. The beam features a pinned support at its left end restricting both translations, while its right end has a roller support preventing vertical translation and rotation. The beam is made of a linear-elastic material with Young's modulus $E = 210$~GPa and density $\rho = 7850$~kg/m$^3$. With an I-shaped cross-section, as displayed in Fig. \ref{fig:doublehinged}b, the stiffness matrix $\bm{K}(\bm{x})$ contains polynomials of degree at most 2. 
The beam is subjected to two types of loads: a uniform distributed load with intensity $\overline{f} = 1$~kN/m, and self-weight loading that depends on the design variables $\bm{x} \in \mathbb{R}^{n_\mathrm{e}}$ representing the element's cross-sectional areas. The presence of design-dependent self-weight makes this problem numerically more challenging than standard compliance minimization because, typically, a higher relaxation degree is required to solve the problem globally and the relaxation solutions are less accurate.

\begin{figure}[!htbp]
\centering
\begin{subfigure}{0.85\linewidth}
\begin{tikzpicture}
\scaling{2}
\point{a}{0}{0};
\point{b}{1}{0};
\point{c}{2}{0};
\point{d}{3}{0};
\point{e}{4}{0};
\point{f}{5}{0};
\beam{1}{a}{e};
\support{1}{a};
\support{4}{e}[90];
\lineload{2}{a}{e}[0.5][0.5][0.125];
\notation{1}{c}{$f=1$~kN/m}[above=7mm];
\dimensioning{1}{a}{e}{-1.2}[$5$~m];
\end{tikzpicture}
\caption{}
\end{subfigure}%
\hfill\begin{subfigure}{0.15\linewidth}
\begin{tikzpicture}
	\scaling{1.50}
	\point{a}{0.0}{0.0};\point{a0}{0.5}{0.0};
	\point{b}{0.0}{0.1};\point{b0}{0.5}{0.1};
	\point{c}{0.2}{0.1};\point{c0}{0.3}{0.1};
	\point{d}{0.2}{0.9};\point{d0}{0.3}{0.9};
	\point{e}{0.0}{0.9};\point{e0}{0.5}{0.9};
	\point{f}{0.0}{1.0};\point{f0}{0.5}{1.0};
	\draw[black, thick, fill=black!25] (a0) -- (b0) -- (c0) -- (d0) -- (e0) -- (f0) -- (f) -- (e) -- (d) -- (c) -- (b) -- (a) -- cycle;
	\dimensioning{2}{a}{f}{1.25}[$10t_e$];
	\dimensioning{1}{a}{a0}{1.9}[$5t_e$];
	\draw [-stealth](0.15,0.60) -- (0.35,0.85);
	\draw [-stealth](0.15,0.60) -- (0.25,1.45);
	\draw [-stealth](0.15,0.60) -- (0.25,0.05);
	\draw (-0.3,0.6) -- node[above=-0.5mm]{$t_e$}(0.025,0.6) -- (0.15,0.60);
\end{tikzpicture}
\caption{}
\end{subfigure}
\begin{subfigure}{\linewidth}
\centering
\includegraphics[width=0.45\linewidth]{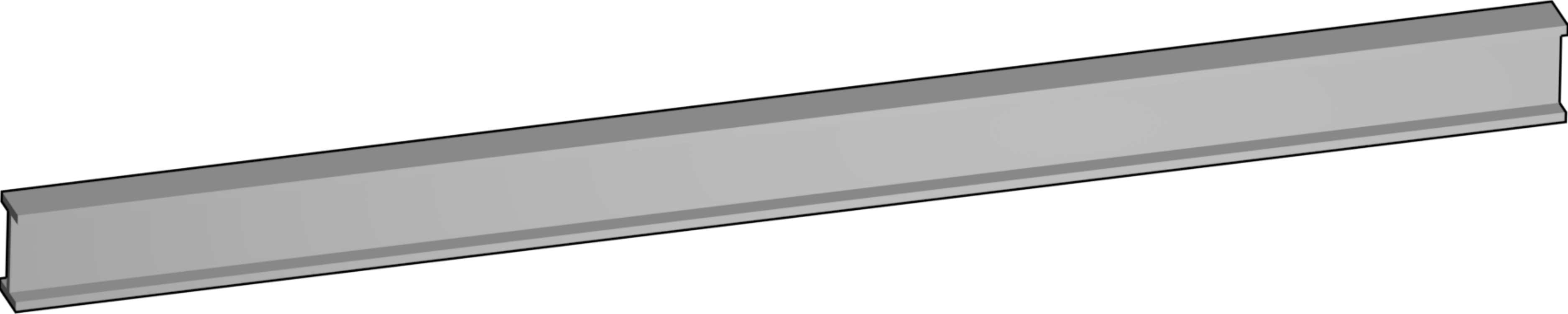}\hfill\includegraphics[width=0.45\linewidth]{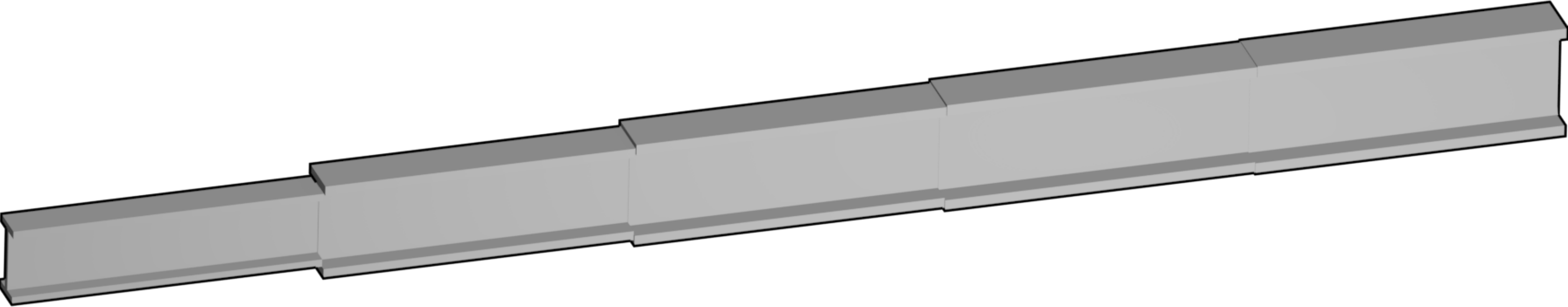}\\
\includegraphics[width=0.45\linewidth]{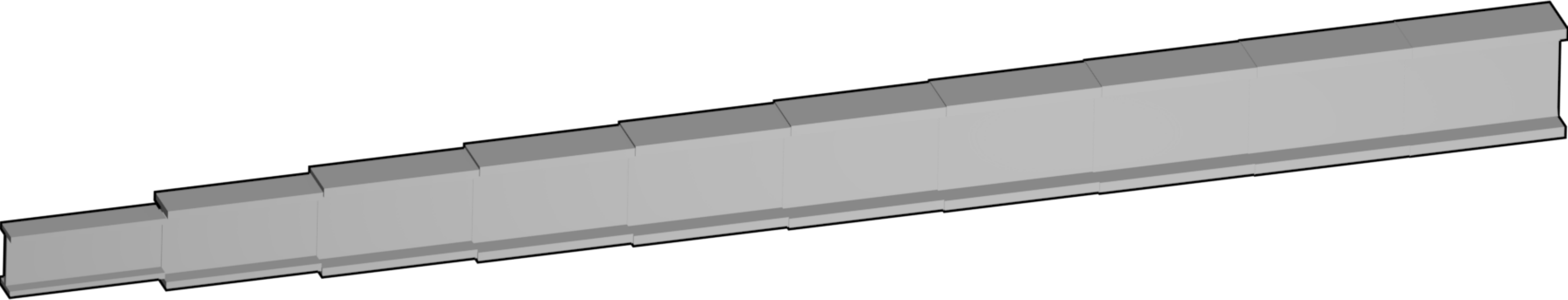}\hfill\includegraphics[width=0.45\linewidth]{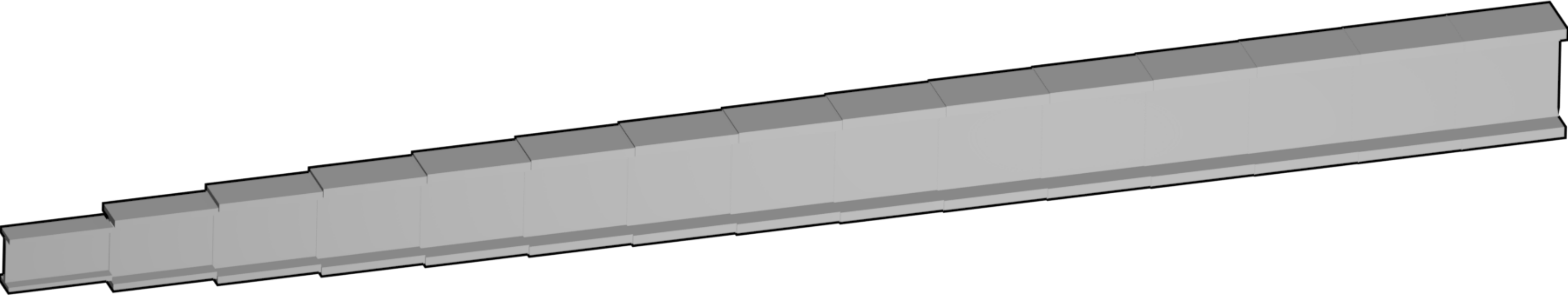}
\caption{}
\end{subfigure}
\caption{Double-hinged beam under selfweight loads: (a) boundary conditions, (b) cross-section parametrization, and (c) approximately globally-optimal designs for $n_\mathrm{e}=\{1,5,10,15\}$.}
\label{fig:doublehinged}
\end{figure}

The optimization objective is to minimize the beam's compliance while respecting a weight constraint $\overline{w} = 785$~kg, which corresponds to $0.1$~m$^3$ volume of material. For the numerical study, we discretize the $5$~m long beam half into $n_\mathrm{e}$ equal-length elements, with $n_\mathrm{e}$ ranging from $1$ to $15$. Each element $i$ thus experiences a total distributed load of $f_i(x_i) = 1 + \frac{385.435}{n_\mathrm{e}} x_i$ [kN/m], where the first term represents the external load and the second term accounts for the self-weight, with $x_i$ being the cross-sectional area of element $i$.

For the arrow decomposition, we partition the beam into $n_\mathrm{e}-1$ subdomains, where the first subdomain $\mathcal{D}_1$ contains elements $\{1,2\}$, and each subsequent subdomain $\mathcal{D}_k$ contains a single element $k+1$. This leads to index sets $\mathcal{I}_1 = \{1,2,3,4,5,6,7\}$ containing degrees of freedom associated with the first two elements, $\mathcal{I}_2 = \{5,6,7,8,9,10\}$ for the third element, and so on. The intersections of consecutive subdomains contain three degrees of freedom each, corresponding to the shared nodes between elements. For instance, $\mathcal{I}_{1,2} = \{5,6,7\}$ represents the degrees of freedom at the node shared between the domains $\mathcal{D}_1$ and $\mathcal{D}_2$.

For all discretizations, the variable reduction procedure from Section \ref{sec:po_variable_reduction} leads to $|\bm{D}_\mathrm{red}| = 1$. This corresponds directly to the single degree of statical indeterminacy in the double-hinged beam -- physically, only one redundant force cannot be determined purely from equilibrium equations.

\begin{figure}[htbp]
  \centering
  \subcaptionbox{Solving time in function of the number of elements}[0.5\textwidth]{%
    \includegraphics[width=0.5\textwidth]{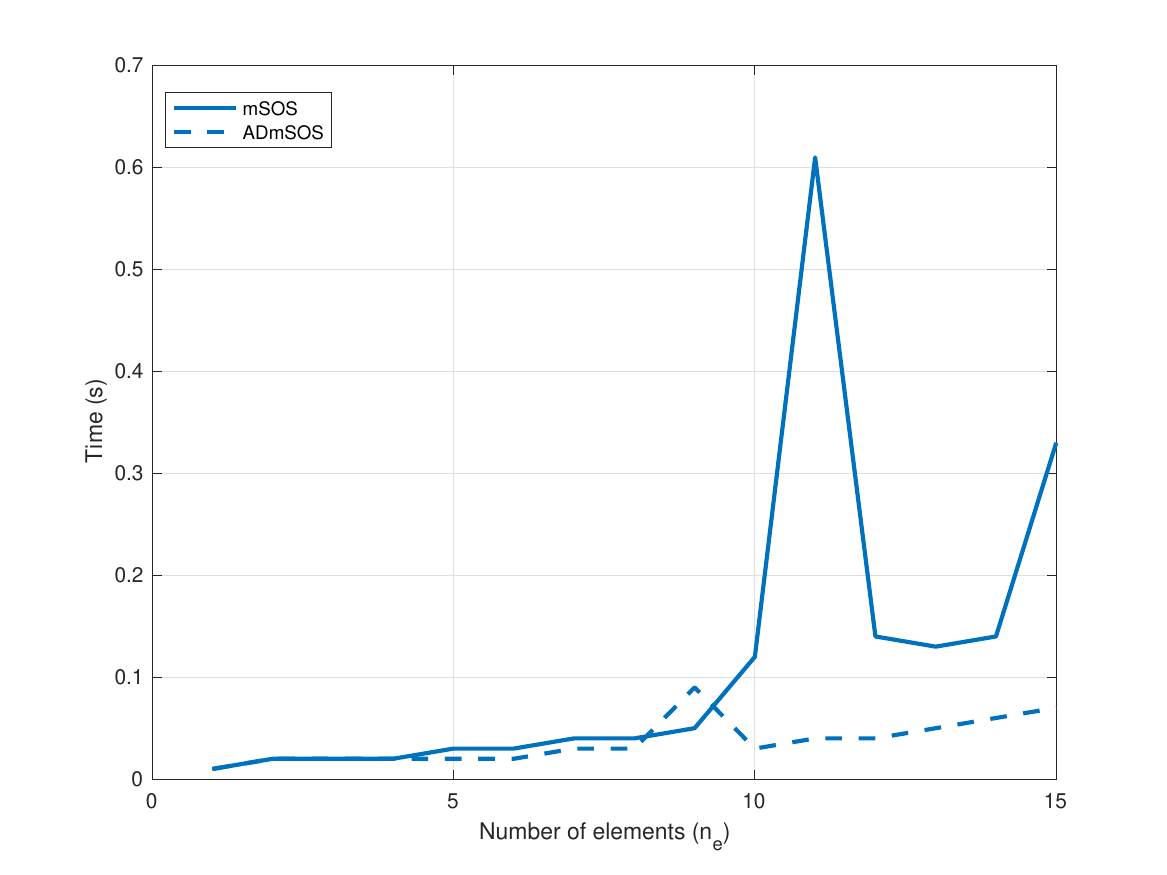}}
 \hspace*{-0.5cm} 
\hfill 
  \subcaptionbox{Total number of relaxation variables and size of the largest SDP matrix}[0.5\textwidth]{%
    \includegraphics[width=0.5\textwidth]{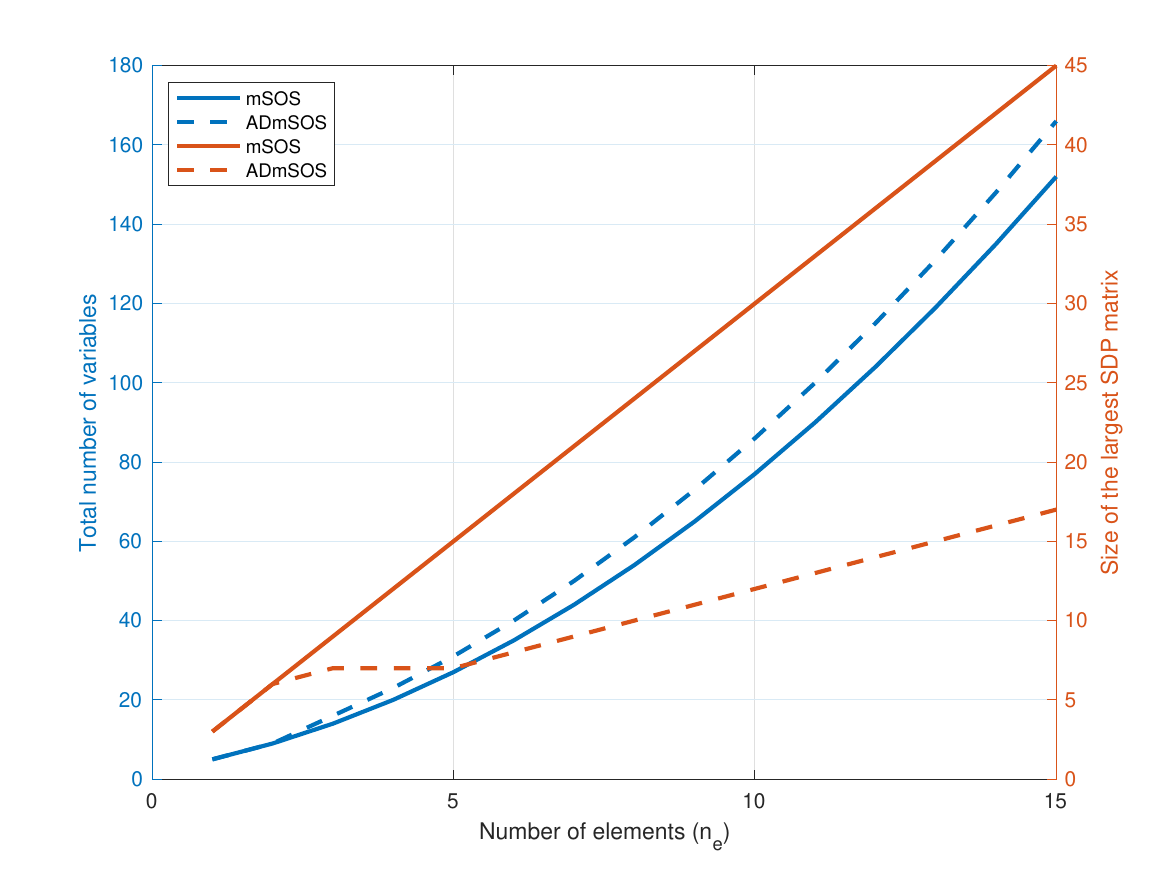}%
\label{fig:sub2}
  }
  \caption{ Comparison of mSOS and ADmSOS for the first-order relaxation of a double-hinged beam problem  }
  \label{fig:mSOS-rel1}
\end{figure}

\begin{figure}[htbp]
  \centering
  \subcaptionbox{Solving time in function of the number of elements}{%
    \includegraphics[width=0.5\textwidth]{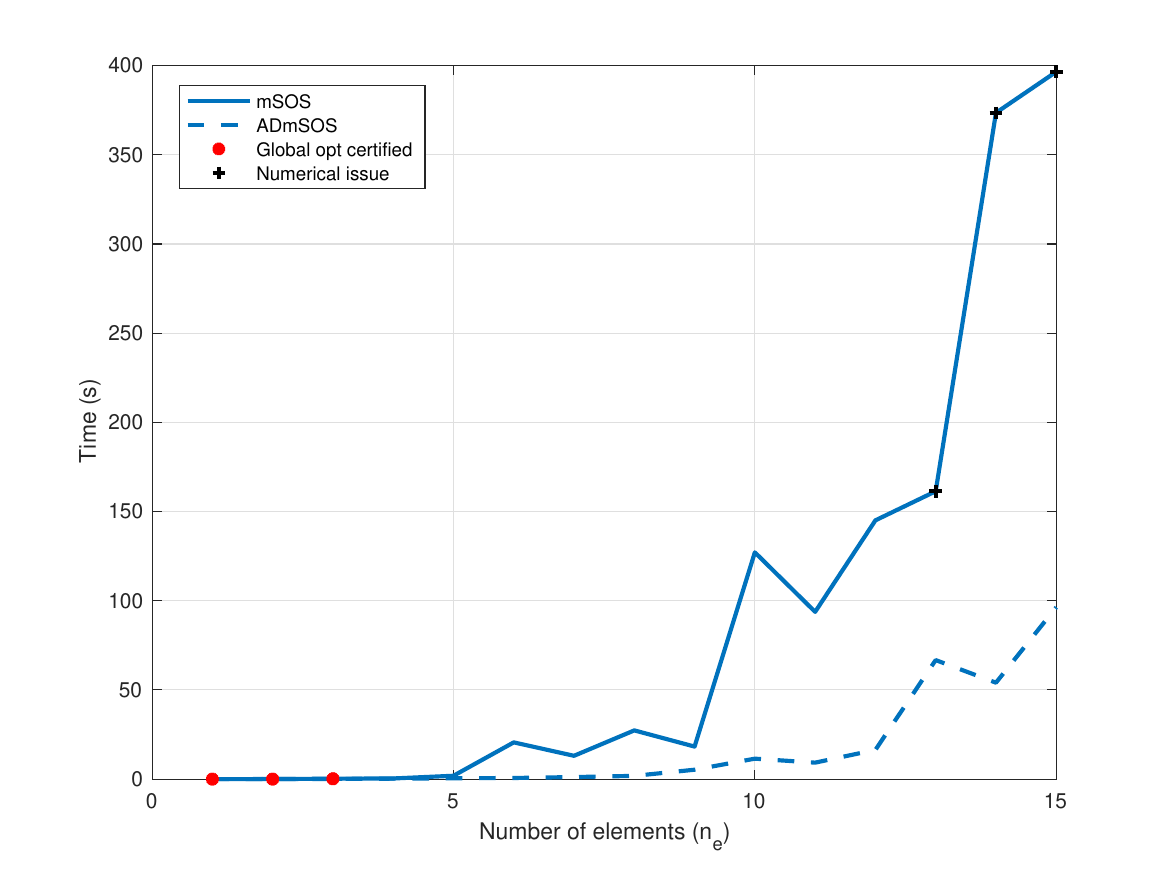}%

  }
 \hspace*{-0.5cm} \hfill
  \subcaptionbox{Total number of relaxation variables and size of the largest SDP matrix}{%
    \includegraphics[width=0.5\textwidth]{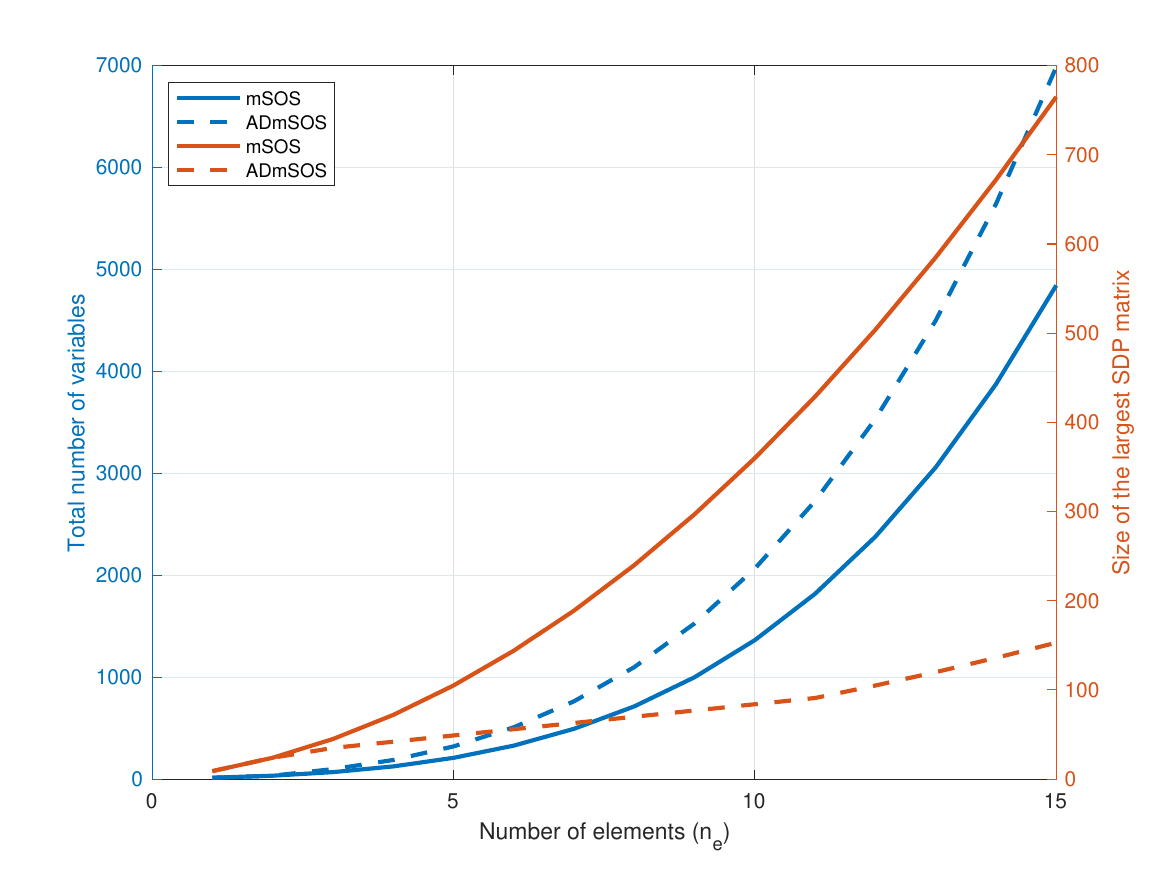}%

  }
  \caption{ Comparison of mSOS and ADmSOS for the second-order relaxation of a double-hinged beam problem  }
  \label{fig:mSOS-rel2}
\end{figure}

\begin{figure}[htbp]
  \centering
  \subcaptionbox{Solving time in function of the number of elements}{%
    \includegraphics[width=0.5\textwidth]{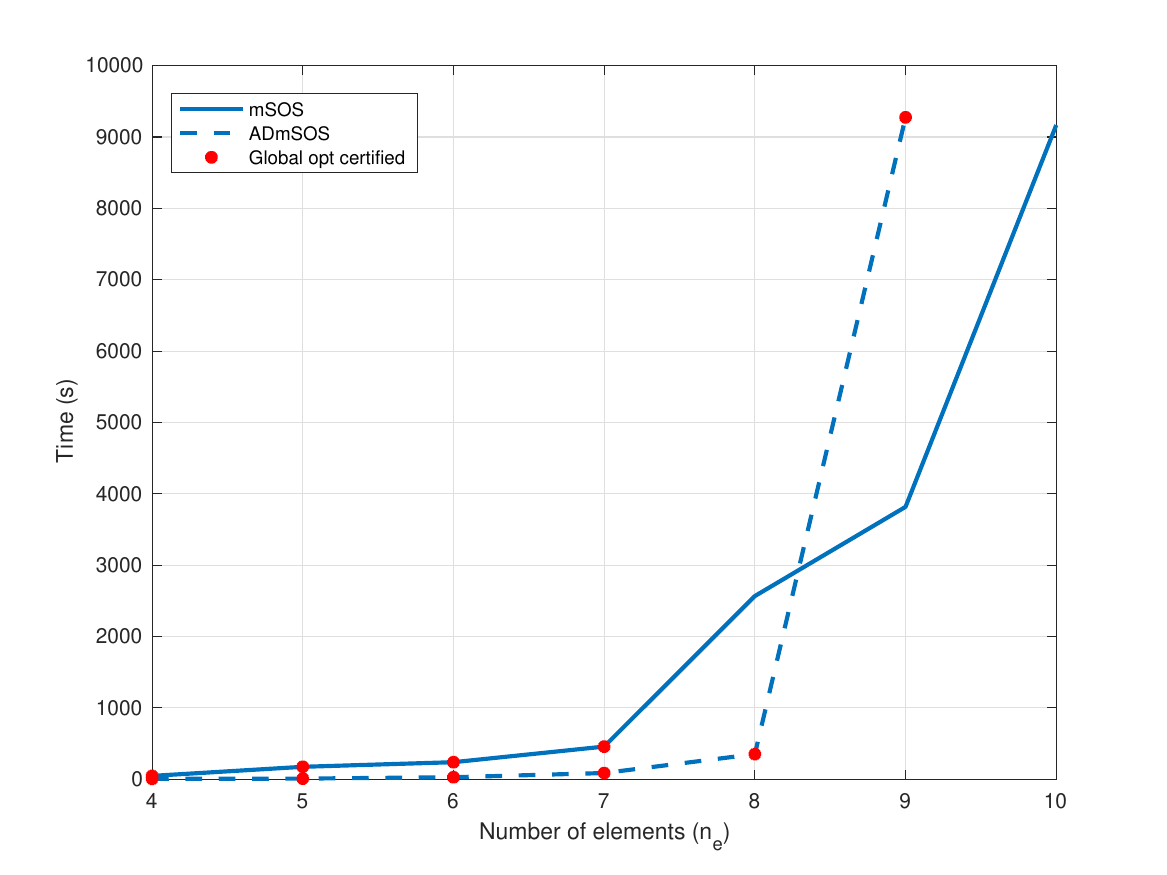}%
  }
 \hspace*{-0.5cm} \hfill
  \subcaptionbox{Total number of relaxation variables and size of the largest SDP matrix}{%
    \includegraphics[width=0.5\textwidth]{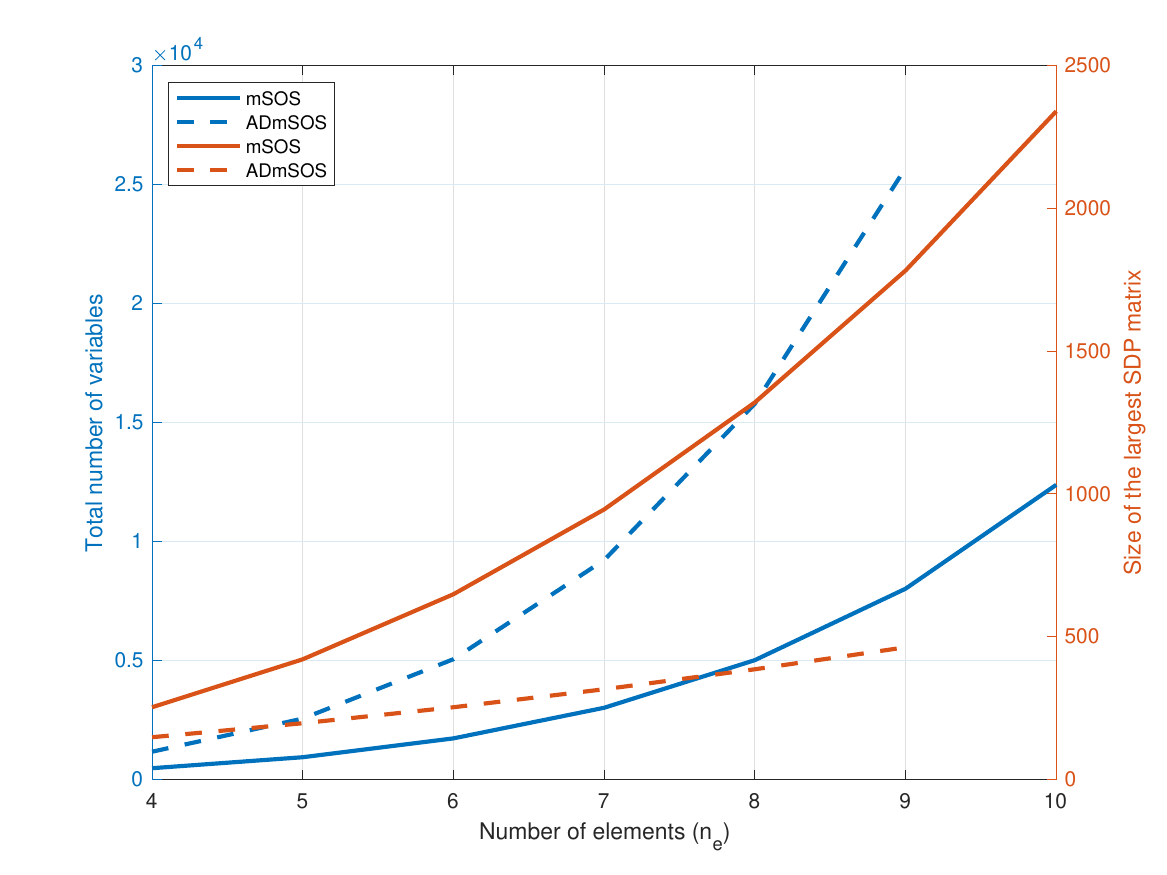}%
  }
  \caption{ Comparison of mSOS and ADmSOS for the third-order relaxation of a double-hinged beam problem  }
  \label{fig:mSOS-rel3}
\end{figure}

\begin{figure}[htbp]
  \centering
  \subcaptionbox{Solving time in function of the number of elements}{%
    \includegraphics[width=0.5\textwidth]{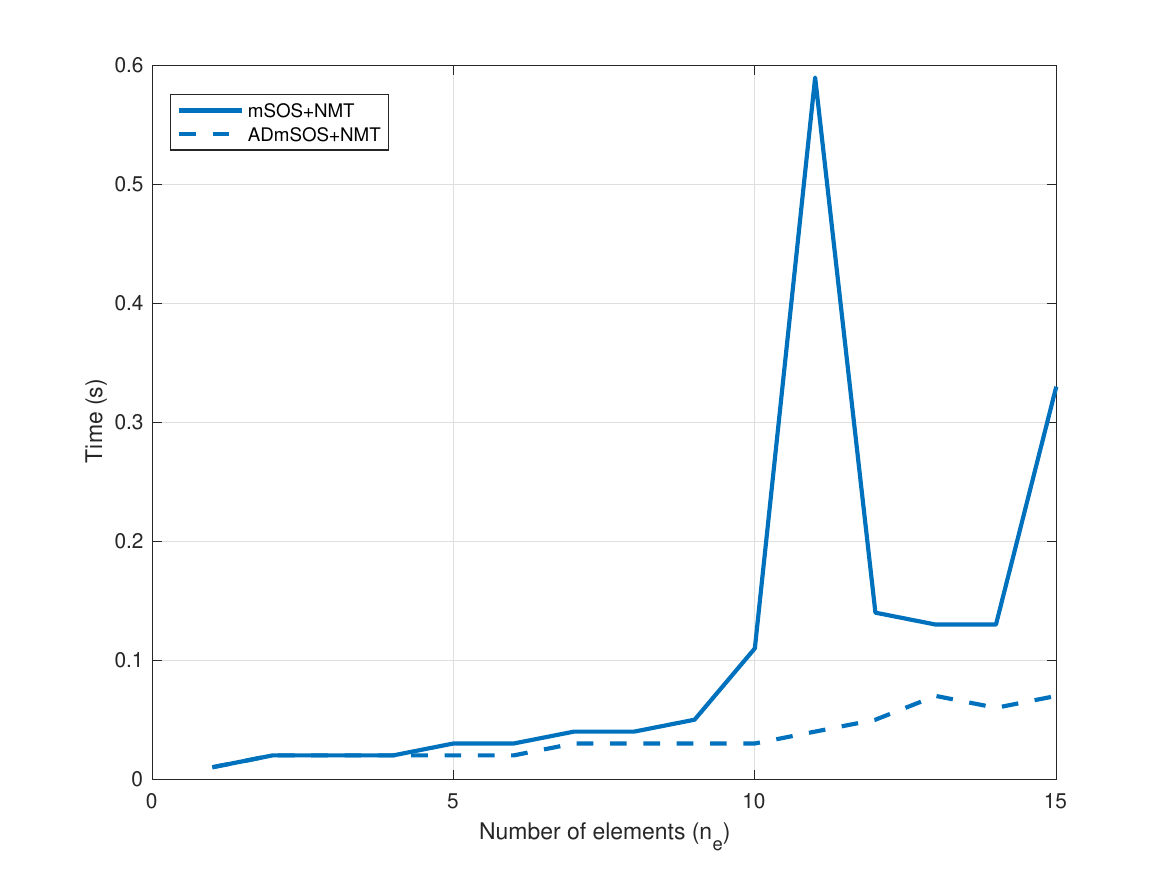}%
  }
 \hspace*{-0.5cm} \hfill
  \subcaptionbox{Total number of relaxation variables and size of the largest SDP matrix}{%
    \includegraphics[width=0.5\textwidth]{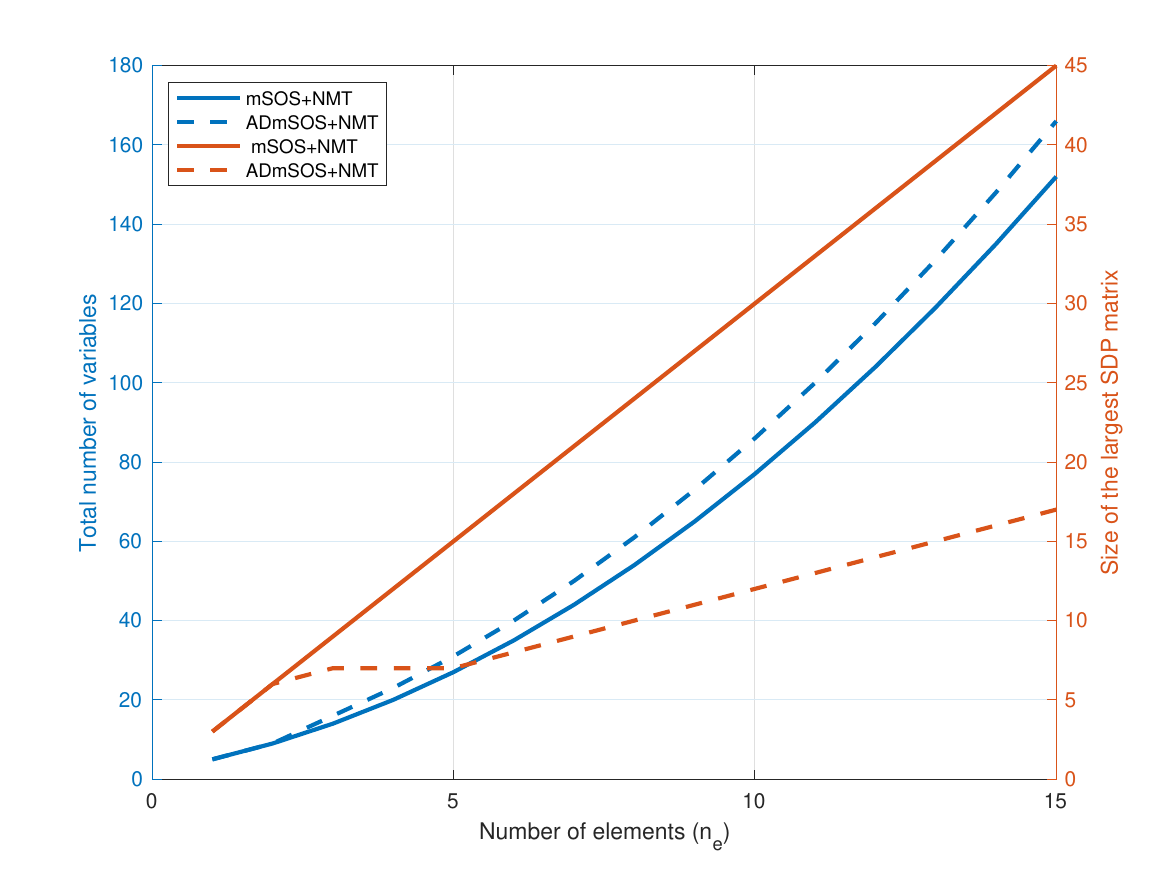}%
  }
  \caption{ Comparison of mSOS+NMT and ADmSOS+NMT for the first-order relaxation of a double-hinged beam problem  }
  \label{fig:NMT-rel1}
\end{figure}

\begin{figure}[htbp]
  \centering
  \subcaptionbox{Solving time in function of the number of elements}{%
    \includegraphics[width=0.5\textwidth]{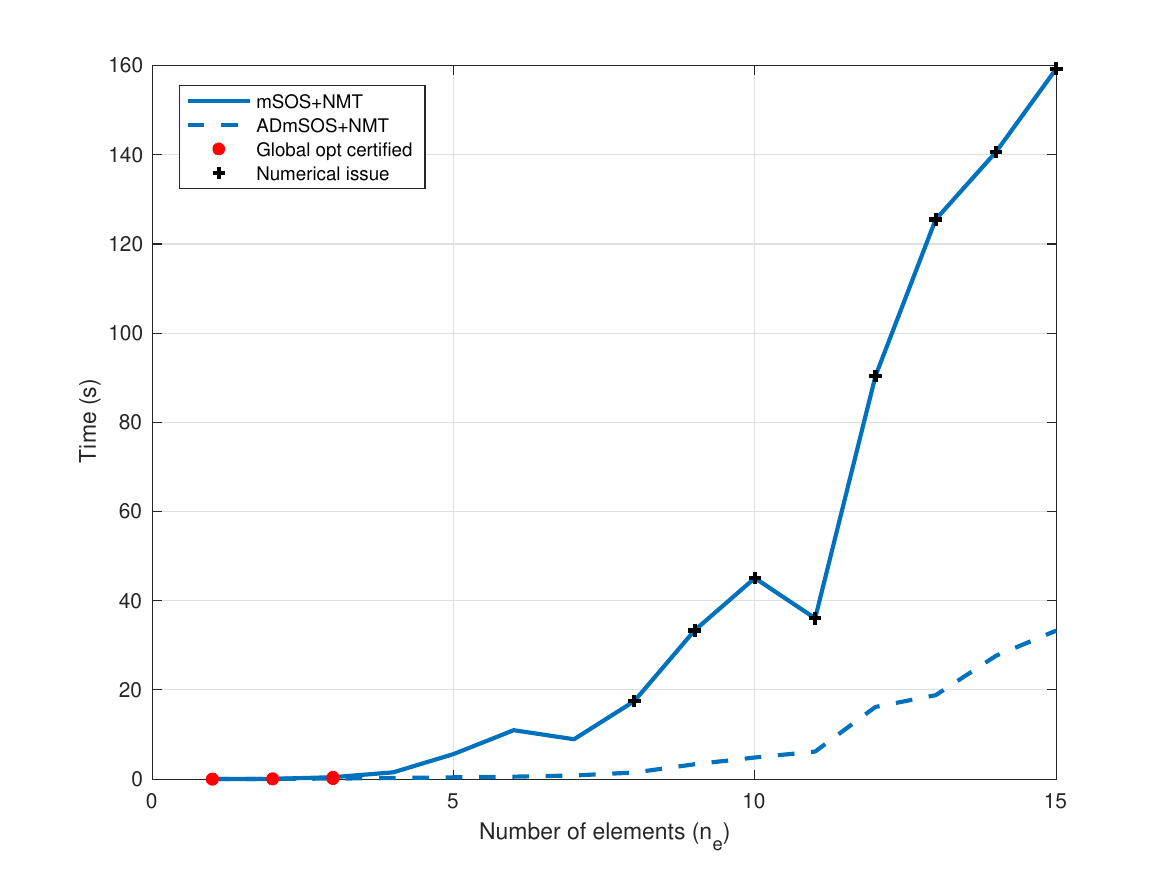}%
   
  }
 \hspace*{-0.5cm} \hfill
  \subcaptionbox{Total number of relaxation variables and size of the largest SDP matrix}{%
    \includegraphics[width=0.5\textwidth]{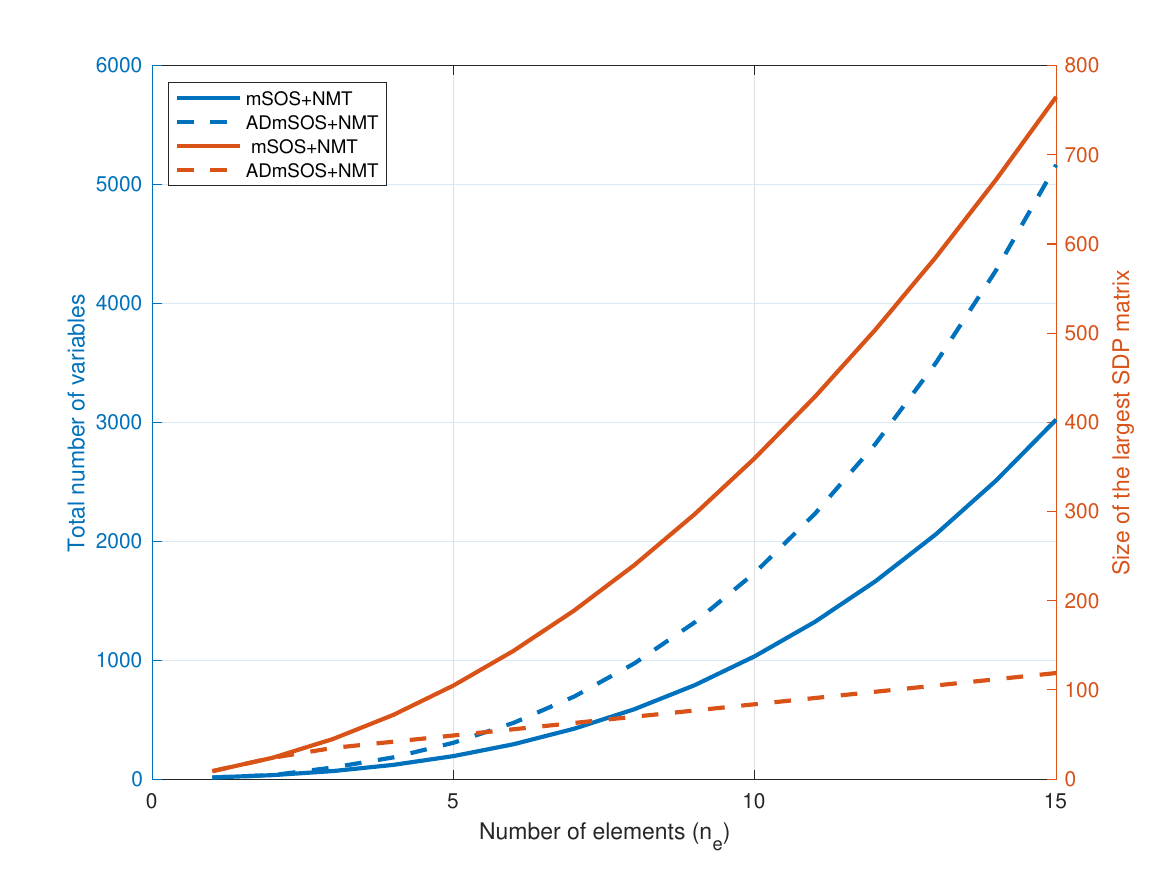}%
    
  }
  \caption{ Comparison of mSOS+NMT and ADmSOS+NMT for the second-order relaxation of a double-hinged beam problem  }
  \label{fig:NMT-rel2}
\end{figure}

\begin{figure}[htbp]
  \centering
  \subcaptionbox{Solving time in function of the number of elements}{%
    \includegraphics[width=0.5\textwidth]{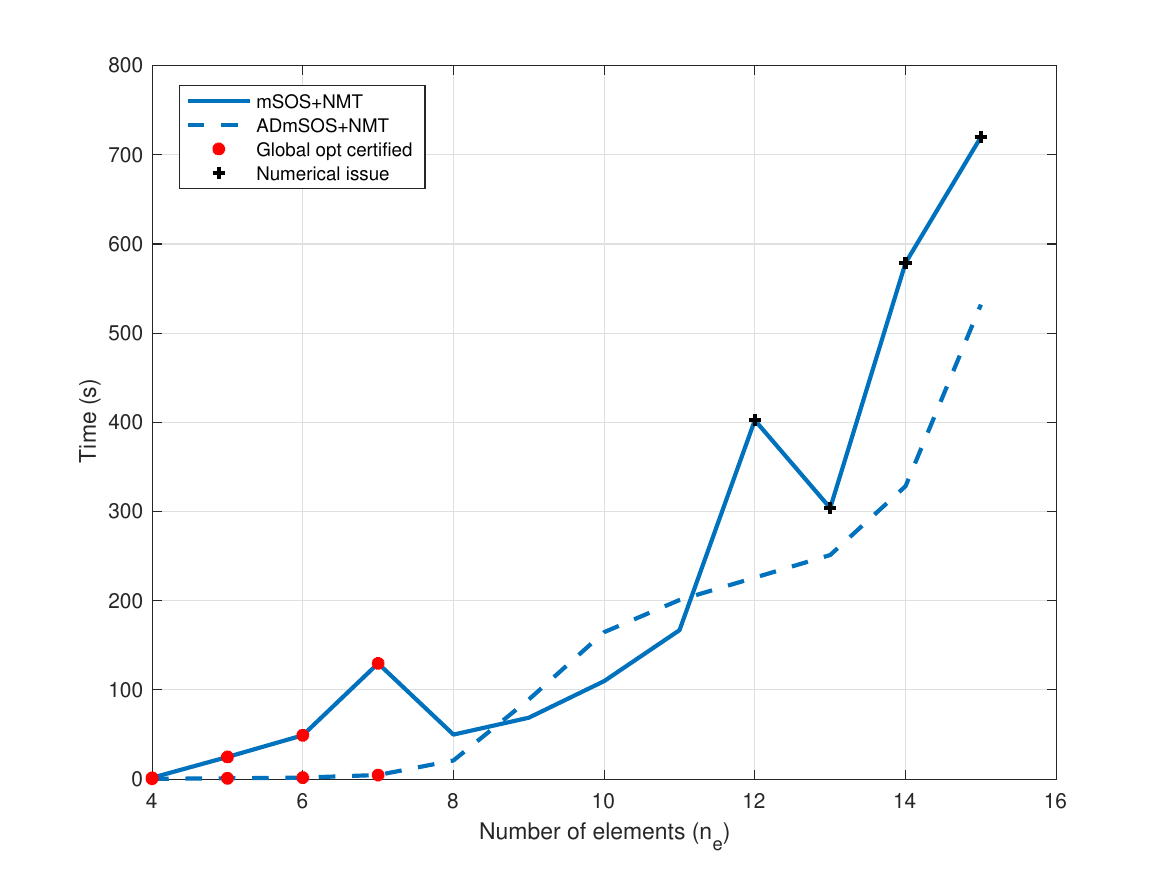}%
 
  }
 \hspace*{-0.5cm} \hfill
  \subcaptionbox{Total number of relaxation variables and size of the largest SDP matrix}{%
    \includegraphics[width=0.5\textwidth]{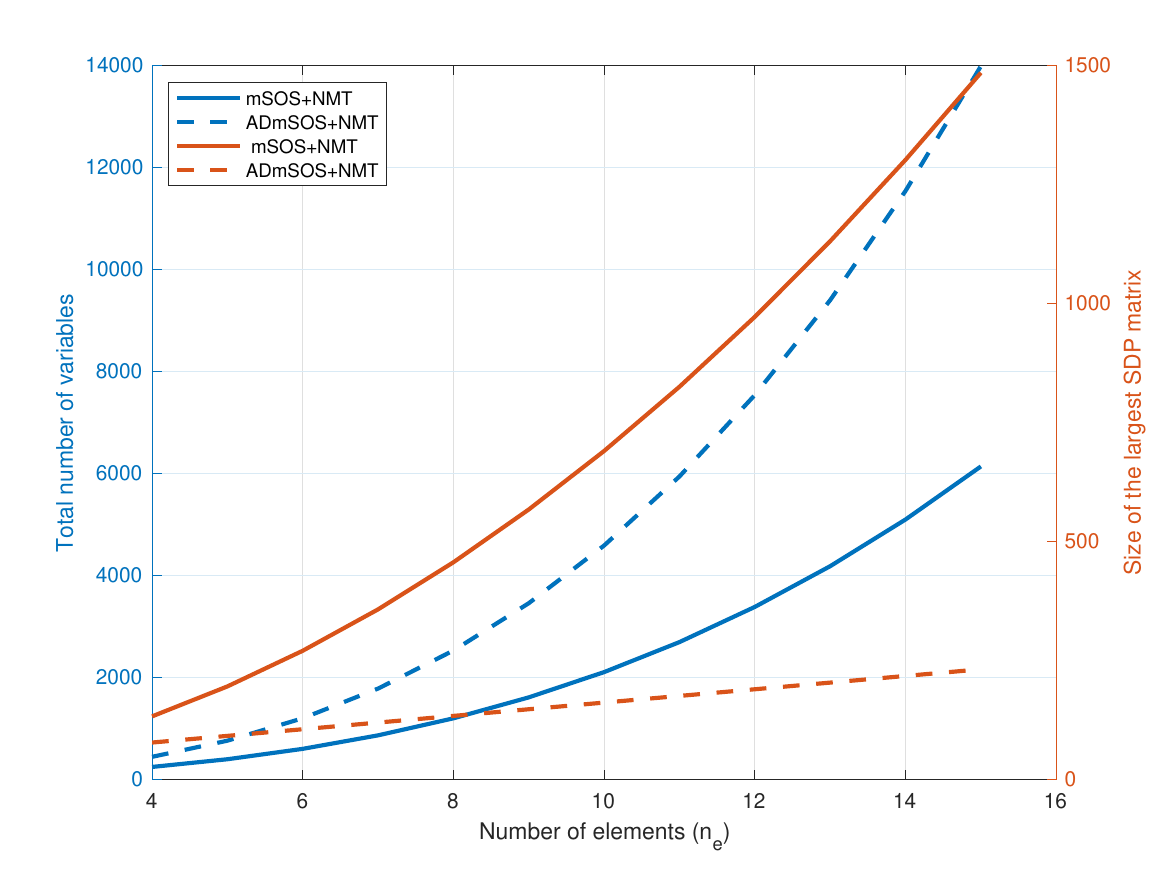}%

  }
  \caption{ Comparison of mSOS+NMT and ADmSOS+NMT for the third-order relaxation of a double-hinged beam problem  }
 \label{fig:NMT-rel3}
\end{figure}

\begin{figure}[htbp]
  \centering
  \subcaptionbox{Solving time in function of the number of elements}{%
    \includegraphics[width=0.5\textwidth]{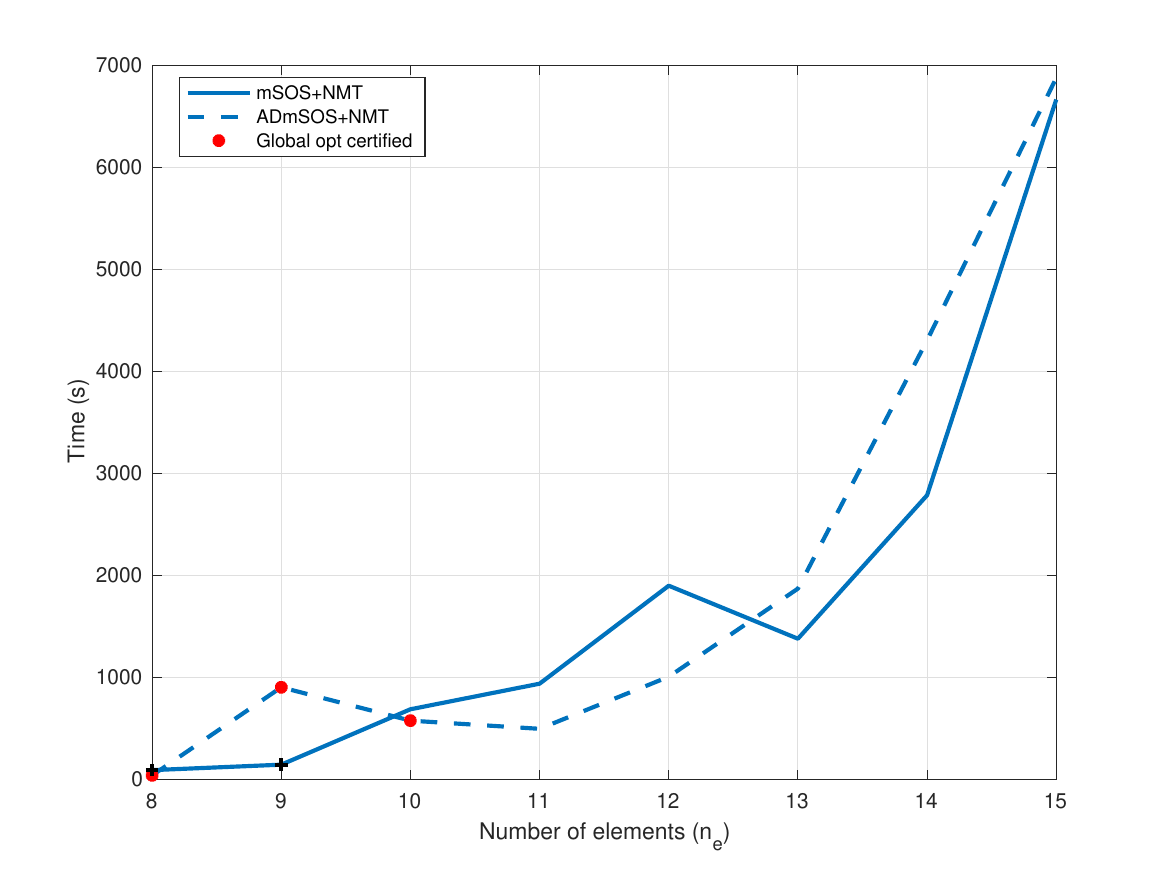}%
  }
 \hspace*{-0.5cm} \hfill
  \subcaptionbox{Total number of relaxation variables and size of the largest SDP matrix}{%
    \includegraphics[width=0.5\textwidth]{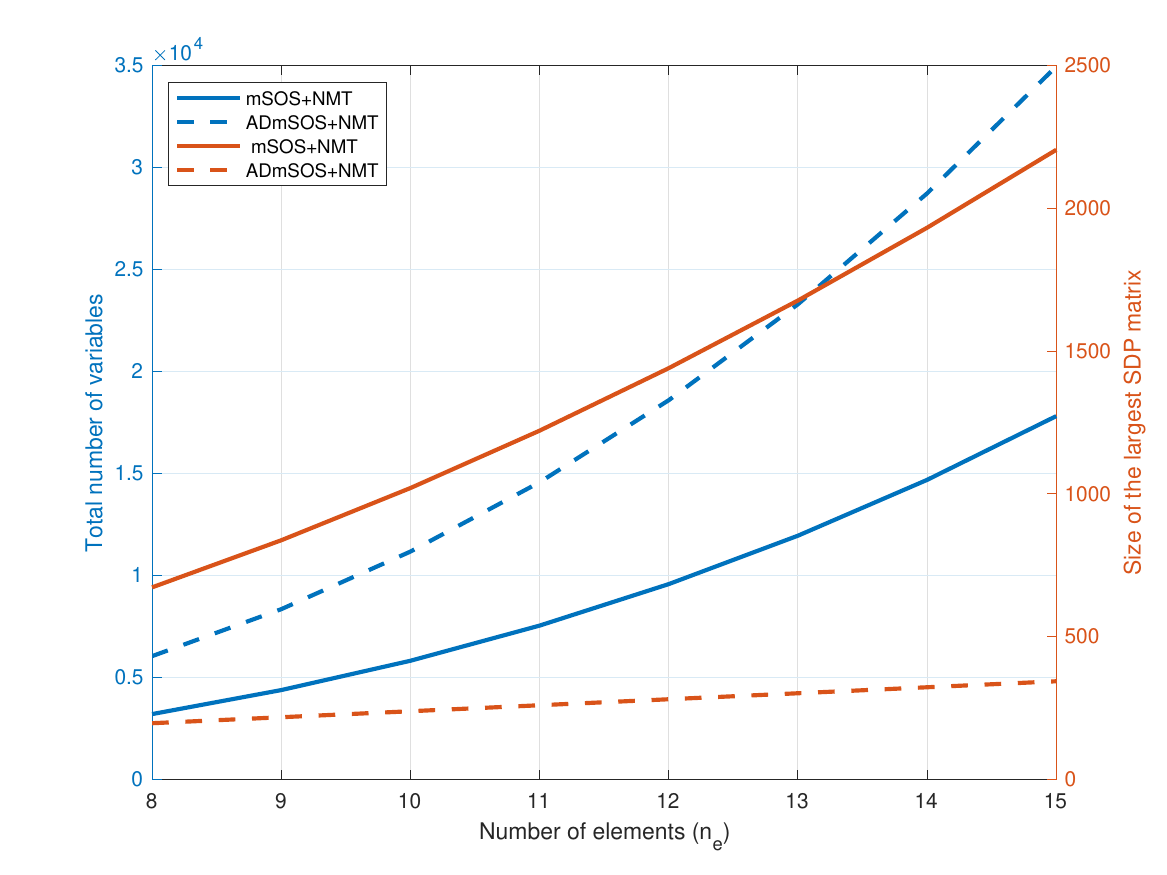}%
   
  }
  \caption{ Comparison of mSOS+NMT and ADmSOS+NMT for the fourth-order relaxation of a double-hinged beam problem  }
  \label{fig:NMT-rel4}
\end{figure}

The numerical results in Figures \ref{fig:mSOS-rel1}--\ref{fig:NMT-rel4}, together with detailed data in Tables \ref{tab:sw1}--\ref{tab:sw4} in Appendix \ref{appendix-tables}, demonstrate the computational advantages offered by arrow decomposition across different problem sizes and relaxation orders. For the first-order relaxation ($r=1$), all methods perform comparably on small problems with $n_\mathrm{e} \leq 9$, requiring less than $0.1$ seconds of computation time. However, even at this lowest relaxation order, both AD variants (mSOS+AD and mSOS+NMT+AD) maintain consistent performance as $n_\mathrm{e}$ increases to $15$ (the running time is less than $0.1$ second), while the standard mSOS approach exhibits occasional performance spikes due to a higher number of iterations (up to $0.6$ seconds running time).

The benefits of arrow decomposition become particularly evident at higher relaxation orders. For the second-order relaxation ($r=2$), the standard mSOS approach requires $397$ seconds for $n_\mathrm{e} = 15$, while mSOS+NMT+AD solves the same problem in just $33$ seconds. Furthermore, both non-AD variants (mSOS and mSOS+NMT) exhibit numerical stability issues for larger problems, as indicated by the $^+$ superscript in Figures \ref{fig:mSOS-rel3} and \ref{fig:NMT-rel3}, and Table~\ref{tab:sw2}. These issues manifest as solver convergence problems in Mosek. In contrast, arrow-decomposition-based methods maintain numerical stability and provide reliable results across all problem sizes, offering up to $12\times$ speedup for $n_\mathrm{e} = 15$.

The computational advantages of arrow decomposition are most pronounced in the third-order relaxation ($r=3$). While the AD combined with the standard mSOS approach becomes computationally intractable beyond $n_\mathrm{e} = 10$ requiring over $2.5$ hours for $n_\mathrm{e} = 8$  (Figure \ref{fig:mSOS-rel3}), the AD combined to mSOS with nonmixed-term basis (mSOS+NMT+AD) solves the same problem in just $21$ seconds, offering a $122\times$ speedup (Figure \ref{fig:NMT-rel3}). The NMT-based approaches continue to solve problems efficiently up to $n_\mathrm{e} = 15$.

However, arrow decomposition alone can be counterproductive at higher relaxation orders. This is evident for $n_\mathrm{e} = 9$ where mSOS+AD requires $9275$ seconds compared to $3816$ seconds for standard mSOS. Such degradation occurs because the number of variables for mSOS+AD is $25695$, whereas for mSOS we have significantly less, $8007$.

At the fourth relaxation order ($r=4$), only the NMT-based approaches remain practically viable for problems with $n_\mathrm{e} \ge 8$. For instance, solving $n_\mathrm{e} = 8$ requires $55.8$ hours with standard mSOS but only $36$ seconds with mSOS+NMT+AD, representing a $5500\times$ speedup. 

 For $r=3$, global optimality for small problems ($n_\mathrm{e} \leq 3$) is verified, while AD-based approaches maintain global optimality certificates up to $n_\mathrm{e} \leq 9$. At $r=4$, AD-based methods achieve tight optimality gaps ($\varepsilon_\mathrm{r} \le 10^{-5}$) up to $n_\mathrm{e} = 10$, while still providing reliable bounds ($\varepsilon_\mathrm{r} \le 10^{-3}$) for larger problems. This improved numerical stability is evidenced by the absence of solver convergence issues (marked by $^+$) in AD-based approaches.

\subsection{$24$-element modular frame}

Having demonstrated the benefits of arrow decomposition on a simple structural component with varying discretization, we now turn to a more realistic engineering application of weight minimization of a modular frame structure, based on an example originally introduced in \cite[Section 4.1]{tyburec2022global}. The structure, shown in Fig.~\ref{fig:frame24}, features $14$ nodes and $24$ elements and is subjected to combined horizontal wind loads (magnitudes $1$, $1$, and $0.5$) and vertical unit loads at beam midSpans. For simplicity, we use normalized material properties (Young's modulus $E = 1$, density $\rho = 1$) and fixed base supports.

To create a practical design that accounts for bidirectional wind loading, we impose symmetry conditions on the structure. This symmetry, combined with structural requirements, leads to several groups of elements that must share the same cross-sectional areas: The columns satisfy $x_1 = x_4$, $x_2 = x_5$, and $x_3 = x_6$, while the horizontal beams must maintain $x_7 = x_8$, $x_9 = x_{10}$, and $x_{11} = x_{12}$. For diagonals within each story, we enforce $x_{13} = x_{14} = x_{15} = x_{16}$, $x_{17} = x_{18} = x_{19} = x_{20}$, and $x_{21} = x_{22} = x_{23} = x_{24}$. These dependencies reduce the number of independent design variables to $9$.

As shown in Fig.~\ref{fig:frame24}b, we employ different cross-sectional shapes optimized for each element's primary function: H-sections for columns to resist axial loads and bending, I-sections for horizontal beams to maximize bending resistance, and thin-walled circular sections for diagonal braces primarily carrying axial forces. This choice of cross-sections results in a quadratic polynomial stiffness matrix $\bm{K}(\bm{x})$ in the cross-sectional areas $\bm{x}$.

For the arrow decomposition, we adopt a natural storey-wise partition of elements that reflects the physical structure: The first subdomain $\mathcal{D}_1 = \{1,4,7,8,13,14,15,16\}$ contains all elements of the bottom story, $\mathcal{D}_2 = \{2,5,9,10,17,18,19,20\}$ encompasses the middle story elements, and $\mathcal{D}_3 = \{3,6,11,12,21,22,23,24\}$ includes all elements of the top story. We set the compliance bound to $\overline{c} = 5,000$.

\begin{figure}[!htbp]
\begin{subfigure}{0.425\linewidth}
    \begin{tikzpicture}
		\scaling{2.2}
		\point{a}{0.00}{0.0};
		\point{b}{1.50}{0.0}; 
		\point{c}{0.00}{1.0}; 
		\point{d}{1.50}{1.0}; 
		\point{e}{0.00}{2.0}; 
		\point{f}{1.50}{2.0}; 
		\point{g}{0.00}{3.0}; 
		\point{h}{1.50}{3.0}; 
		\point{i}{0.75}{1.0};
		\point{j}{0.75}{2.0}; 
		\point{k}{0.75}{3.0}; 
		\point{l}{0.75}{0.5}; 
		\point{m}{0.75}{1.5}; 
		\point{n}{0.75}{2.5}; 
		\point{o}{0.75}{0.0};
		
		\beam{2}{a}{c}; \notation{4}{a}{c}[$1$];
		\beam{2}{b}{d}; \notation{4}{b}{d}[$4$];
		\beam{2}{c}{e}; \notation{4}{c}{e}[$2$];
		\beam{2}{d}{f}; \notation{4}{d}{f}[$5$];
		\beam{2}{e}{g}; \notation{4}{e}{g}[$3$];
		\beam{2}{f}{h}; \notation{4}{f}{h}[$6$];
		\beam{2}{c}{i}; \notation{4}{c}{i}[$7$];
		\beam{2}{i}{d}; \notation{4}{i}{d}[$8$];
		\beam{2}{e}{j}; \notation{4}{e}{j}[$9$];
		\beam{2}{j}{f}; \notation{4}{j}{f}[$10$];
		\beam{2}{g}{k}; \notation{4}{g}{k}[$11$];
		\beam{2}{k}{h}; \notation{4}{k}{h}[$12$];
		\beam{2}{a}{l}; \notation{4}{a}{l}[$13$];
		\beam{2}{l}{d}; \notation{4}{l}{d}[$14$];
		\beam{2}{b}{l}; \notation{4}{b}{l}[$15$];
		\beam{2}{l}{c}; \notation{4}{l}{c}[$16$];
		\beam{2}{c}{m}; \notation{4}{c}{m}[$17$];
		\beam{2}{m}{f}; \notation{4}{m}{f}[$18$];
		\beam{2}{d}{m}; \notation{4}{d}{m}[$19$];
		\beam{2}{m}{e}; \notation{4}{m}{e}[$20$];
		\beam{2}{e}{n}; \notation{4}{e}{n}[$21$];
		\beam{2}{n}{h}; \notation{4}{n}{h}[$22$];
		\beam{2}{f}{n}; \notation{4}{f}{n}[$23$];
		\beam{2}{n}{g}; \notation{4}{n}{g}[$24$];

        \support{3}{a};
		\support{3}{b};
		
		\load{1}{i}[90][0.7][0.0]; \notation{1}{i}{$1$}[above=3mm,xshift=1.5mm];
		\load{1}{j}[90][0.7][0.0]; \notation{1}{j}{$1$}[above=3mm,xshift=1.5mm];
		\load{1}{k}[90][0.7][0.0]; \notation{1}{k}{$1$}[above=3mm,xshift=1.5mm];
		\load{1}{c}[180][0.7][0.0]; \notation{1}{c}{$1$}[below=0mm, xshift=-3mm];
		\load{1}{e}[180][0.7][0.0]; \notation{1}{e}{$1$}[below=0mm, xshift=-3mm];
		\load{1}{g}[180][0.35][0.0]; \notation{1}{g}{$0.5$}[below=0mm, xshift=-3mm];
		
		\dimensioning{2}{o}{l}{4.0}[$0.5$];
		\dimensioning{2}{l}{i}{4.0}[$0.5$];
		\dimensioning{2}{i}{m}{4.0}[$0.5$];
		\dimensioning{2}{m}{j}{4.0}[$0.5$];
		\dimensioning{2}{j}{n}{4.0}[$0.5$];
		\dimensioning{2}{n}{k}{4.0}[$0.5$];
		\dimensioning{1}{a}{o}{-0.75}[$0.75$];
		\dimensioning{1}{o}{b}{-0.75}[$0.75$];
	\end{tikzpicture}
	\caption{}
\end{subfigure}%
\hfill\begin{subfigure}{0.2\linewidth}
    \centering
	\squared{1}--\squared{6}:\\
	\begin{tikzpicture}
		\scaling{1.0}
		\point{a}{0.0}{0.0};\point{a0}{1.0}{0.0};
		\point{b}{0.0}{0.1};\point{b0}{1.0}{0.1};
		\point{c}{0.45}{0.1};\point{c0}{0.55}{0.1};
		\point{d}{0.45}{0.9};\point{d0}{0.55}{0.9};
		\point{e}{0.0}{0.9};\point{e0}{1.0}{0.9};
		\point{f}{0.0}{1.0};\point{f0}{1.0}{1.0};
		\draw[black, thick, fill=black!25] (a0) -- (b0) -- (c0) -- (d0) -- (e0) -- (f0) -- (f) -- (e) -- (d) -- (c) -- (b) -- (a) -- cycle;
		\dimensioning{2}{a}{f}{1.25}[$10t_e$];
		\dimensioning{1}{a}{a0}{1.25}[$10t_e$];
		\draw [-stealth](0.25,0.60) -- (0.5,0.75);
		\draw [-stealth](0.25,0.60) -- (0.35,0.95);
		\draw [-stealth](0.25,0.60) -- (0.35,0.05);
		\draw (-0.2,0.6) -- node[above=-0.5mm]{$t_e$}(0.025,0.6) -- (0.25,0.60);
	\end{tikzpicture}\\
        \vspace{3mm}
	\squared{7}--\squared{12}:\\
	\begin{tikzpicture}
		\scaling{1.0}
		\point{a}{0.0}{0.0};\point{a0}{1.0}{0.0};
		\point{b}{0.0}{0.1};\point{b0}{1.0}{0.1};
		\point{c}{0.45}{0.1};\point{c0}{0.55}{0.1};
		\point{d}{0.45}{1.9};\point{d0}{0.55}{1.9};
		\point{e}{0.0}{1.9};\point{e0}{1.0}{1.9};
		\point{f}{0.0}{2.0};\point{f0}{1.0}{2.0};
		\draw[black, thick, fill=black!25] (a0) -- (b0) -- (c0) -- (d0) -- (e0) -- (f0) -- (f) -- (e) -- (d) -- (c) -- (b) -- (a) -- cycle;
		\dimensioning{2}{a}{f}{1.25}[$20t_e$];
		\dimensioning{1}{a}{a0}{2.25}[$10t_e$];
		\draw [-stealth](0.25,0.60) -- (0.5,0.75);
		\draw [-stealth](0.25,0.60) -- (0.35,1.95);
		\draw [-stealth](0.25,0.60) -- (0.35,0.05);
		\draw (-0.2,0.6) -- node[above=-0.5mm]{$t_e$}(0.025,0.6) -- (0.25,0.60);
	\end{tikzpicture}\\
	\vspace{3mm}
	\squared{13}--\squared{24}:\\
	\begin{tikzpicture}
		\point{a}{-1.0}{0};\point{a0}{0.0}{0.0};
		\draw[thick, fill=black!25] (0.0,0) arc (0:360:0.5);
		\draw[thick, fill=white] (-0.1,0) arc (0:360:0.4);
		\dimensioning{1}{a}{a0}{0.65}[$10t_e$];
		\draw [-stealth](-0.3,-0.1) -- (-0.05,0.05);
		\draw (-0.7,-0.1) -- node[above=-0.5mm]{$t_e$}(-0.4,-0.1) -- (-0.3,-0.1);
	\end{tikzpicture}
	\caption{}
\end{subfigure}%
\hfill\begin{subfigure}{0.35\linewidth}
        \includegraphics[width=0.8\linewidth]{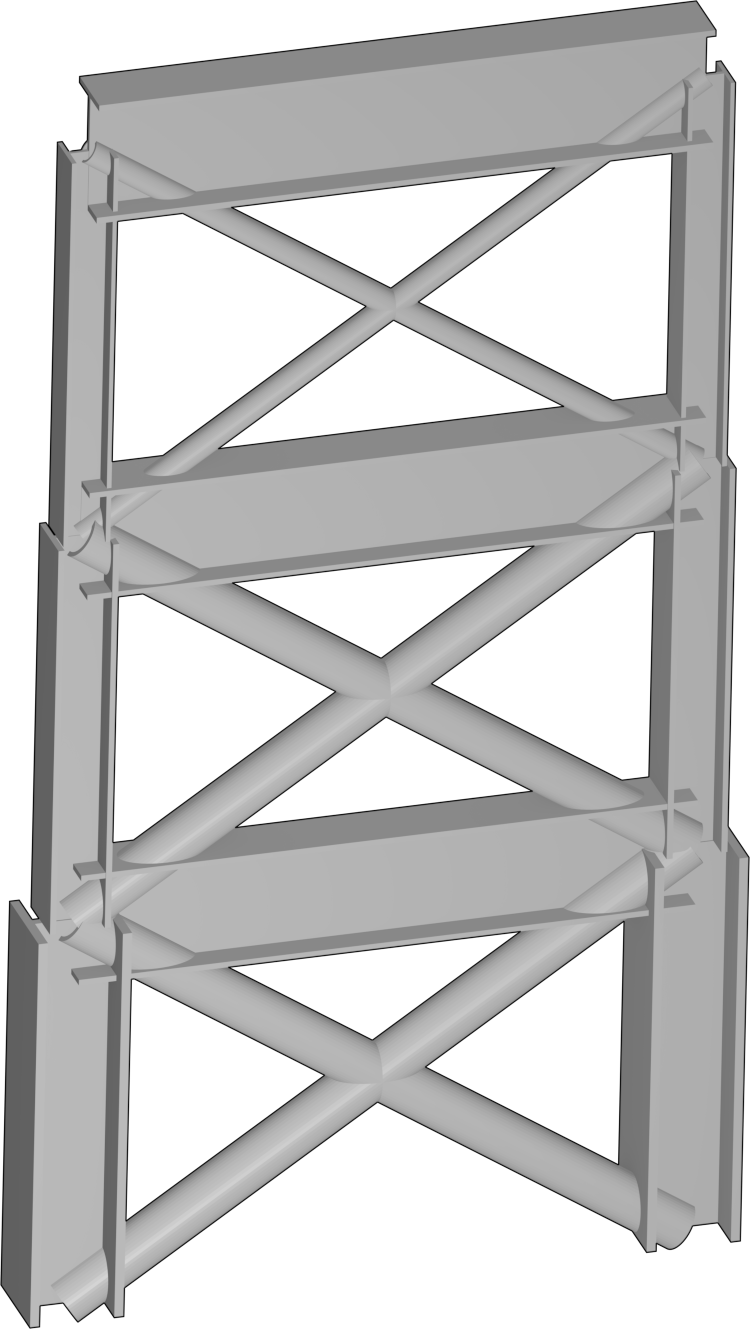}
        \vspace{3.5mm}
        \caption{}
\end{subfigure}
    \caption{$24$-elements modular frame: (a) discretization and boundary conditions, (b) cross-section parametrizations, and (c) optimal design.}
    \label{fig:frame24}
\end{figure}

The results in Table \ref{tab:frame24} compare solution times across different relaxation orders $r$ and the four solution approaches. All methods achieve consistent lower bounds, but their computational efficiency varies significantly. At the first relaxation order ($r=1$), all approaches perform similarly, requiring only $0.04-0.07$ seconds. However, the benefits of decomposition become apparent at higher orders. For $r=2$, both standard mSOS and NMT mSOS combined with AD reduce solution time from $21$ seconds to under $9$ seconds. The most dramatic differences appear at $r=3$, where standard mSOS requires nearly $28$ minutes while mSOS+NMT+AD completes in just $32$ seconds--a $53\times$ speedup.

Interestingly, for $r=3$, mSOS+AD requires more time than standard mSOS due to additional variables introduced by the decomposition. However, AD consistently provides better numerical stability and tighter optimality guarantees, as evidenced by the relative optimality gaps $\varepsilon_\mathrm{r}$.

The hierarchy converges at $r=3$, where we can verify global optimality through different means: the rank flatness condition for mSOS and mSOS+AD, and the relative optimality gap $\varepsilon_\mathrm{r}$ for NMT-based approaches. The small optimality gaps ($\varepsilon_\mathrm{r} \le 1.8\times10^{-6}$) for NMT methods with AD confirm the advantages of combining these techniques. Fig. \ref{fig:frame24}c then shows the resulting optimal design.

\begin{table}[htbp]
    \centering
    \scriptsize
    \setlength{\tabcolsep}{1pt}
    \renewcommand{\arraystretch}{1.0}
    \resizebox{\textwidth}{!}{%
    \begin{tabular}{c|c|c|c|c|c}
     \toprule
         $r$ & & mSOS & mSOS+AD & mSOS+NMT & mSOS+NMT+AD\\
        \hline
         $1$ & $t$ [s]& $0.07$ & $0.04$ & $0.07$ & $0.04$\\
             & l.b. & $5.15\times10^{-2}$ & $5.15\times10^{-2}$ & $5.15\times10^{-2}$ & $5.15\times10^{-2}$\\
             & $\varepsilon_\mathrm{r}$ & $1.3$ & $1.3$ & $1.3$ & $1.3$\\
             & $n$ & $54$ & $62$ & $54$ & $62$ \\
             & size & $10\times1, 1\times10, 1\times37$ & $10\times1, 1\times10, 1\times13, 2\times16$ & $10\times1, 1\times10, 1\times37$ & $10\times1, 1\times10, 1\times13, 2\times16$\\
        \hline
         $2$ & $t$ [s] & $21.82$ & $8.83$ & $20.18$ & $8.40$\\
             & l.b. & $1.09\times10^{-2}$ & $1.09\times10^{-2}$ & $1.09\times10^{-2}$ & $1.09\times10^{-2}$\\
             & $\varepsilon_\mathrm{r}$ & $8.2\times10^{-2}$ & $8.2\times10^{-2}$ & $8.3\times10^{-2}$ & $8.3\times10^{-2}$\\
             & $n$ & $714$ & $1424$ & $588$ & $1298$\\
             & size & $10\times10, 1\times55, 1\times370$ & $10\times10, 1\times55, 1\times130, 2\times160$ & $10\times10, 1\times19, 1\times370$ & $10\times10, 1\times19, 1\times130, 2\times160$\\
         \hline
         $3$ & $t$ [s] & $1679.34$ & $1913.92$ & $46.27$ & $31.69$\\
             & l.b. & $1.18\times10^{-2}$ & $1.18\times10^{-2}$ & $1.18\times10^{-2}$ & $1.18\times10^{-2}$\\
             & $\varepsilon_\mathrm{r}$ & $1.5\times10^{-3}$ & $3.3\times10^{-4}$ & $1.8\times10^{-6}$ & $3.0\times10^{-7}$\\
             & $n$ & $5004$ & $26234$ & $1194$ & $3740$\\
             & size & $10\times55, 1\times220, 1\times2035$ & $10\times55, 1\times220, 1\times715, 2\times880$ & $10\times19, 1\times28, 1\times703$ & $10\times19, 1\times28, 1\times247, 2\times304$\\
             \bottomrule  
    \end{tabular}}
    \caption{Comparison of four moment-SOS hierarchies for a $24$-element frame optimization problem at different relaxation orders $r$. The hierarchies are: standard moment-SOS (mSOS), mSOS with arrow decomposition (mSOS+AD), mSOS using nonmixed-term basis (mSOS+NMT), and mSOS combining nonmixed-term basis with arrow decomposition (mSOS+NMT+AD). For each method and relaxation order, we report: solution time ($t$) in seconds, computed lower bound (l.b.), relative global optimality gap ($\varepsilon_\mathrm{r}$), number of variables in the relaxation ($n$), and sizes of matrix constraints (where $a\times b$ indicates $a$ matrix inequalities of the size $\mathbb{S}^b$).}
    \label{tab:frame24}
\end{table}

\section{Conclusion and perspectives}
\label{sec:conclusion}

This paper advanced the Arrow Decomposition (AD) method beyond the framework established in \cite{kovcvara2021decomposition} by developing weaker applicability conditions and extending its theoretical foundations to handle polynomial matrix inequalities. Our main contribution was showing how AD could be effectively combined with moment-SOS relaxations in ways that preserve computational advantages while ensuring theoretical convergence.

To these goals, we established several theoretical improvements to the AD framework. First, we weakened the conditions for AD applicability to handle problems where the LMI/PMI need only be positive semidefinite instead of positive definite. We then showed that AD potentially produces degenerate problems without interior points and addressed this issue by developing a projection-based preprocessing method. This not only resolved the Slater condition violation but also reduced computational complexity by decreasing both the number of additional variables and matrix inequality sizes.

In extending AD to PMIs, we provided two approaches: direct application to POP problems (``prior to mSOS'') and application at the relaxation level (``posterior to mSOS''). Notably, we proved that the posterior approach can achieve the benefits of prior application without the computational overhead of additional variables in the monomial basis.

Our numerical experiments focused on compliance and weight minimization problems in structural engineering. These examples provided physical interpretations for the additional variables introduced by AD. The results showed that mSOS+AD and its variants significantly improve both solution times and numerical accuracy, particularly for problems combining high dimensionality with low relaxation orders.

This work opens several promising directions for future research. One possibility is to extend the results of Section \ref{computation-consideration-msos} to cases where the range and null space of the matrices $\A_k(\x)$ may depend on $\x$. Another key area is the development of automated methods for generating optimal decompositions that balance matrix sizes against additional variable counts. Further opportunities lie in extending applications to other optimization problems with arrow-type LMI/PMIs, particularly in continuum topology optimization across various physical domains. The integration of AD with the correlative sparsity method presents another avenue for investigation, as does the study of potential combinations with the finest block diagonal decomposition technique presented in \cite{Murota2010}. These extensions could further enhance the practical applicability and computational efficiency of the AD method.

\appendix

\section{Appendix}
\label{appendix}

\subsection{Technical lemmas}
\label{appendix-lemma}
Here, we present few technical lemmas that are focused more on the sparsity in the arrow decomposition technique presented in this paper. In order to do so, let $\bm{G}$ be a matrix with an arrow shape (recall Section \ref{sec: ADLMI}) and let $ \mathcal{I} $ be the index set of the matrix $\bm{A}$, and $(\mathcal{I}_k)_{k \in [p]}$ its partition.

The first result shows the existence and the construction of a matrix $\bm{\Pi}_k$ for each $k \in [p]$, such that the equality $ \bm{D}_k=\sum_{\substack{k:k < \ell \\ \mathcal{I}_{k,\ell}\neq \varnothing}} (\bm{D}_{k,\ell})_{ij}-\sum_{\substack{\ell: \ell <k \\ \mathcal{I}_{\ell, k}\neq \varnothing}}(\bm{D}_{\ell,k})_{ij},$ can be written in the compact form $\bm{D}_k=\bm{\Pi}_k\bm{D}$.
\begin{lemma}
Let $ n_{\mathcal{I}}=\sum_{k=1}^p \sum_{\ell >k} 
        \vert \mathcal{I}_{k,\ell}\vert$ and $\bm{D} \in \mathbb{R}^{n_{\mathcal{I} }  \times m}$ be a matrix containing all the variables involved in each matrix $\bm{D}_{k,\ell}$, i.e.,
$\bm{D}=\lbrace (\bm{D}_{k,\ell})_{ij} \neq 0: k<\ell, (i,j) \in \mathcal{I}_{k,\ell} \times [m] \rbrace $. Then there exists a matrix $\bm{\Pi}_k \in \mathbb{R}^{ \vert \mathcal{I}_k \vert  \times n_{\mathcal{I} } }$ such that Equation \eqref{additional-variables-dependencies} can be written in the compact form 
\begin{equation}\label{definition-Sigma}
    \bm{D}_k=\bm{\Pi}_k\bm{D}.
\end{equation}  
 \label{compacte-form-add-variables}
\end{lemma}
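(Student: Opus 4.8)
The plan is to construct $\bm{\Pi}_k$ explicitly as a signed block-selection matrix and to verify \eqref{additional-variables-dependencies} by a direct blockwise computation. First I would fix, once and for all, an enumeration of the set of ordered pairs $\Theta := \lbrace (k,\ell) : 1 \le k < \ell \le p,\ \mathcal{I}_{k,\ell} \neq \varnothing \rbrace$. For each such pair, $\bm{D}_{k,\ell}$ has nonzero rows only in $\mathcal{I}_{k,\ell}$, so it carries exactly $\vert \mathcal{I}_{k,\ell}\vert$ informative rows; denote by $\bm{D}_{k,\ell}^{\mathrm{red}} := \bm{\mathscr{E}}_{\mathcal{I}_{k,\ell}} \bm{D}_{k,\ell} \in \mathbb{R}^{\vert \mathcal{I}_{k,\ell}\vert \times m}$ the corresponding reduced block. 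Stacking the $\bm{D}_{k,\ell}^{\mathrm{red}}$ in the chosen order then produces $\bm{D} \in \mathbb{R}^{n_{\mathcal{I}} \times m}$, and by construction its row dimension is precisely $n_{\mathcal{I}} = \sum_{k=1}^p \sum_{\ell>k} \vert \mathcal{I}_{k,\ell}\vert$.

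The key observation is that, since $\mathcal{I}_{k,\ell} = \mathcal{I}_k \cap \mathcal{I}_\ell \subseteq \mathcal{I}_k$, the matrix $\bm{\mathscr{E}}_{\mathcal{I}_k}\bm{\mathscr{E}}_{\mathcal{I}_{k,\ell}}^T \in \mathbb{R}^{\vert \mathcal{I}_k\vert \times \vert \mathcal{I}_{k,\ell}\vert}$ is the natural injection placing the rows indexed by $\mathcal{I}_{k,\ell}$ into the corresponding positions among those indexed by $\mathcal{I}_k$; indeed it has exactly one $1$ per column and vanishes on the rows of $\mathcal{I}_k \setminus \mathcal{I}_{k,\ell}$. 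I would then define $\bm{\Pi}_k$ block-column by block-column following the enumeration of $\Theta$: on the block associated with a pair $(k,\ell)$ having $\ell > k$ set $+\bm{\mathscr{E}}_{\mathcal{I}_k}\bm{\mathscr{E}}_{\mathcal{I}_{k,\ell}}^T$; on the block associated with a pair $(\ell,k)$ having $\ell < k$ set $-\bm{\mathscr{E}}_{\mathcal{I}_k}\bm{\mathscr{E}}_{\mathcal{I}_{\ell,k}}^T$; and set every remaining block to zero. This yields $\bm{\Pi}_k \in \mathbb{R}^{\vert \mathcal{I}_k\vert \times n_{\mathcal{I}}}$, exactly the dimension required.

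Finally I would compute $\bm{\Pi}_k \bm{D}$ blockwise. By the block structure just imposed, only the blocks indexed by pairs $(k,\ell)$ with $\ell > k$ and $(\ell,k)$ with $\ell < k$ survive, and each contributes $\pm \bm{\mathscr{E}}_{\mathcal{I}_k}\bm{\mathscr{E}}_{\mathcal{I}_{k,\ell}}^T \bm{D}_{k,\ell}^{\mathrm{red}}$. Using that $\bm{D}_{k,\ell}$ already vanishes outside the rows $\mathcal{I}_{k,\ell}$, one has $\bm{\mathscr{E}}_{\mathcal{I}_{k,\ell}}^T \bm{D}_{k,\ell}^{\mathrm{red}} = \bm{D}_{k,\ell}$, so each surviving term equals $\pm \bm{\mathscr{E}}_{\mathcal{I}_k}\bm{D}_{k,\ell}$, i.e.\ the reduced block reinjected into the $\mathcal{I}_k$-rows with the correct sign. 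Summing these contributions reproduces the right-hand side of \eqref{additional-variables-dependencies} restricted to the rows $\mathcal{I}_k$, which is $\bm{\mathscr{E}}_{\mathcal{I}_k}\bm{D}_k$, the reduced form of $\bm{D}_k$; hence $\bm{D}_k = \bm{\Pi}_k \bm{D}$ in the sense of the statement.

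I expect the only genuine obstacle to be indexing bookkeeping rather than any mathematical subtlety: one must keep the column-blocks of $\bm{\Pi}_k$ aligned with the global enumeration used to stack $\bm{D}$, and must confirm that both relevant embeddings — of $\mathcal{I}_{k,\ell}$ for $\ell>k$ and of $\mathcal{I}_{\ell,k}$ for $\ell<k$ — land inside $\mathcal{I}_k$, which holds precisely because each such intersection is a subset of $\mathcal{I}_k$. Once the enumeration is fixed, the verification reduces to the direct block computation above.
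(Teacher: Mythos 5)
Your proof is correct and follows essentially the same route as the paper's: you construct $\bm{\Pi}_k$ explicitly as a signed block-selection matrix aligned with a fixed enumeration of the nonempty intersections, and verify \eqref{additional-variables-dependencies} by a direct blockwise computation. The only difference is notational — you express the blocks compactly as $\pm\bm{\mathscr{E}}_{\mathcal{I}_k}\bm{\mathscr{E}}_{\mathcal{I}_{k,\ell}}^T$ where the paper defines the entries of $\bm{\Pi}_k$ via the position sets $\mathcal{P}_{k,\ell}$ and $\mathcal{S}_{k,\ell}$, which amounts to the same matrix.
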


\begin{proof}
 Let $\overline{\bm{D}}_{k,\ell}=\lbrace (\bm{D}_{k,\ell})_{ij} \neq 0 : (i,j) \in \mathcal{I}_{k,\ell} \times [m] \rbrace $. By choosing the order $<$ defined by $(a,b) < (k,\ell) \Leftrightarrow a \leq k \text{ and } b < \ell$, we can write the matrix $\bm{D}$ as a block-wise concatenation of the matrices $\overline{\bm{D}}_{k,\ell}$, i.e., $\bm{D}=\begin{bmatrix}
        \dots & \overline{\bm{D}}_{k,\ell} &\dots 
        \end{bmatrix}^T$. Now, we define the matrix $\bm{\Pi}_k \in \mathbb{R}^{ \vert \mathcal{I}_k \vert  \times n_{\mathcal{I} } }$ by 
\begin{equation}
   (\bm{\Pi}_k)_{ij}=\left\lbrace\begin{aligned}
  & 1 \text{ if } \exists \ell >k: \mathcal{I}_{k,\ell} \neq \varnothing, i \in \mathcal{P}_{k,\ell} \text{ and } j \in \mathcal{S}_{k,\ell}  ,\\
  -& 1 \text{ if } \exists \ell <k: \mathcal{I}_{\ell,k} \neq \varnothing, i \in \mathcal{P}_{k,\ell} \text{ and } j \in  \mathcal{S}_{\ell,k}  ,\\
  & 0 \text{ otherwise },
\end{aligned}\right.
\label{matrix-Pi}
\end{equation}
where the set $\mathcal{S}_{k,\ell} $ enumerates the row positions of the block $\overline{\bm{D}}_{k,\ell}$ in $\bm{D}$, and is defined by $\mathcal{S}_{k,\ell}= \lbrace s_{k,\ell}+1,\cdots,s_{k,\ell}+ \vert \mathcal{\overline{I}}_{k,\ell}\vert \rbrace$ with $\mathcal{\overline{I}}_{k,\ell}=\lbrace i \in \mathcal{I}_{k,\ell}:(\bm{D}_{k,\ell})_{i,j} \neq 0\rbrace$ and $s_{k,\ell}=\sum_{(a,b) < (k,\ell)}\vert \overline{\mathcal{I}}_{k,\ell}\vert $; and the set $\mathcal{P}_{k,\ell}$ enumerates the positions of  the elements of $\mathcal{I}_{k,l}$ in $\mathcal{I}_{k}$, i.e. $\mathcal{P}_{k,\ell}=\left\lbrace e \in [\vert \mathcal{I}_k\vert]: \exists j \in \mathcal{I}_{k,\ell } \text{ s.t. } j=j_e\right\rbrace$.
Notice that, for $s \in \mathcal{S}_{k,\ell}$ and $i \in [\vert \mathcal{S}_{k,\ell}\vert]$,  we have $(\bm{D})_{sj}=(\overline{\bm{D}}_{k,\ell})_{i j}$  for all $j \in [m]$.

Now, for all $k \in [p]$ and $j \in [m]$, we have 
\begin{align*}
    (\bm{\Pi}_k\bm{D})_{ij}&= \sum_{s=1}^{n_{\mathcal{I}}}(\bm{\Pi}_k)_{is}(\bm{D})_{sj} \\
    &= \sum_{\substack{k:k < \ell \\ \mathcal{I}_{k,\ell}\neq \varnothing}} \sum_{\substack{s \in \mathcal{S}_{k,\ell} \\ i \in \mathcal{P}_{k,\ell}}}(\bm{D})_{sj}-\sum_{\substack{\ell: \ell <k \\ \mathcal{I}_{\ell, k}\neq \varnothing}} \sum_{\substack{s \in \mathcal{S}_{\ell,k} \\ i \in \mathcal{P}_{k,\ell}} }(\bm{D})_{sj}\\
     &=\sum_{\substack{k:k < \ell \\ \overline{\mathcal{I}}_{k,\ell}\neq \varnothing}} (\overline{\bm{D}}_{k,\ell})_{ij}-\sum_{\substack{\ell: \ell <k \\ \overline{\mathcal{I}}_{\ell, k}\neq \varnothing}}(\overline{\bm{D}}_{\ell,k})_{ij},\\
    &=    \sum_{\substack{k:k < \ell \\ \mathcal{I}_{k,\ell}\neq \varnothing}} (\bm{D}_{k,\ell})_{ij} -\sum_{\substack{ \ell:\ell<k \\ \mathcal{I}_{\ell,k}\neq \varnothing}} (\bm{D}_{\ell,k})_{ij}, \text{ with } (\bm{D}_{k,\ell})_{ij}=\varnothing \text{ whenever } i \notin \mathcal{P}_{k,\ell},\\
    &=(\bm{D}_k)_{ij}.
\end{align*}
\end{proof}

Next, we present a link between the matrices $\bm{\Pi}_k$ defined by \eqref{compacte-form-add-variables} for the sets $\widehat{\mathcal{I}}_k $, and $\widehat{\bm{\Pi}}_k$ defined by \eqref{compacte-form-add-variables} for the sets $\widehat{\mathcal{I}}_k $.

        \begin{lemma}
 Let $d \in \mathbb{N}^*$ and $ n_{\widehat{\mathcal{I}}}=L_d n_{\mathcal{I}}$, where $n_\mathcal{I}= \sum_{k=1}^p \sum_{\ell>k} \lvert \mathcal{I}_{\ell,k} \rvert $. For $k \in [p]$, consider the matrix $\widehat{\bm{\Pi}}_k \in \mathbb{R}^{\vert \mathcal{\widehat{I}}_k \vert \times n_{\mathcal{\widehat{I}}}}$ defined by \eqref{compacte-form-add-variables} for the set $\mathcal{\widehat{I}}_k$. 
 Then, we have 
  $\widehat{\bm{\Pi}}_k =\bm{\Pi}_k \otimes \bm{I}$, where the matrix $\bm{\Pi}_k$ is associated to $\mathcal{I}_k$ as in  \eqref{definition-Sigma}, and $ \bm{I} \in \mathbb{S}^{L_{d}} $.
   \label{lemma-correspondance-sigmas}
\end{lemma}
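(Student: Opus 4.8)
The plan is to prove the identity entrywise, by tracking how the combinatorial data $(\mathcal{I}_{k,\ell}, \mathcal{P}_{k,\ell}, \mathcal{S}_{k,\ell})$ underlying the construction \eqref{matrix-Pi} of $\bm{\Pi}_k$ transforms under the inflation $i_k \mapsto (i_k-1)L_d + i_b$ that defines $\widehat{\mathcal{I}}_k$ in \eqref{definition-index-set-AD-posterior}. Recall that the Kronecker product satisfies $(\bm{\Pi}_k \otimes \bm{I}_{L_d})_{uv} = (\bm{\Pi}_k)_{ij}(\bm{I}_{L_d})_{rs}$ whenever $u = (i-1)L_d + r$ and $v = (j-1)L_d + s$; hence it suffices to show that $(\widehat{\bm{\Pi}}_k)_{uv}$, written in these block coordinates, equals $(\bm{\Pi}_k)_{ij}$ when $r = s$ and vanishes otherwise.

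First I would record three structural facts about the inflation. (i) Since $i_k \mapsto (i_k-1)L_d + i_b$ is value-increasing and sends the disjoint length-$L_d$ blocks to disjoint ordered blocks, it is order-preserving; consequently, if $i_k$ occupies position $e$ in the ordered set $\mathcal{I}_k$, then its $L_d$ images occupy positions $(e-1)L_d + 1, \dots, eL_d$ of $\widehat{\mathcal{I}}_k$, with offset $i_b$ landing in position $(e-1)L_d + i_b$. (ii) The inflated intersection is the inflation of the intersection, $\widehat{\mathcal{I}}_{k,\ell} = \{(i-1)L_d + i_b : i \in \mathcal{I}_{k,\ell},\ i_b \in [L_d]\}$, because $(i-1)L_d + i_b$ lies in both $\widehat{\mathcal{I}}_k$ and $\widehat{\mathcal{I}}_\ell$ iff $i$ lies in both $\mathcal{I}_k$ and $\mathcal{I}_\ell$; in particular $\widehat{\mathcal{I}}_{k,\ell} \neq \varnothing \Leftrightarrow \mathcal{I}_{k,\ell} \neq \varnothing$. (iii) Because the concatenation defining $\widehat{\bm{D}}$ uses the same pair-ordering $(a,b) < (k,\ell)$ as for $\bm{D}$, and each block inflates to $L_d$ times its height, the row-position set dilates identically: $f \in \mathcal{S}_{k,\ell}$ gives rows $(f-1)L_d + 1, \dots, fL_d$ of $\widehat{\bm{D}}$.

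Next, the crux is to read off the signed selection encoded by $\widehat{\bm{\Pi}}_k$. By the construction made precise in the proof of Lemma~\ref{compacte-form-add-variables}, $\bm{\Pi}_k$ pairs, for each $\ell$ with $\mathcal{I}_{k,\ell} \neq \varnothing$, the $t$-th element of the intersection with position $\mathcal{P}_{k,\ell}(t)$ in $\mathcal{I}_k$ and with row $\mathcal{S}_{k,\ell}(t)$ of $\bm{D}$, assigning the sign accordingly. Enumerate $\widehat{\mathcal{I}}_{k,\ell}$ by $t = (q-1)L_d + b$, with $q$ indexing $\mathcal{I}_{k,\ell}$ and $b \in [L_d]$ the offset. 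By (i) and (iii), this $t$-th element of $\widehat{\mathcal{I}}_{k,\ell}$ sits at position $\widehat{\mathcal{P}}_{k,\ell}(t) = (\mathcal{P}_{k,\ell}(q)-1)L_d + b$ in $\widehat{\mathcal{I}}_k$ and at row $\widehat{\mathcal{S}}_{k,\ell}(t) = (\mathcal{S}_{k,\ell}(q)-1)L_d + b$ of $\widehat{\bm{D}}$, crucially with the \emph{same} offset $b$ on both sides; hence the nonzero entry $(\widehat{\bm{\Pi}}_k)_{uv}$ occurs exactly at $u = (\mathcal{P}_{k,\ell}(q)-1)L_d + b$, $v = (\mathcal{S}_{k,\ell}(q)-1)L_d + b$, carrying the same sign as $(\bm{\Pi}_k)_{\mathcal{P}_{k,\ell}(q),\,\mathcal{S}_{k,\ell}(q)}$.

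Finally I would match coordinates. Writing $u = (i-1)L_d + r$ and $v = (j-1)L_d + s$, the previous step forces $i = \mathcal{P}_{k,\ell}(q)$, $j = \mathcal{S}_{k,\ell}(q)$ and $r = s = b$; thus $(\widehat{\bm{\Pi}}_k)_{uv}$ is nonzero only when $r = s$, where it equals $(\bm{\Pi}_k)_{ij}$, matching $(\bm{\Pi}_k \otimes \bm{I}_{L_d})_{uv}$ term by term. Summing over the $\ell$ with $\mathcal{I}_{k,\ell}\neq\varnothing$ (the same indices in both pictures by (ii)) and handling the $\ell < k$ contributions identically with sign $-1$, every entry agrees, giving $\widehat{\bm{\Pi}}_k = \bm{\Pi}_k \otimes \bm{I}_{L_d}$. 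I expect the main obstacle to be item (iii) together with the offset-matching of the crux step: one must argue carefully that the inflation acts with the same offset $b$ on the position index (the $\mathcal{P}$-side, governing rows of $\widehat{\bm{\Pi}}_k$) and on the variable index (the $\mathcal{S}$-side, governing its columns), since it is precisely this shared offset that yields the diagonal factor $\bm{I}_{L_d}$ rather than an all-ones block.
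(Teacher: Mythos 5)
Your proposal is correct and follows essentially the same route as the paper: an entrywise comparison in the block coordinates $u=(i-1)L_d+r$, $v=(j-1)L_d+s$, reducing the claim to showing that the position sets $\mathcal{P}_{k,\ell}$ and $\mathcal{S}_{k,\ell}$ inflate compatibly under $i\mapsto(i-1)L_d+i_b$ so that the nonzero entries land only on the diagonal of each $L_d\times L_d$ block. If anything, your argument is slightly more explicit than the paper's about why the offset $b$ is the \emph{same} on the $\mathcal{P}$-side and the $\mathcal{S}$-side (the paper only verifies set membership $\hat\jmath\in\widehat{\mathcal{P}}_{k,\ell}$, $\hat\imath\in\widehat{\mathcal{S}}_{k,\ell}$ and leaves the offset matching implicit in the choice $\hat e=(e-1)L_d+j_d$), which is indeed the point that produces $\bm{I}_{L_d}$ rather than an all-ones block.
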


\begin{proof}
First, recall that the matrix $   \bm{\widehat{\Pi}}_k \in \mathbb{R}^{ \vert \widehat{\mathcal{I}}_k \vert  \times n_{\widehat{\mathcal{I}} } }$ is defined by 
\begin{equation*}
   ( \bm{\widehat{\Pi}}_k )_{\hat{i}\hat{j}}=\left\lbrace\begin{aligned}
  & 1 \text{ if } \exists \ell >k: \widehat{\mathcal{I}}_{k,\ell} \neq \varnothing, \hat{i} \in \widehat{\mathcal{P}}_{k,\ell} \text{ and } \hat{j} \in \widehat{\mathcal{S}}_{k,\ell}  ,\\
  -& 1 \text{ if } \exists \ell <k: \widehat{\mathcal{I}}_{\ell,k} \neq \varnothing, \hat{i} \in \widehat{\mathcal{P}}_{\ell,k} \text{ and } \hat{j} \in \widehat{\mathcal{S}}_{\ell,k} ,\\
  & 0 \text{ otherwise },
\end{aligned}\right.
\end{equation*}
where $\widehat{\mathcal{S}}_{k,\ell}= \lbrace \widehat{s}_{k,\ell}+1,\cdots,\widehat{s}_{k,\ell}+ \vert \mathcal{\widehat{\overline{I}}}_{k,\ell}\vert \rbrace$ with $\widehat{\mathcal{\overline{I}}}_{k,\ell}=\lbrace i \in \mathcal{I}_{k,\ell}:(\widehat{\bm{D}}_{k,\ell})_{ij} \neq 0\rbrace$,  $\widehat{s}_{k,\ell}=\sum_{(a,b) < (k,\ell)}\vert \widehat{\overline{\mathcal{I}}}_{k,\ell}\vert $, and $\widehat{\mathcal{P}}_{k,\ell}=\left\lbrace \hat{j} \in [\vert \widehat{\mathcal{I}}_k\vert]: \exists \hat{e} \in \widehat{\mathcal{I}}_{k,\ell } \text{ such that } \hat{e}=\hat{e}_{\hat{j}}\right\rbrace$.

 Let $(\hat{i},\hat{j}) \in \vert \widehat{\mathcal{I}}_k \vert  \times [n_{\mathcal{\widehat{I}}}]$, $ (i,j) \in \vert \mathcal{I}_k \vert \times [n_{\mathcal{I}}]   $ and $i_d,j_d \in [d]$ such that $\hat{i}=(i-1)L_d+i_d$ and $\hat{j}=(j-1)L_d+j_d$. Because the identity matrix $\bm{I}_d$ is nonzero only when $i_d=j_d$, we have 
\begin{equation}
  (\bm{\Pi}_k \otimes \bm{I}_d)_{\hat{i}\hat{j}}=\left\lbrace \begin{aligned}
   & (\bm{\Pi}_k)_{ij} \text{ if } \hat{i}=(i-1)L_d+i_d \text{ and } \hat{j}=(j-1)L_d+i_d\\
    & 0 \text{ otherwise}.
  \end{aligned}\right.
    \label{proof-lemma6-Main-equality}
\end{equation}
Moreover, we will show that $\hat{j} \in \widehat{\mathcal{P}}_{k,\ell}$ whenever $j \in \mathcal{P}_{k,\ell}$, and $\hat{i} \in \widehat{\mathcal{S}}_{k,\ell}$ whenever $i \in \mathcal{S}_{k,\ell}$.
Let $j \in \mathcal{P}_{k,\ell}$, then $ j \in [\vert \mathcal{I}_{k} \vert] $ and $\exists e  \in \mathcal{I}_{k,\ell}$ such that $e=e_j $. Therefore, we have $ \hat{j} \in [\vert \widehat{\mathcal{I}}_k \vert]$, and for $\hat{e}=(e-1)L_d+j_d \in \mathcal{\widehat{I}}_{k,\ell}$, we have $\hat{e}=\hat{e}_{\hat{j}} $. It follows that $\hat{j} \in \widehat{\mathcal{P}}_{k,\ell}$.
Now, let $i \in \mathcal{S}_{k,\ell}$. There exists $ i_{k,\ell} \in [\vert \overline{\mathcal{I}}_{k,\ell} \vert]$ such that $ i=s_{k,\ell}+i_{k,\ell}$. Thus we have $\hat{i}=(s_{k,\ell}+i_{k,\ell}-1)L_d+i_d=L_ds_{k,\ell}+(i_{k,\ell}-1)L_d+i_d$. Since $\widehat{s}_{k,\ell}=L_ds_{k,\ell}$ and $ (i_{k,\ell}-1)L_d+i_d \in [\vert \widehat{\overline{\mathcal{I}}}_{k,\ell} \vert] $, we get $\hat{i} \in \widehat{\mathcal{S}}_{k,\ell}$.
 
 Therefore, for $\hat{i}=(i-1)L_d+i_d $ and $ \hat{j}=(j-1)L_d+i_d$, we obtain
\begin{align*}
  (\bm{\Pi}_k \otimes \bm{I}_d)_{\hat{i}\hat{j}}&=\left\lbrace \begin{aligned}
   & 1 \text{ if } \exists \ell >k: \widehat{\mathcal{I}}_{k,\ell} \neq \varnothing, \hat{i} \in \widehat{\mathcal{P}}_{k,\ell} \text{ and } \hat{j} \in \widehat{\mathcal{S}}_{k,\ell} ,\\
   -& 1 \text{ if } \exists \ell <k: \widehat{\mathcal{I}}_{\ell,k} \neq \varnothing, \hat{i} \in \widehat{\mathcal{P}}_{\ell,k} \text{ and } \hat{j} \in \widehat{\mathcal{S}}_{\ell,k},\\
    & 0 \text{ otherwise},
  \end{aligned}\right.\\
  &=\widehat{\bm{\Pi}}_k.
\end{align*}
\end{proof}

\subsection{Physical interpretation of the additional variables and their elimination}
\label{sec:physical}
In this section, we provide a physical interpretation for the additionally introduced variables $\bm{c}$ and $\bm{d}$. To this goal, we deal with it from a prior-to-mSOS point of view and we exploit the equivalence of \eqref{compliance_problem} with \eqref{compliance_problem_sdp} and write the prior AD decomposition of \eqref{compliance_problem} as


\begin{equation}
\begin{aligned}
  \tilde{\gamma}_{\text{AD}}^*=&\underset{\substack{\bm{x}, \bm{u}, \bm{d}}}{\min} \sum_{k=1}^p c_k,  \\
 \text{s.t. }& \bm{K}_k(\bm{x})\bm{u}_k-(\bm{f}_k(\bm{x})+\bm{\Pi}_k\bm{d})= \bm{0},k \in [p],\\
 &\overline{w}-\sum_{e=1}^{n_\mathrm{e}}\ell_e \rho_e x_e \geq 0, \\
 & \bm{f}_k(\bm{x})^T\bm{u}_k= c_k,\\
 & \bm{x} \geq \bm{0},
\end{aligned}
\label{AD-compliance-phys}
\end{equation}
in which for $k\in [p]$, $\bm{u}_k=\bm{u}_{\mathcal{I}_k} \in \mathbb{R}^{\vert \mathcal{I}_k\vert}$ is a part of the displacement vector $\bm{u}$ corresponding to the generalized nodal displacements related to the subdomain $\mathcal{D}_k$. According to \cite[Theorem 5]{kovcvara2021decomposition}, we have $\tilde{\gamma}_{\text{AD}}^*=\gamma_{\text{AD}}^*=\gamma^*$.


From \eqref{AD-compliance-phys}, it follows that $c_k$ has the meaning of the potential energy of the external forces (compliance) of the elements in the subset $\mathcal{I}_k$. Clearly, $\sum_{k=1}^p c_k$ provides the overall compliance $\gamma$.

Furthermore, the equilibrium equations given in \eqref{AD-compliance-phys} constitute equilibrium of the substructure formed by elements in the partition $\mathcal{D}_k$. However, due to the substructures formed by elements $\mathcal{D}_k$ being connected in reality to other elements, i.e., $\forall k \ \exists \ell: (\mathcal{I}_{k,\ell}\cup \mathcal{I}_{\ell,k}) \neq \varnothing$, it is needed to introduce so-called interface forces $\bm{d}_k$ that represent the mechanical interaction between the isolated elements $\mathcal{D}_k$ and the remainder of the structure $[n_\mathrm{e}]\setminus \mathcal{D}_k$. Following Newton's Third Law, these forces maintain the equilibrium conditions that existed in the original complete structure. For each interface where the structure is sectioned, the forces and moments acting on the isolated portion are equal and opposite to those acting on the remaining structure at that interface. 

\begin{example}\label{ex:frame3}
Consider the structure shown in Fig. \ref{fig:free_body_diagram}a that contains three structural elements $1$--$3$, four nodes $a$--$d$, and we set $\mathcal{D}_1 = {1}$, $\mathcal{D}_2 = {2}$ and $\mathcal{D}_3 = {3}$. Based on the kinematic boundary conditions, there is one degree of freedom (rotation) allowed at the node $a$, which we denote as $\{1\}$. At the nodes $b$ and $c$, both translations and rotations are allowed, resulting in $3$ degrees of freedom per node. We label these as $\{2,3,4\}$ and $\{5,6,7\}$, respectively. Finally, the node $d$ prevents translations and rotation, so that there is no degree of freedom. Since element $1$ connects nodes $a$ and $b$, we have $\mathcal{I}_1 = \{1, 2, 3, 4\}$. Analogously, we get $\mathcal{I}_2 = \{2,3,4,5,6,7\}$, and $\mathcal{I}_3 = \{5,6,7\}$.

Furthermore, we evaluate the intersections as $\mathcal{I}_{1,2} = \{2,3,4\}$ and $\mathcal{I}_{2,3} = \{5,6,7\}$. Because of $ \mathbf{f}(\mathbf{x}) = \begin{bmatrix}0 & 0 & -q_1& 0 & 0 & q_2 & 0\end{bmatrix}^T \in \mathbb{R}^{n_\mathrm{dof}\times 1}$, we need to introduce only one variable for each intersection in $\mathcal{I}_{k,\ell}$. In particular, we introduce $d_{1,2}^{(1)}$, $d_{1,2}^{(2)}$ and $d_{1,2}^{(3)}$ which denote the horizontal, vertical, and moment internal forces that balance the interaction of the domains $\mathcal{D}_1$ and $\mathcal{D}_2$ at the node $b$. Analogously, we define $d_{2,3}^{(1)}$, $d_{2,3}^{(2)}$ and $d_{2,3}^{(3)}$ for the horizontal, vertical and moment internal forces balancing interaction between domains $\mathcal{D}_2$ and $\mathcal{D}_3$. This yields the free-body diagram shown in Fig.~\ref{fig:free_body_diagram}b.
\end{example}

\begin{figure}[!htbp]
    \centering
    \begin{subfigure}{\textwidth}
    \centering
    \begin{tikzpicture}
    \scaling{2.0}
    \point{a}{0}{0};
    \point{b}{1}{0};
    \point{bu}{1}{0.1};
    \point{c}{2}{0};
    \point{cu}{2}{0.1};
    \point{d}{3}{0};
    \beam{1}{a}{b};
    \beam{1}{b}{c};
    \beam{1}{c}{d};
    \notation{2}{b}{};
    \notation{2}{c}{};
    \support{1}{a};
    \support{3}{d}[90];
    \load{1}{b}[270][0.5][-0.8];
    \load{1}{c}[270][-0.5][-0.3];
    \notation{1}{b}{$q_1$}[above=7mm];
    \notation{1}{c}{$q_2$}[above=7mm];
    \notation{4}{a}{b}[$1$];
    \notation{4}{b}{c}[$2$];
    \notation{4}{c}{d}[$3$];
    \dimensioning{1}{a}{b}{-1.2}[$\ell_1$];
    \dimensioning{1}{b}{c}{-1.2}[$\ell_2$];
    \dimensioning{1}{c}{d}{-1.2}[$\ell_3$];
    \notation{1}{a}{\textcircled{$a$}}[above];
    \notation{1}{b}{\textcircled{$b$}}[below=2mm];
    \notation{1}{c}{\textcircled{$c$}}[below=3mm];
    \notation{1}{d}{\textcircled{$d$}}[below left=0.5mm];
    \end{tikzpicture}
    \caption{}
    \end{subfigure}
    
    \vspace{3mm}\begin{subfigure}{\textwidth}
    \centering
    \begin{tikzpicture}
    \scaling{2.0}
    \point{a}{0}{0};
    \notation{1}{a}{\textcircled{$a$}}[above];
    \point{b1}{1}{0};
    \point{b2}{2}{0};
    \notation{1}{b1}{\textcircled{$b$}}[below];
    \notation{1}{b2}{\textcircled{$b$}}[above];
    \point{c1}{3}{0};
    \point{c2}{4}{0};
    \notation{1}{c1}{\textcircled{$c$}}[below=1mm];
    \notation{1}{c2}{\textcircled{$c$}}[above];
    \point{d}{5}{0};
    \notation{1}{d}{\textcircled{$d$}}[below left=0.5mm];
    \point{bu}{2}{0.2};
    \point{cu}{3}{0.2};
    \beam{1}{a}{b1};
    \beam{1}{b2}{c1};
    \beam{1}{c2}{d};
    \support{1}{a};
    \support{3}{d}[90];
    \load{1}{b2}[270][0.5][-1.2];
    \load{1}{c1}[270][-0.5][-0.7];
    \notation{1}{b2}{$q_1$}[above=11mm];
    \notation{1}{c1}{$q_2$}[above=11mm];
    \notation{4}{a}{b1}[$1$];
    \notation{4}{b2}{c1}[$2$];
    \notation{4}{c2}{d}[$3$];
    \load{1}{b1}[180][0.5][-0.6];
    \load{1}{b1}[270][0.5][-0.6];
    \load{2}{b1}[80][-70];
    \notation{1}{b1}{$d_{1,2}^{(1)}$}[below right=0.5mm];
    \notation{1}{b1}{$d_{1,2}^{(2)}$}[above left];
    \notation{1}{b1}{$d_{1,2}^{(3)}$}[above right=2.5mm];
    \load{1}{b2}[180][-0.5][0.6];
    \load{1}{b2}[270][-0.5][0.6];
    \load{2}{b2}[-170][70];
    \notation{1}{b2}{$d_{1,2}^{(1)}$}[above left];
    \notation{1}{b2}{$d_{1,2}^{(2)}$}[below right];
    \notation{1}{b2}{$d_{1,2}^{(3)}$}[below left=2.5mm];
    \load{1}{c1}[180][0.5][-0.6];
    \load{1}{c1}[270][0.5][-0.6];
    \load{2}{c1}[80][-70];
    \notation{1}{c1}{$d_{2,3}^{(1)}$}[below right=0.5mm];
    \notation{1}{c1}{$d_{2,3}^{(2)}$}[above left];
    \notation{1}{c1}{$d_{2,3}^{(3)}$}[above right=2.5mm];
    \load{1}{c2}[180][-0.5][0.6];
    \load{1}{c2}[270][-0.5][0.6];
    \load{2}{c2}[-170][70];
    \notation{1}{c2}{$d_{2,3}^{(1)}$}[above left=0.5mm];
    \notation{1}{c2}{$d_{2,3}^{(2)}$}[below right];
    \notation{1}{c2}{$d_{2,3}^{(3)}$}[below left=2.5mm];
    \dimensioning{1}{a}{b1}{-1.2}[$\ell_1$];
    \dimensioning{1}{b2}{c1}{-1.2}[$\ell_2$];
    \dimensioning{1}{c2}{d}{-1.2}[$\ell_3$];
    \end{tikzpicture}
    \caption{}
    \end{subfigure}
    \caption{Physical interpretation of the variables $\bm{d}_k$: (a) A beam composed of three segments; (b) Free-body diagram showing internal forces and moments at domain interfaces.}
    \label{fig:free_body_diagram}
\end{figure}
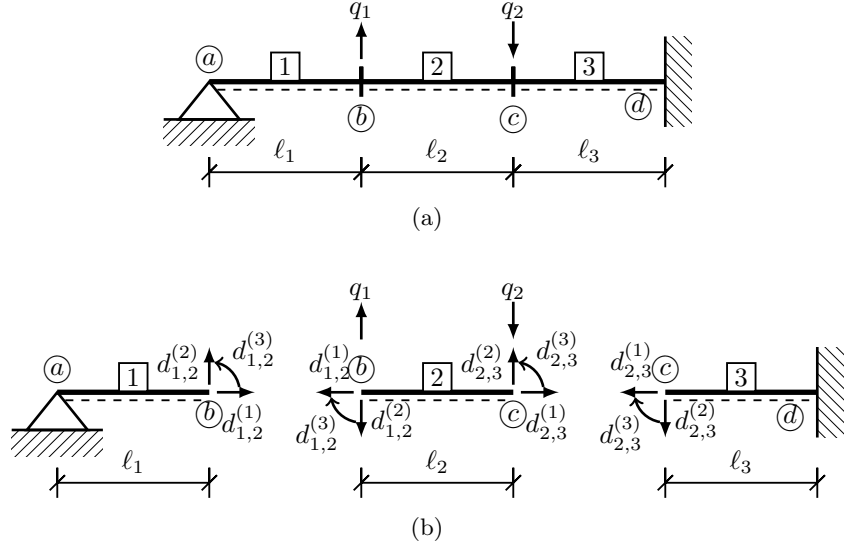

\subsubsection*{Additional variables elimination}\label{sec:po_variable_reduction}

Next, we consider the variable elimination procedure and the related physical interpretation. To this goal, let $\bm{x} \ge \bm{0}$ and $\tilde{\bm{x}}>0$. By construction of the stiffness matrix, the null space of the matrix $\bm{K}_k(\tilde{\bm{x}})$ is constant and $\mathrm{Null}(\bm{K}_k(\tilde{\bm{x}})) \subseteq \mathrm{Null}(\bm{K}_k(\bm{x}))$. Thus, for any $\bm{x} \in \mathbb{R}^{n_x}$, if $\bm{v} \in \text{Null}(\bm{K}_k(\bm{x}))$, then by \eqref{stifness_matrix}, we have $\bm{K}_{0}\bm{v}+\sum_{e\in \mathcal{D}_k} \bm{K}_{e}^{(1)}x_e\bm{v}+\sum_{e\in \mathcal{D}_k}\bm{K}_{e}^{(2)}x_e^2\bm{v}+\sum_{e\in \mathcal{D}_k}\bm{K}_{e}^{(3)}x_e^3\bm{v} =\bm{0}$. Because the monomials $1, x_k, x_k^2,x_k^3 $ are linearly independent and $\bm{K}_0, \bm{K}_{e}^{(i)} \succeq 0$ for $i \in \lbrace 1, 2, 3 \rbrace$, we get $\bm{K}_0 \bm{v}=\bm{0}$ and $\bm{K}_{e}^{(i)}\bm{v}=\bm{0}$ for all $e \in [n_e]$ and $i$. Therefore, the null space of $\bm{K}_k(\bm{x})$ remains in the intersection of the null spaces of $\bm{K}_{0}$, $\bm{K}_{e}^{(1)}$, $\bm{K}_{e}^{(2)}$ and $\bm{K}_{e}^{(3)}$.

Moreover, in the topology optimization, there exists cheaper alternatives, (see \cite{xue2019method,shklarski2009computing}), as the null space has also a physical interpretation of rigid body displacements or rotations. This makes the application of Proposition \ref{proposition-projection-msos} suitable.

\begin{example}
Consider again the setting of Example \ref{ex:frame3}. For $\mathcal{D}_1$, we have the equilibrium system $\bm{K}_1 \bm{u}_1 = \bm{f}_1 + \bm{D}_1$, in which $I_1(x_1)$ is a non-negative polynomial of degree at most three, non-negative constraints $E_1, \ell_1 \in \mathbb{R}_{> 0}$, 
$$\bm{K}_1 (x_1) =\bm{\mathscr{E}}_{\mathcal{I}_1}^T \begin{bmatrix}
        \frac{4E_1 I_1(x_1)}{\ell_1}  & -\frac{6E_1 I_1(x_1)}{\ell_1^2} & \frac{2 E_1 I_1(x_1)}{\ell_1}\\
        0 & \frac{E_1 x_1}{\ell_1} & 0 & 0\\
        -\frac{6 E_1 I_1(x_1)}{\ell_1^2} & 0 & \frac{12 E_1 I_1(x_1)}{\ell_1^3} & -\frac{6 E_1 I_1(x_1)}{\ell_1^2}\\
        \frac{2E_1 I_1(x_1)}{\ell_1} & 0 & -\frac{6 E_1 I_1(x_1)}{\ell_1^2} & \frac{4 E_1 I_1(x_1)}{\ell_1}
        \end{bmatrix}\bm{\mathscr{E}}_{\mathcal{I}_1},
           $$
and $ \bm{f}_1 + \bm{D}_1 = \bm{\mathscr{E}}_{\mathcal{I}_1}^T\begin{bmatrix}
    0 &   d_{1,2}^{(1)} &  d_{1,2}^{(2)} & d_{1,2}^{(3)}  \end{bmatrix}^T.$ For all $x_1\ge 0$, the stiffness matrix $\bm{K}_1(x_1)$ has one singular eigenvalue associated with the eigenvector $\bm{n}_1 = \begin{bmatrix}1 & 0 & \ell_1 & 1\end{bmatrix}^{T}$. Hence, $(\bm{f}_1+\bm{D}_1)^\mathrm{T} \bm{n}_1 = 0$ results in the equality
$$
\circlearrowleft_a:\quad d_{1,2}^{(2)}\ell_1 + d_{1,2}^{(3)} = 0,
$$
which is the moment static equilibrium of the substructure $\mathcal{D}_1$ around the point $a$, which is the only rigid body mode not prevented by the kinematic boundary conditions. Finally, the range space bases evaluate as
$\bm{P}_1=\bm{\mathscr{E}}_{\mathcal{I}_1}^T \begin{bmatrix}
    0 & -\ell_1 & _1
\end{bmatrix}^T$.

Further, let us consider $\mathcal{D}_2$ and the equilibrium system $\bm{K}_2 \bm{u}_2 = \bm{f}_2 + \bm{D}_2$. Then, for $I_2(x_2)$ being a non-negative polynomial of degree at most three and non-negative constraints $E_2, \ell_2 \in \mathbb{R}_{> 0}$, we have
$$
\bm{K}_2(x_2) =
\bm{\mathscr{E}}_{\mathcal{I}_2}^T\begin{bmatrix}
        \frac{E_2 x_2}{\ell_2} & 0 & 0 & -\frac{E_2 x_2}{\ell_2} & 0 & 0\\
        0 & \frac{12 E_2 I_2(x_2)}{\ell_2^3} & \frac{6 E_2 I_2(x_2)}{\ell_2^2} & 0 & -\frac{12 E_2 I_2(x_2)}{\ell_2^3} & \frac{6E_2 I_2(x_2)}{\ell_2^2}\\
        0 & \frac{6 E_2 I_2(x_2)}{\ell_2^2} &\frac{4E_2 I_2(x_2)}{\ell_2} & 0 & -\frac{6E_2 I_2(x_2)}{\ell_2^2} & \frac{2 E_2 I_2(x_2)}{\ell_2}\\
        -\frac{E_2 x_2}{\ell_2}& 0 & 0 & \frac{E_2 x_2}{\ell_2} & 0 & 0\\
        0 & -\frac{12E_2I_2(x_2)}{\ell_2^3} &-\frac{6 E_2 I_2(x_2)}{\ell_2^2} & 0 & \frac{12 E_2 I_2(x_2)}{\ell_2^3} & -\frac{6 E_2 I_2(x_2)}{\ell_2^2}\\
        0 & \frac{6E_2 I_2(x_2)}{\ell_2^2} & \frac{2E_2 I_2(x_2)}{\ell_2} & 0 & -\frac{6 E_2 I_2(x_2)}{\ell_2^2} & \frac{4 E_2 I_2(x_2)}{\ell_2}
    \end{bmatrix}\bm{\mathscr{E}}_{\mathcal{I}_2}
$$
and $ \bm{f}_2+ \bm{D}_2 = \bm{\mathscr{E}}_{2}^T\begin{bmatrix}
    -d_{1,2}^{(1)} & q_1-d_{1,2}^{(2)} & -d_{1,2}^{(3)} & d_{2,3}^{(1)} & d_{2,3}^{(2)}-q_2 & d_{2,3}^{(3)}
\end{bmatrix}^T,
$
with point loads $q_1, q_2 \in \mathbb{R}$ as in Fig. \ref{fig:free_body_diagram}. For $x_2 \ge 0$, the stiffness matrix $\bm{K}_2 (x_2)$ has singularities associated to the three Null space bases $\bm{n}_1 = \begin{bmatrix} 1 & 0 & 0 & 1 & 0 & 0 \end{bmatrix}^T$, $\bm{n}_2 = \begin{bmatrix} 0 & 1 & 0 & 0 & 1 & 0 \end{bmatrix}^T$ and $\bm{n}_3 = \begin{bmatrix} 0 & 0 & 1 & 0 & \ell_2 & 1 \end{bmatrix}^\mathrm{T}$. These then correspond to static equilibria for the three rigid body modes not restricted at $\mathcal{D}_2$:
\begin{align*}
    \rightarrow:&\quad -d_{1,2}^{(1)} + d_{2,3}^{(1)} = 0,\\
    \uparrow:&\quad - d_{1,2}^{(2)} + d_{2,3}^{(2)} = q_2-q_1,\\
    \circlearrowleft_b:&\quad -d_{1,2}^{(3)} + d_{2,3}^{(3)} + \ell_2 d_{2,3}^{(2)} = \ell_2 q_2.
\end{align*}
Hence, the range space bases evaluate as
$
\bm{P}_2 =\bm{\mathscr{E}}_{\mathcal{I}_2}^T \begin{bmatrix}
        -1 & 0 & 0 \\
        0 & -1 & -\ell_2 \\
        0 & 0 & -1 
\end{bmatrix}^T.
$

Finally, for $\mathcal{D}_3$, $E_3,\ell_3 \in \mathbb{R}_{>0}$ and a non-negative polynomial $I_3(x_3)$ of degree at most 3, we have
$$
\bm{K}_3 =\bm{\mathscr{E}}_{\mathcal{I}_3}^T \begin{bmatrix}\frac{E_3 x_3}{\ell_3} & 0 & 0\\ 0 & \frac{12 E_3 I_3(x_3)}{\ell_3^3} & \frac{6 E_3 I_3 (x_3)}{\ell_3^2}\\ 0 & \frac{6 E_3 I_3(x_3)}{\ell_3^2} & \frac{4 E_3 I_3(x_(3)}{\ell_3} \end{bmatrix}\bm{\mathscr{E}}_{\mathcal{I}_3},
\bm{f}_3 + \bm{D}_3 = \bm{\mathscr{E}}_{\mathcal{I}_3}^T\begin{bmatrix}
    -d_{2,3}^{(1)} \\
    -d_{2,3}^{(2)} \\
    -d_{2,3}^{(3)}
\end{bmatrix},
$$
in which $\bm{K}_3(x_3)$ is regular due to prevented translations and rotation by the clamped support at the right end the stucture, recall Fig.~\ref{fig:free_body_diagram}. Hence, $\bm{P}_3 = \bm{I}_3$.

Now, we combine the Null space equations into a single system, and express its solution as
%
%

%
$$
\begin{multlined}\bm{D} = \begin{bmatrix}
     0\\
 (q_1 \ell_2^2 + \ell_1 q_1 \ell_2 + q_1 - q_2)/(2 \ell_1^2 + 2 \ell_1 \ell_2 + \ell_2^2 + 2)\\
- (\ell_1 (q_1 - q_2) - (\ell_2 q_1 (\ell_2^2 + 2))/2)/(2 \ell_1^2 + 2 \ell_1 \ell_2 + \ell_2^2 + 2) - (\ell_2 q_1)/2\\
 0\\
 q_2 - (2 q_1 \ell_1^2 + \ell_2 q_1 \ell_1 + q_1 + q_2)/(2 \ell_1^2 + 2 \ell_1 \ell_2 + \ell_2^2 + 2)\\
 -(\ell_1 (q_1 - q_2) - \ell_2 (q_2 + (q_1 (2 \ell_1^2 + 2))/2))/(2 \ell_1^2 + 2 \ell_1 \ell_2 + \ell_2^2 + 2)
\end{bmatrix} \\+ \begin{bmatrix}
        1 & 0 & 0\\
        0 & -\ell_2/\ell_1 & -1/\ell_1\\
        0 & \ell_2 & 1\\
        1 & 0 & 0\\
        0 & 1 & 0\\
        0 & 0 & 1
\end{bmatrix}
\begin{bmatrix}
    d_R^{(1)}\\
    d_R^{(2)}\\
    d_R^{(3)}
\end{bmatrix},\end{multlined}$$
which allows us to substitute the original variables $\bm{D}$ as well as to reduce the sizes of matrix inequalities for the domains $\mathcal{D}_1$ and $\mathcal{D}_2$ to $4$ and $4$ from the original $5$ and $7$. 


\end{example}

\subsection{Tables}
\label{appendix-tables}

Here, we present a detailed data (Tables \ref{tab:sw1}--\ref{tab:sw4}) corresponding to Figures \ref{fig:mSOS-rel1}--\ref{fig:NMT-rel4} presented in Section \ref{sec:numerics}.

\begin{table}[!htbp]
    \centering
    \scriptsize
    \setlength{\tabcolsep}{1pt}
    \renewcommand{\arraystretch}{1.0}
     \resizebox{\textwidth}{!}{%
    \begin{tabular}{c|c|c|c|c|c}
         \toprule
        $n_e$ & & mSOS & mSOS+AD & mSOS+NMT & mSOS+NMT+AD\\
        \hline
         1 & $t$ [s] & $0.01$ & $0.01$ & $0.01$ & $0.01$ \\
           & l.b. & $3.98\times10^{-2}$ & $3.98\times10^{-2}$ & $3.98\times10^{-2}$ & $3.98\times10^{-2}$\\
           & $n$ & $5$ & $5$ & $5$ & $5$ \\
           & size & $3\times1, 2\times3$ & $3\times1, 2\times3$ & $3\times1, 2\times3$ & $3\times1, 2\times3$ \\
        \hline
         2 & $t$ [s] & $0.02$ & $0.02$ & $0.02$ & $0.02$ \\
           & l.b. & $1.99\times10^{-2}$ & $1.99\times10^{-2}$ & $1.99\times10^{-2}$ & $1.99\times10^{-2}$\\
           & $n$ & $9$ & $9$ & $9$ & $9$ \\
           & size & $4\times1, 1\times4, 1\times6$ & $4\times1, 1\times4, 1\times6$ & $4\times1, 1\times4, 1\times6$ & $4\times1, 1\times4, 1\times6$ \\
        \hline
         3 & $t$ [s] & $0.02$ & $0.02$ & $0.02$ & $0.02$ \\
           & l.b. & $1.32\times10^{-2}$ & $1.32\times10^{-2}$ & $1.32\times10^{-2}$ & $1.32\times10^{-2}$\\
           & $n$ & $14$ & $16$ & $14$ & $16$ \\
           & size & $5\times1, 1\times5, 1\times9$ & $5\times1, 1\times4, 1\times5, 1\times7$ & $5\times1, 1\times5, 1\times9$ & $5\times1, 1\times4, 1\times5, 1\times7$ \\
        \hline
         4 & $t$ [s] & $0.02$ & $0.02$ & $0.02$ & $0.02$ \\
           & l.b. & $9.89\times10^{-2}$ & $9.89\times10^{-3}$ & $9.89\times10^{-3}$ & $9.89\times10^{-3}$ \\
           & $n$ & $20$ & $23$ & $20$ & $23$ \\
           & size & $6\times1, 1\times6, 1\times12$ & $6\times1, 2\times4, 1\times6, 1\times7$ & $6\times1, 1\times6, 1\times12$ & $6\times1, 2\times4, 1\times6, 1\times7$ \\
        \hline
         5 & $t$ [s] & $0.03$ & $0.02$ & $0.03$ & $0.02$\\
           & l.b. & $7.91\times10^{-3}$ & $7.91\times10^{-3}$ & $7.91\times10^{-3}$ & $7.91\times10^{-3}$\\
           & $n$ & $27$ & $31$ & $27$ & $31$ \\
           & size & $7\times1, 1\times7, 1\times15$ & $7\times1, 3\times4, 2\times7$ & $7\times1, 1\times7, 1\times15$ & $7\times1, 3\times4, 2\times7$ \\
        \hline
         6 & $t$ [s] & $0.03$ & $0.02$ & $0.03$ & $0.02$\\
           & l.b. & $6.58\times10^{-3}$ & $6.58\times10^{-3}$ & $6.58\times10^{-3}$ & $6.58\times10^{-3}$\\
           & $n$ & $35$ & $40$ & $35$ & $40$ \\
           & size & $8\times1, 1\times8, 1\times18$ & $8\times1, 4\times4, 1\times7, 1\times8$ & $8\times1, 1\times8, 1\times18$ & $8\times1, 4\times4, 1\times7, 1\times8$ \\
        \hline
         7 & $t$ [s] & $0.04$ & $0.03$ & $0.04$ & $0.03$\\
           & l.b. & $5.64\times10^{-3}$ & $5.64\times10^{-3}$ & $5.64\times10^{-3}$ & $5.64\times10^{-3}$\\
           & $n$ & $44$ & $50$ & $44$ & $50$ \\
           & size & $9\times1, 1\times9, 1\times21$ & $9\times1, 5\times4, 1\times7, 1\times9$ & $9\times1, 1\times9, 1\times21$ & $9\times1, 5\times4, 1\times7, 1\times9$ \\
        \hline
         8 & $t$ [s] & $0.04$ & $0.03$ & $0.04$ & $0.03$\\
           & l.b. & $4.94\times10^{-3}$ & $4.94\times10^{-3}$ & $4.94\times10^{-3}$ & $4.94\times10^{-3}$\\
           & $n$ & $54$ & $61$ & $54$ & $61$ \\
           & size & $10\times1, 1\times10, 1\times24$ & $10\times1, 6\times4, 1\times7, 1\times10$ & $10\times1, 1\times10, 1\times24$ & $10\times1, 6\times4, 1\times7, 1\times10$ \\
        \hline
         9 & $t$ [s] & $0.05$ & $0.09$ & $0.05$ & $0.03$\\
           & l.b. & $4.39\times10^{-3}$ & $4.39\times10^{-3}$ & $4.39\times10^{-3}$ & $4.39\times10^{-3}$\\
           & $n$ & $65$ & $73$ & $65$ & $73$ \\
           & size & $11\times1, 1\times11, 1\times27$ & $11\times1, 7\times4, 1\times7, 1\times11$ & $11\times1, 1\times11, 1\times27$ & $11\times1, 7\times4, 1\times7, 1\times11$ \\
        \hline
        10 & $t$ [s] & $0.12$ & $0.03$ & $0.11$ & $0.03$ \\
           & l.b. & $3.95\times10^{-3}$ & $3.95\times10^{-3}$ & $3.95\times10^{-3}$ & $3.95\times10^{-3}$\\
           & $n$ & $77$ & $86$ & $77$ & $86$ \\
           & size & $12\times1, 1\times12, 1\times30$ & $12\times1, 8\times4, 1\times7, 1\times12$ & $12\times1, 1\times12, 1\times30$ & $12\times1, 8\times4, 1\times7, 1\times12$ \\
        \hline
        11 & $t$ [s] & $0.61$ & $0.04$ & $0.59$ & $0.04$\\
           & l.b. & $3.59\times10^{-3}$ & $3.59\times10^{-3}$ & $3.58\times10^{-3}$ & $3.59\times10^{-3}$\\
           & $n$ & $90$ & $100$ & $90$ & $100$ \\
           & size & $13\times1, 1\times13, 1\times33$ & $13\times1, 9\times4, 1\times7, 1\times13$ & $13\times1, 1\times13, 1\times33$ & $13\times1, 9\times4, 1\times7, 1\times13$ \\
        \hline
        12 & $t$ [s] & $0.14$ & $0.04$ & $0.14$ & $0.05$\\
           & l.b. & $3.29\times10^{-3}$ & $3.29\times10^{-3}$ & $3.29\times10^{-3}$ & $3.29\times10^{-3}$\\
           & $n$ & $104$ & $115$ & $104$ & $115$ \\
           & size & $14\times1, 1\times14, 1\times36$ & $14\times1, 10\times4, 1\times7, 1\times14$ & $14\times1, 1\times14, 1\times36$ & $14\times1, 10\times4, 1\times7, 1\times14$ \\
        \hline
        13 & $t$ [s] & $0.13$ & $0.05$ & $0.13$ & $0.07$\\
           & l.b. & $3.04\times10^{-3}$ & $3.04\times10^{-3}$ & $3.04\times10^{-3}$ & $3.04\times10^{-3}$\\
           & $n$ & $119$ & $131$ & $119$ & $131$ \\
           & size & $15\times1, 1\times15, 1\times39$ & $15\times1, 11\times4, 1\times7, 1\times15$ & $15\times1, 1\times15, 1\times39$ & $15\times1, 11\times4, 1\times7, 1\times15$ \\
        \hline
        14 & $t$ [s] & $0.14$ & $0.06$ & $0.13$ & $0.06$\\
           & l.b. & $2.82\times10^{-3}$ & $2.82\times10^{-3}$ & $2.82\times10^{-3}$ & $2.82\times10^{-3}$\\
           & $n$ & $135$ & $148$ & $135$ & $148$ \\
           & size & $16\times1, 1\times16, 1\times42$ & $16\times1, 12\times4, 1\times7, 1\times16$ & $16\times1, 1\times16, 1\times42$ & $16\times1, 12\times4, 1\times7, 1\times16$ \\
        \hline
        15 & $t$ [s] & $0.33$ & $0.07$ & $0.33$ & $0.07$\\
           & l.b. & $2.63\times10^{-3}$ & $2.63\times10^{-3}$ & $2.63\times10^{-3}$ & $2.63\times10^{-3}$\\
           & $n$ & $152$ & $166$ & $152$ & $166$ \\
           & size & $17\times1, 1\times17, 1\times45$ & $17\times1, 13\times4, 1\times7, 1\times17$ & $17\times1, 1\times17, 1\times45$ & $17\times1, 13\times4, 1\times7, 1\times17$ \\
              \bottomrule  
    \end{tabular}}
    \caption{Comparison of four moment-SOS hierarchies for the first-order relaxation of a double-hinged beam problem with varying number of elements ($n_\mathrm{e}$). The hierarchies are: standard moment-SOS (mSOS), mSOS with arrow decomposition (mSOS+AD), mSOS using nonmixed-term basis (mSOS+NMT), and mSOS combining nonmixed-term basis with arrow decomposition (mSOS+NMT+AD). For each method and problem size, we report: solution time ($t$) in seconds, computed lower bound (l.b.), number of variables in the relaxation ($n$), and sizes of matrix constraints (where $a\times b$ indicates $a$ matrix constraints of size $\mathbb{S}^b$).}
    \label{tab:sw1}
\end{table}

\begin{table}[!htbp]
    \centering
    \scriptsize
    \setlength{\tabcolsep}{1pt}
    \renewcommand{\arraystretch}{1.0}
     \resizebox{\textwidth}{!}{%
    \begin{tabular}{c|c|c|c|c|c}
         \toprule
        $n_e$ & & mSOS & mSOS+AD & mSOS+NMT & mSOS+NMT+AD\\
        \hline
         1 & $t$ [s] & $0.03$ & $0.02$ & $0.02$ & $0.02$\\
           & l.b. & $^*4.17\times10^{-2}$ & $^*4.17\times10^{-2}$ & $^*4.17\times10^{-2}$ & $^*4.17\times10^{-2}$\\
           & $n$ & $14$ & $14$ & $14$ & $14$ \\
           & size & $3\times3, 1\times6, 1\times9$ & $3\times3, 1\times6, 1\times9$ & $3\times3, 1\times5, 1\times9$ & $3\times3, 1\times5, 1\times9$ \\
        \hline
         2 & $t$ [s] & $0.05$ & $0.06$ & $0.06$ & $0.07$\\
           & l.b. & $^*4.00\times10^{-2}$ & $^*4.00\times10^{-2}$ & $^*4.00\times10^{-2}$ & $^*4.00\times10^{-2}$\\
           & $n$ & $34$ & $34$ & $34$ & $34$ \\
           & size & $4\times4, 1\times10, 1\times24$ & $4\times4, 1\times10, 1\times24$ & $4\times4, 1\times7, 1\times24$ & $4\times4, 1\times7, 1\times24$ \\
        \hline
         3 & $t$ [s] & $0.26$ & $0.14$ & $0.44$ & $0.13$ \\
           & l.b. & $^*3.91\times10^{-2}$ & $^*3.91\times10^{-2}$ & $^*3.91\times10^{-2}$ & $^*3.91\times10^{-2}$ \\
           & $n$ & $69$ & $99$ & $68$ & $98$ \\
           & size & $5\times5, 1\times15, 1\times45$ & $5\times5, 1\times15, 1\times20, 1\times35$ & $5\times5, 1\times9, 1\times45$ & $5\times5, 1\times9, 1\times20, 1\times35$ \\
        \hline
         4 & $t$ [s] & $0.39$ & $0.29$ & $1.53$ & $0.25$\\
           & l.b. & $3.79\times10^{-2}$ & $3.79\times10^{-2}$ & $3.78\times10^{-2}$ & $3.79\times10^{-2}$\\
           & $n$ & $125$ & $188$ & $120$ & $183$ \\
           & size & $6\times6, 1\times21, 1\times72$ & $6\times6, 1\times21, 2\times24, 1\times42$ & $6\times6, 1\times11, 1\times72$ & $6\times6, 1\times11, 2\times24, 1\times42$ \\
        \hline
         5 & $t$ [s] & $1.91$ & $0.49$ & $5.62$ & $0.41$\\
           & l.b. & $3.63\times10^{-2}$ & $3.63\times10^{-2}$ & $3.62\times10^{-2}$ & $3.62\times10^{-2}$\\
           & $n$ & $209$ & $321$ & $194$ & $306$ \\
           & size & $7\times7, 1\times28, 1\times105$ & $7\times7, 4\times28, 1\times49$ & $7\times7, 1\times13, 1\times105$ & $7\times7, 1\times13, 3\times28, 1\times49$ \\
        \hline
         6 & $t$ [s] & $20.62$ & $0.66$ & $10.98$ & $0.56$\\
           & l.b. & $3.43\times10^{-2}$ & $3.43\times10^{-2}$ & $3.42\times10^{-2}$ & $3.42\times10^{-2}$\\
           & $n$ & $329$ & $509$ & $294$ & $474$ \\
           & size & $8\times8, 1\times36, 1\times144$ & $8\times8, 4\times32, 1\times36, 1\times56$ & $8\times8, 1\times15, 1\times144$ & $8\times8, 1\times15, 4\times32, 1\times56$ \\
        \hline
         7 & $t$ [s] & $13.10$ & $1.17$ & $8.95$ & $0.82$\\
           & l.b. & $3.22\times10^{-2}$ & $3.22\times10^{-2}$ & $3.21\times10^{-2}$ & $3.21\times10^{-2}$\\
           & $n$ & $494$ & $764$ & $424$ & $694$ \\
           & size & $9\times9, 1\times45, 1\times189$ & $9\times9, 5\times36, 1\times45, 1\times63$ & $9\times9, 1\times17, 1\times189$ & $9\times9, 1\times17, 5\times36, 1\times63$ \\
        \hline
         8 & $t$ [s] & $27.34$ & $1.85$ & $17.50$ & $1.49$\\
           & l.b. & $3.01\times10^{-2}$ & $3.01\times10^{-2}$ & $^+3.00\times10^{-2}$ & $3.01\times10^{-2}$\\
           & $n$ & $714$ & $1099$ & $588$ & $973$ \\
           & size & $10\times10, 1\times55, 1\times240$ & $10\times10, 6\times40, 1\times55, 1\times70$ & $10\times10, 1\times19, 1\times240$ & $10\times10, 1\times19, 6\times40, 1\times70$ \\
        \hline
         9 & $t$ [s] & $18.27$ & $5.30$ & $33.35$ & $3.34$\\
           & l.b. & $2.81\times10^{-2}$ & $2.81\times10^{-2}$ & $^+2.80\times10^{-2}$ & $2.81\times10^{-2}$\\
           & $n$ & $1000$ & $1528$ & $790$ & $1318$ \\
           & size & $11\times11, 1\times66, 1\times297$ & $11\times11, 7\times44, 1\times66, 1\times77$ & $11\times11, 1\times21, 1\times297$ & $11\times11, 1\times21, 7\times44, 1\times77$ \\
        \hline
        10 & $t$ [s] & $127.15$ & $11.49$ & $45.10$ & $4.86$\\
           & l.b. & $2.63\times10^{-2}$ & $2.63\times10^{-2}$ & $^+2.62\times10^{-2}$ & $2.63\times10^{-2}$\\
           & $n$ & $1364$ & $2066$ & $1034$ & $1736$ \\
           & size & $12\times12, 1\times78, 1\times360$ & $12\times12, 8\times48, 1\times78, 1\times84$ & $12\times12, 1\times23, 1\times360$ & $12\times12, 1\times23, 8\times48, 1\times84$ \\
        \hline
        11 & $t$ [s] & $93.78$ & $9.26$ & $36.06$ & $6.19$\\
           & l.b. & $2.46\times10^{-2}$ & $2.46\times10^{-2}$ & $^+2.44\times10^{-2}$ & $2.46\times10^{-2}$\\
           & $n$ & $1819$ & $2729$ & $1324$ & $2234$ \\
           & size & $13\times13, 1\times91, 1\times429$ & $13\times13, 9\times52, 2\times91$ & $13\times13, 1\times25, 1\times429$ & $13\times13, 1\times25, 9\times52, 1\times91$ \\
        \hline
        12 & $t$ [s] & $145.09$ & $16.42$ & $90.36$ & $16.19$\\
           & l.b. & $2.31\times10^{-2}$ & $2.32\times10^{-2}$ & $^+2.30\times10^{-2}$ & $2.32\times10^{-2}$\\
           & $n$ & $2379$ & $3534$ & $1664$ & $2819$ \\
           & size & $14\times14, 1\times105, 1\times504$ & $14\times14, 10\times56, 1\times98, 1\times105$ & $14\times14, 1\times27, 1\times504$ & $14\times14, 1\times27, 10\times56, 1\times98$ \\
        \hline
        13 & $t$ [s] & $161.39$ & $66.75$ & $125.50$ & $18.83$\\
           & l.b. & $^+2.18\times10^{-2}$ & $2.18\times10^{-2}$ & $^+2.16\times10^{-2}$ & $2.19\times10^{-2}$\\
           & $n$ & $3059$ & $4499$ & $2058$ & $3498$ \\
           & size & $15\times15, 1\times120, 1\times585$ & $15\times15, 11\times60, 1\times105, 1\times120$ & $15\times15, 1\times29, 1\times585$ & $15\times15, 1\times29, 11\times60, 1\times105$ \\
        \hline
        14 & $t$ [s] & $373.55$ & $54.08$ & $140.63$ & $27.66$\\
           & l.b. & $^+2.06\times10^{-2}$ & $2.06\times10^{-2}$ & $^+2.03\times10^{-2}$ & $2.06\times10^{-2}$\\
           & $n$ & $3875$ & $5643$ & $2510$ & $4278$ \\
           & size & $16\times16, 1\times136, 1\times672$ & $16\times16, 12\times64, 1\times112, 1\times136$ & $16\times16, 1\times31, 1\times672$ & $16\times16, 1\times31, 12\times64, 1\times112$ \\
        \hline
        15 & $t$ [s] & $396.57$ & $96.38$ & $159.30$ & $33.31$\\
           & l.b. & $^+1.95\times10^{-2}$ & $1.95\times10^{-2}$ & $^+1.87\times10^{-2}$ & $1.95\times10^{-2}$\\
           & $n$ & $4844$ & $6986$ & $3024$ & $5166$ \\
           & size & $17\times17, 1\times153, 1\times765$ & $17\times17, 13\times68, 1\times119, 1\times153$ & $17\times17, 1\times33, 1\times765$ & $17\times17, 1\times33, 13\times68, 1\times119$ \\
              \bottomrule  
    \end{tabular}}
    \caption{Comparison of four moment-SOS hierarchies for the second-order relaxation of a double-hinged beam problem with varying number of elements ($n_\mathrm{e}$). The hierarchies are: standard moment-SOS (mSOS), mSOS with arrow decomposition (mSOS+AD), mSOS using nonmixed-term basis (mSOS+NMT), and mSOS combining nonmixed-term basis with arrow decomposition (mSOS+NMT+AD). For each method and problem size, we report: solution time ($t$) in seconds, computed lower bound (l.b.), number of variables in the relaxation ($n$), and sizes of matrix constraints (where $a\times b$ indicates $a$ matrix constraints of size $\mathbb{S}^b$). The superscript $^*$ indicates solutions with verified global optimality through relative optimality gap, while $^+$ denotes numerical issues in the Mosek optimizer.}
    \label{tab:sw2}
\end{table}

\begin{table}[!htbp]
    \centering
    \scriptsize
    \setlength{\tabcolsep}{1pt}
    \renewcommand{\arraystretch}{1.0}
     \resizebox{\textwidth}{!}{%
    \begin{tabular}{c|c|c|c|c|c}
         \toprule
        $n_e$ & & mSOS & mSOS+AD & mSOS+NMT & mSOS+NMT+AD\\
        \hline
         4 & $t$ [s] & $45.76$ & $4.01$ & $1.58$ & $0.41$\\
           & l.b. & $^*3.86\times10^{-2}$ & $^*3.86\times10^{-2}$ & $^*3.86\times10^{-2}$ & $^*3.86 \times 10^{-2}$\\
           & $n$ & $461$ & $1154$ & $240$ & $438$ \\
           & size & $6\times21, 1\times56, 1\times252$ & $6\times21, 1\times56, 2\times84, 1\times147$ & $6\times11, 1\times16, 1\times132$ & $6\times11, 1\times16, 2\times44, 1\times77$ \\
        \hline
         5 & $t$ [s] & $173.21$ & $7.85$ & $24.86$ & $0.94$\\
           & l.b. & $^*3.84\times10^{-2}$ & $^*3.84\times10^{-2}$ & $^*3.84\times10^{-2}$ & $^*3.84\times 10^{-2}$\\
           & $n$ & $923$ & $2547$ & $391$ & $755$ \\
           & size & $7\times28, 1\times84, 1\times420$ & $7\times28, 1\times84, 3\times112, 1\times196$ & $7\times13, 1\times19, 1\times195$ & $7\times13, 1\times19, 3\times52, 1\times91$ \\
        \hline
         6 & $t$ [s] & $237.84$ & $27.08$ & $49.18$ & $1.73$\\
           & l.b. & $^*3.82\times10^{-2}$ & $^*3.82\times10^{-2}$ & $^*3.82\times10^{-2}$ & $^*3.82\times10^{-2}$\\
           & $n$ & $1715$ & $5045$ & $595$ & $1195$ \\
           & size & $8\times36, 1\times120, 1\times648$ & $8\times36, 1\times120, 4\times144, 1\times252$ & $8\times15, 1\times22, 1\times270$ & $8\times15, 1\times22, 4\times60, 1\times105$ \\
        \hline
         7 & $t$ [s] & $456.53$ & $85.93$ & $129.79$ & $4.67$\\
           & l.b. & $^*3.81\times10^{-2}$ & $^*3.81\times10^{-2}$ & $^*3.81\times10^{-2}$ & $^*3.81\times10^{-2}$\\
           & $n$ & $3002$ & $9212$ & $860$ & $1778$ \\
           & size & $9\times45, 1\times165, 1\times945$ & $9\times45, 1\times165, 5\times180, 1\times315$ & $9\times17, 1\times25, 1\times357$ & $9\times17, 1\times25, 5\times68, 1\times119$ \\
        \hline
         8 & $t$ [s] & $2566.01$ & $350.88$ & $49.97$ & $20.99$\\
           & l.b. & $3.80 \times10^{-2}$ & $^*3.80\times10^{-2}$ & $3.79\times19^{-2}$ & $3.80\times10^{-2}$\\
           & $n$ & $5004$ & $15784$ & $1194$ & $2524$ \\
           & size & $10\times55, 1\times220, 1\times1320$ & $10\times55, 7\times220, 1\times385$ & $10\times19, 1\times28, 1\times456$ & $10\times19, 1\times28, 6\times76, 1\times133$ \\
        \hline
         9 & $t$ [s] & $3816.29$ & $9275.24$ & $68.91$ & $89.30$\\
           & l.b. & $3.79\times10^{-2}$ & $^*3.79\times10^{-2}$ & $3.77\times10^{-2}$ & $3.78\times10^{-2}$\\
           & $n$ & $8007$ & $25695$ & $1605$ & $3453$ \\
           & size & $11\times66, 1\times286, 1\times1782$ & $11\times66, 7\times264, 1\times286, 1\times462$ & $11\times21, 1\times31, 1\times567$ & $11\times21, 1\times31, 7\times84, 1\times147$ \\
        \hline
        10 & $t$ [s] & $9166.91$ & & $109.95$ & $164.84$\\
           & l.b. & $3.78\times10^{-2}$ & $-$& $3.74\times10^{-2}$ & $3.76\times10^{-2}$\\
           & $n$ & $12375$ & & $2101$ & $4585$ \\
           & size & $12\times78, 1\times364, 1\times2340$ & & $12\times23, 1\times34, 1\times690$ & $12\times23, 1\times34, 8\times92, 1\times161$ \\
        \hline
        11 & $t$ [s] & & & $167.05$ & $200.88$\\
           & l.b. & $-$&$-$ & $3.72\times10^{-2}$ & $3.74\times10^{-2}$\\
           & $n$ &  &  & $2690$ & $5940$ \\
           & size &  &  & $13\times25, 1\times37, 1\times825$ & $13\times25, 1\times37, 9\times100, 1\times175$ \\
        \hline
        12 & $t$ [s] & & & $402.48$ & $225.97$\\
           & l.b. & $-$&$-$ & $^+3.68\times10^{-2}$ & $3.72\times10^{-2}$\\
           & $n$ &  &  & $3380$ & $7538$ \\
           & size &  &  & $14\times27, 1\times40, 1\times972$ & $14\times27, 1\times40, 10\times108, 1\times189$ \\
        \hline
        13 & $t$ [s] & & & $303.97$ & $251.30$\\
           & l.b. &$-$ & $-$& $^+3.61\times10^{-2}$ & $3.69\times10^{-2}$\\
           & $n$ &  &  & $4179$ & $9399$ \\
           & size &  &  & $15\times29, 1\times43, 1\times1131$ & $15\times29, 1\times43, 11\times116, 1\times203$ \\
        \hline
        14 & $t$ [s] & & & $578.86$ & $328.50$\\
           & l.b. &$-$ &$-$ & $^+3.56\times10^{-2}$ & $3.66\times10^{-2}$\\
           & $n$ &  &  & $5095$ & $11543$ \\
           & size &  &  & $16\times31, 1\times46, 1\times1302$ & $16\times31, 1\times46, 12\times124, 1\times217$ \\
        \hline
        15 & $t$ [s] & & & $719.89$ & $532.08$\\
           & l.b. & $-$&$-$ & $^+3.46\times10^{-2}$ & $3.61\times10^{-2}$\\
           & $n$ &  &  & $6136$ & $13990$ \\
           & size &  &  & $17\times33, 1\times49, 1\times1485$ & $17\times33, 1\times49, 13\times132, 1\times231$ \\
              \bottomrule  
    \end{tabular}}
    \caption{Comparison of four moment-SOS hierarchies for the third-order relaxation of a double-hinged beam problem with varying number of elements ($n_\mathrm{e}$). The hierarchies are: standard moment-SOS (mSOS), mSOS with arrow decomposition (mSOS+AD), mSOS using nonmixed-term basis (mSOS+NMT), and mSOS combining nonmixed-term basis with arrow decomposition (mSOS+NMT+AD). For each method and problem size, we report: solution time ($t$) in seconds, computed lower bound (l.b.), number of variables in the relaxation ($n$), and sizes of matrix constraints (where $a\times b$ indicates $a$ matrix constraints of size $\mathbb{S}^b$). The superscript $^*$ indicates solutions with verified global optimality through relative optimality gap, while $^+$ denotes numerical issues in the Mosek optimizer, and $-$ indicates solver failure due to time limitations.}
    \label{tab:sw3}
\end{table}

\begin{table}[!htbp]
    \centering
    \scriptsize
    \setlength{\tabcolsep}{1pt}
    \renewcommand{\arraystretch}{1.0}
     \resizebox{\textwidth}{!}{%
    \begin{tabular}{c|c|c|c|c|c}
         \toprule
        $n_e$ & & mSOS & mSOS+AD & mSOS+NMT & mSOS+NMT+AD\\
        \hline
         8 & $t$ [s] & $200931.39$ & & $88.55$ & $36.43$\\
           & l.b. & $3.80\times10^{-2}$ & $-$ & $^+3.80\times10^{-2}$ & $^*3.80\times10^{-2}$\\
           & $n$ & $24309$ &  & $3192$ & $6034$ \\
           & size & $10\times220, 1\times715, 1\times5280$ &  & $10\times28, 1\times37, 1\times672$ & $10\times28, 1\times37, 6\times112, 1\times196$ \\
        \hline
         9 & $t$ [s] & & & $142.16$ & $901.23$\\
           & l.b. & $-$& $-$& $^+3.79\times10^{-2}$ & $^*3.79\times10^{-2}$\\
           & $n$ &  &  & $4370$ & $8338$ \\
           & size &  &  & $11\times31, 1\times41, 1\times837$ & $11\times31, 1\times41, 7\times124, 1\times217$ \\
        \hline
        10 & $t$ [s] & & & $685.56$ & $574.21$\\
           & l.b. & $-$&$-$ & $3.79\times10^{-2}$ & $^*3.79\times10^{-2}$\\
           & $n$ &  &  & $5808$ & $11163$ \\
           & size &  &  & $12\times34, 1\times45, 1\times1020$ & $12\times34, 1\times45, 8\times136, 1\times238$ \\
        \hline
        11 & $t$ [s] & & & $936.62$ & $493.89$\\
           & l.b. &$-$ &$-$ & $3.78\times10^{-2}$ & $3.79\times10^{-2}$\\
           & $n$ &  &  & $7532$ & $14562$ \\
           & size &  &  & $13\times37, 1\times49, 1\times1221$ & $13\times37, 1\times49, 9\times148, 1\times259$ \\
        \hline
        12 & $t$ [s] & & & $1897.92$ & $1004.73$\\
           & l.b. & $-$& $-$& $3.77\times10^{-2}$ & $3.78\times10^{-2}$\\
           & $n$ &  &  & $9568$ & $18588$ \\
           & size &  &  & $14\times40, 1\times53, 1\times1440$ & $14\times40, 1\times53, 10\times160, 1\times280$ \\
        \hline
        13 & $t$ [s] & & & $1377.81$ & $1869.33$ \\
           & l.b. & $-$& $-$ & $3.76\times10^{-2}$ & $3.78\times10^{-2}$\\
           & $n$ &  &  & $11942$ & $23294$ \\
           & size &  &  & $15\times43, 1\times57, 1\times1677$ & $15\times43, 1\times57, 11\times172, 1\times301$ \\
        \hline
        14 & $t$ [s] & & & $2785.38$ & $4305.19$\\
           & l.b. &$-$ & $-$& $3.76\times10^{-2}$ & $3.77\times10^{-2}$\\
           & $n$ &  &  & $14680$ & $28733$ \\
           & size &  &  & $16\times46, 1\times61, 1\times1932$ & $16\times46, 1\times61, 12\times184, 1\times322$ \\
        \hline
        15 & $t$ [s] & & & $6667.30$ & $6879.09$\\
           & l.b. & $-$& $-$& $3.74\times10^{-2}$ & $3.76\times10^{-2}$\\
           & $n$ &  &  & $17808$ & $34958$ \\
           & size &  &  & $17\times49, 1\times65, 1\times2205$ & $17\times49, 1\times65, 13\times196, 1\times343$ \\
              \bottomrule  
    \end{tabular}}
    \caption{Comparison of four moment-SOS hierarchies for the fourth-order relaxation of a double-hinged beam problem with varying number of elements ($n_\mathrm{e}$). The hierarchies are: standard moment-SOS (mSOS), mSOS with arrow decomposition (mSOS+AD), mSOS using nonmixed-term basis (mSOS+NMT), and mSOS combining nonmixed-term basis with arrow decomposition (mSOS+NMT+AD). For each method and problem size, we report: solution time ($t$) in seconds, computed lower bound (l.b.), number of variables in the relaxation ($n$), and sizes of matrix constraints (where $a\times b$ indicates $a$ matrix constraints of size $\mathbb{S}^b$). The superscript $^*$ indicates solutions with verified global optimality through relative optimality gap, while $^+$ denotes numerical issues in the Mosek optimizer, and $-$ indicates solver failure due to time limitations.}
    \label{tab:sw4}
\end{table}

\newpage
\paragraph{Acknowledgements}
Marouan Handa, Marek Tyburec and Michal Ko\v{c}vara acknowledge the financial support of the Czech Science Foundation project GA22-15524S, and Marek Tyburec and Michal Ko\v{c}vara also benefited from the support of the project ROBOPROX (reg. no. CZ.02.01.01/00/22\_008/0004590) that was co-funded by the European Union. In addition, Marek Tyburec and Giovanni Fantuzzi acknowledge the support of the mobility project 8J24DE005 awarded jointly by Ministry of Education, Youth and Sport of the Czech Republic and German Academic Exchange Service. 
Victor Magron benefited from the HORIZON–MSCA-2023-DN-JD of the European Commission under the Grant Agreement No 101120296 (TENORS), the ANITI AI Cluster program under the Grant agreement n${}^\circ$ ANR-23-IACL-0002, as well as the National Research Foundation, Prime Minister's Office, Singapore under its Campus for Research Excellence and Technological Enterprise (CREATE) programme.\\ \\

\noindent\textbf{Availability of data and materials} Source codes and input files to reproduce the computations are available at https://gitlab.com/tyburec/pof-dyna.
\section*{Declarations}
\textbf{Ethics approval and consent to participate} This work does not contain any studies with human participants or animals performed by any of the authors.\\ \\
\textbf{Consent for publication} Not applicable. \\ \\
\textbf{Competing interests} The authors declare that they have no competing interests.


\end{document}